\colorlet{siaminlinkcolor}{green!50!black}
\colorlet{siamexlinkcolor}{red!50!black}
\journal{}
\newcommand{\R}{\mathbb{R}}
\newcommand{\C}{\mathbb{C}}
\newcommand{\vx}{\mathbf{x}}
\newcommand{\dx}{\mathrm{d}\vx}
\newcommand{\rmd}{\mathrm{d}}
\newcommand{\vep}{\varepsilon}
\newcommand{\lrang}[2]{\langle #1, #2 \rangle}
\newcommand{\hly}[1]{\sethlcolor{yellow}\hl{#1}}
\DeclareMathOperator*{\argmin}{argmin}
\DeclareMathOperator{\sech}{sech}
\newcommand{\thickhline}{%
	\noalign {\ifnum 0=`}\fi \hrule height 1.2pt
	\futurelet \reserved@a \@xhline
}
\newcolumntype{"}{!{\vrule width 1.2pt}}
\theoremstyle{plain}
\newtheorem{theorem}{Theorem}[section]
\newtheorem{lemma}[theorem]{Lemma}
\newtheorem{proposition}[theorem]{Proposition}
\theoremstyle{definition}
\newtheorem{definition}[theorem]{Definition}
\theoremstyle{remark}
\newtheorem{remark}[theorem]{Remark}
\newtheorem{conj}[theorem]{Conjecture}
\newtheorem{exmp}[theorem]{Example}
\numberwithin{equation}{section}
\numberwithin{figure}{section}
\numberwithin{table}{section}
\crefname{exmp}{Example}{Examples}
\crefname{hypothesis}{Hypothesis}{Hypotheses}
\crefname{conj}{Conjecture}{Conjectures}
\begin{document}
	\hypersetup{
		allcolors = siaminlinkcolor,
		urlcolor = siamexlinkcolor,
	}
	\begin{frontmatter}
		
		\title{Computing the least action ground state of the nonlinear Schr\"odinger equation by a normalized gradient flow}
		
		\author[1]{Chushan Wang\corref{cor1}}
		\cortext[cor1]{Tel.: }
		\ead{E0546091@u.nus.edu}
		
		\address[1]{Department of Mathematics, National University of Singapore, Singapore 119076, Singapore}
		
		\begin{abstract}
			In this paper, we generalize the normalized gradient flow method which was first applied to computing the least energy ground state to compute the least action ground state. A continuous normalized gradient flow (CNGF) will be presented and the action diminishing property will be proved to provide a mathematical justification of the gradient flow with discrete normalization (GFDN). Then we use backward-forward Euler method to further discretize the GFDN in time which leads to the GFDN-BF scheme. It is shown that the GFDN-BF scheme preserves the positivity and diminishes the action unconditionally. We compare it with other three schemes which are modified from corresponding ones designed for the least energy ground state and the numerical results show that the GFDN-BF scheme performs much better than the others in accuracy, efficiency and robustness for large time steps. Extensive numerical results of least action ground states for several types of potentials are provided. We also use our numerical results to verify some existing results and lead to some conjectures. 
		\end{abstract}
		
		\begin{keyword}
			nonlinear Schr\"odinger equation, least action ground state, least energy ground state, normalized gradient flow, orbital stability 
		\end{keyword}
		
	\end{frontmatter}
	
	
	\section{Introduction}
	We consider the nonlinear Schr\"odinger equation (NLSE) with focusing nonlinearity
	\begin{equation}\label{NLSE}
		i \partial_t \psi = - \Delta \psi + V(\vx)\psi - \beta |\psi|^{2\alpha} \psi, \quad \vx \in \R^d, \quad t > 0, 
	\end{equation}
	where $ \psi = \psi(\vx, t) $ is a complex-valued function, $ V(\vx) $ is a real-valued potential and
	\begin{equation*}
		\beta > 0, \quad 0<\alpha<\frac{2}{(d-2)_{+}} \text{ with } (d-2)_+ = \max\{0, d-2\}. 
	\end{equation*}
	Without loss of generality, we fix $ \beta = 1 $. The assumptions on the potential $ V $ will be made clear in \cref{sec:2} with some examples shown in \cref{exmp:poten}. For the nonlinearity, $ \alpha = 2/d $ is called $ L^2 $-critical since the scaling $ \psi(\vx, t) \rightarrow \lambda^\frac{1}{\alpha} \psi(\lambda \vx, \lambda^2 t) $ preserves the $ L^2 $ norm while $ 0<\alpha<2/d $ and $ \alpha>2/d $ are called $ L^2 $-subcritical and $ L^2 $-supercritical respectively. Although we only consider the homogeneous power-type nonlinearity, it is possible to generalize our method to more complex nonlinearities (see \cref{rem:generalization}).
	
	The NLSE \cref{NLSE} with our assumptions on the potential function includes many important models. When $ V(\vx) \equiv 0 $, it arises in various physical contexts such as nonlinear optics and plasma physics \cite{coz2009}. When $ V $ is chosen as Dirac's delta function, it can be used to model a Bose-Einstein Condensation (BEC) in the presence of point defects or impurities \cite{adami2021,fukuizumi2008}. Besides, recently, it has attracted a lot of attention when $ V $ is an inverse power potential (see \cite{dinh2020, fukaya2021, fukuizumi2003} and references therein). 
	
	Formally, the NLSE \cref{NLSE} conserves the mass
	\begin{equation}
		M(\psi(\cdot, t)) = \int_{\R^d} |\psi(\vx, t)|^2 \dx \equiv M(\psi(\cdot, 0)), \quad t \geq 0, 
	\end{equation}
	and the energy
	\begin{equation}
		\begin{aligned}
			E(\psi(\cdot, t)) 
			&= \int_{\R^d} \left[ |\nabla \psi(\vx, t)|^2 + V(\vx) |\psi(\vx, t)|^2 - \frac{1}{\alpha+1} |\psi(\vx, t)|^{2\alpha+2} \right] \dx \equiv E(\psi(\cdot, 0)), \quad t \geq 0.
		\end{aligned} 
	\end{equation}
	
	In some applications, people are interested in standing wave solutions of the NLSE \cref{NLSE}, which take the form
	\begin{equation}
		\psi(\vx, t) = e^{i \omega t} \phi(\vx), \quad \vx \in \R^d, 
	\end{equation}
	where $ \omega \in \R $ and $ \phi $ is a real/complex-valued function independent of time. Inserting it into \cref{NLSE}, we see that $ \phi $ satisfies the stationary nonlinear Schr\"odinger equation
	\begin{equation}\label{SNLS}
		-\Delta \phi + V \phi + \omega \phi - |\phi|^{2\alpha} \phi  = 0. 
	\end{equation}
	There are two classes of solutions to \cref{SNLS} that are of particular interest. One is the least energy ground state and the other is the least action ground state. We shall briefly review each of them. 
	
	When $ 0 < \alpha \leq 2/d $ (i.e. the $ L^2 $-critical and -subcritical regime), the \emph{least energy ground state} $ \phi^{E}_m $ with mass $ m \in \R^+ $ is defined as the $ L^2 $ constrained energy minimizer:
	\begin{equation}\label{prob:energy ground state}
		\phi_m^{E} := \argmin_{\substack{\phi \in H^1(\R^d) \\ M(\phi) = m}} E(\phi). 
	\end{equation}
	Then \cref{prob:energy ground state} is a nonconvex minimization problem, and the associated Euler-Lagrangian equation reads	
	\begin{equation}\label{eq:evp}
		-\Delta \phi + V \phi - |\phi|^{2\alpha} \phi  = \mu^g \phi, 
	\end{equation}
	which is a nonlinear eigenvalue problem for $ (\mu^g, \phi) $ under the constraint $ M(\phi) = m $. The eigenvalue $ \mu^g $ can be computed from its corresponding eigenfunction $ \phi $ by
	\begin{equation}\label{mu}
		\mu^g = \mu^g(\phi) = \frac{\int_{\R^d} \left[ |\nabla \phi(\vx)|^2 + V(\vx) |\phi(\vx)|^2 - |\phi(\vx)|^{2\alpha+2} \right] \dx}{m}. 
	\end{equation}
	Then we see that any solution to the least energy ground state \cref{prob:energy ground state} with corresponding eigenvalue $ \mu^g $ will solve \cref{SNLS} with $ \omega = -\mu^g $. 
	
	We recall some results about the existence of the least energy ground state. It is well-known that when $ V(\vx) \equiv 0 $, the least energy ground state exists for any $ m > 0 $ when $ 0<\alpha<2/d $ \cite{cazenave_lions}. More results under very general conditions on the nonlinear term can be found in \cite{jeanjean2021}. When $ V(\vx) \not \equiv 0 $, the existence of least energy ground state is discussed when $ 0 \not \equiv V(\vx) \leq 0 $ is continuous and satisfies $ \lim_{|\vx| \rightarrow \infty} V(\vx) = 0 $ \cite{ikoma2020}. When $ V = - \gamma/|x|^\sigma \ (\gamma>0, 0<\sigma<\min\{2, d\}) $ is an attractive inverse power potential, it is known that the least energy ground state exists when $ 0<\alpha<2/d $ for any $ m > 0 $ and when $ \alpha = 2/d $ for any $ 0<m<m_{\text{c}} $ with $ m_c $ being some critical value \cite{li2020}. In \cite{bao2013}, the existence and non-existence of the least energy ground state are shown for potentials satsifying $ V \geq 0 $ and $ \lim_{|\vx|\rightarrow \infty} V(\vx) = \infty $. 
	
	The gradient flow with discrete normalization (GFDN) introduced in \cite{bao2004} (also known as the imaginary time evolution method in physical literature) is one of the most popular techniques for computing the least energy ground state. We refer the readers to \cite{bao2004, bao2006, ionut2010, bao2013, xavier2014, bao2015, bao2018, bao2019, cai2021, liu2021} and references therein for more detailed discussion about the normalized gradient flow approach and to \cite{ionut2017, wen2017, xavier2017, xavier2018} for some optimization methods. Some recent results of convergence of the GFDN can be found in \cite{faou2018, henning2020}. 
	
	With the focusing pure power nonlinearity, there is no least energy ground state when $ \alpha > 2/d $, i.e. the $ L^2 $-supercritical regime and when $ \alpha = 2/d $, the least energy ground state usually does not exist for all $ m>0 $. However, the applications to plasma physics sometimes involve supercritical solitary waves and the least action ground state which will be introduced later is considered. In this paper, we will focus on computing the least action ground state. In this case, $ \omega $ is always given and \cref{SNLS} is a semilinear elliptic equation. 
	
	We define the Schr\"odinger operator $ H_\omega: H^1(\R^d) \rightarrow H^{-1}(\R^d) $ as
	\begin{equation}
		H_\omega \phi = -\Delta \phi + V \phi + \omega \phi. 
	\end{equation}
	We denote by $ H_0 $ the operator $ -\Delta + V $. Let $ \omega_0 $ be defined as
	\begin{equation}\label{def:omega_0}
		\omega_0 := -\inf_{\substack{\phi \in H^1(\R^d) \\ \| \phi \|_{L^2}= 1 }} \int_{\R^d} \left[|\nabla \phi(\vx)|^2 + V(\vx)|\phi(\vx)|^2\right] \dx.  
	\end{equation}
	In most of the cases especially those are physically meaningful, the infimum above will be non-positive. Hence, we will assume that $ \omega_0 \geq 0 $ in this paper. However, even if $ \omega_0 < 0 $, our methods are also valid with slight modification as shown in \cref{rem:modify}. If we further have $ \omega_0>0 $, then $ - \omega_0 $ is the smallest eigenvalue of $ H_0 $ and we denote by $ \phi_0^{\text{lin}} $ the corresponding nonnegative eigenfunction with $ L^2 $ norm equal to 1 (also called normalized linear ground state), which is strictly positive and unique up to a constant phase if $ V $ is locally integrable and bounded from above \cite{lieb2001, zhou2017}. Later, we will always assume that $ \omega > \omega_0 \geq 0 $. 
	
	Then we define the \emph{action functional} $ S_\omega: H^1(\R^d) \rightarrow \R $ as
	\begin{equation}
		S_\omega(\phi) = E(\phi) + \omega M(\phi) = \int_{\R^d} \left[|\nabla \phi(\vx)|^2 + V(\vx) |\phi(\vx)|^2 + \omega |\phi(\vx)|^2 - \frac{1}{\alpha+1} |\phi(\vx)|^{2\alpha+2} \right] \rmd x.  
	\end{equation}
	We note that $ \phi \in H^1(\R^d) $ solves \cref{SNLS} if and only if $ \phi $ is a critical point of $ S_\omega $. The \emph{least action ground state} is defined as the minimizer of $ S_\omega $ among all nontrivial solutions of \cref{SNLS} (or nonzero critical point of $ S_\omega $). Let $ \mathcal{F}_\omega $ be the set of all nontrivial solutions to \cref{SNLS}, that is
	\begin{equation}\label{F_omega}
		\mathcal{F}_\omega = \{0 \neq \phi \in H^1(\R^d) : -\Delta \phi + V \phi + \omega \phi - |\phi|^{2\alpha} \phi = 0 \}. 
	\end{equation}
	Then the {least action ground state} $ \phi_\omega^\text{S} $ is given by 
	\begin{equation}\label{action ground state}
		\phi_\omega^\text{S} = \argmin_{ \phi \in \mathcal{F}_\omega} S_\omega(\phi). 
	\end{equation}
	We denote the action of the least action ground state associated with $ \omega $ by $ S_g(\omega) $, mass by $ M_g(\omega) $ and energy by $ E_g(\omega) $, i.e., ($ M_g $ and $ E_g $ may not be well-defined if the uniqueness of the least action ground states is not known)
	\begin{equation}\label{S_g}
		S_g(\omega) = S_\omega(\phi^S_\omega), \quad M_g(\omega) = M(\phi^S_\omega), \quad E_g(\omega) = E(\phi^S_\omega). 
	\end{equation} 
	By the method of the Nehari manifold, one can establish a variational characterization of the least action ground state. We define the \emph{Nehari manifold} $ \mathcal{N}_\omega $ as
	\begin{equation}
		\mathcal{N}_\omega = \{0 \neq \phi \in H^1(\R^d): I_\omega(\phi) = 0\}, 
	\end{equation}
	where $ I_\omega $ is the \emph{Nehari functional} defined as
	\begin{equation}
		I_\omega(\phi) = \int_{\R^d} \left[ |\nabla \phi(\vx)|^2 + V(\vx) |\phi(\vx)|^2 + \omega |\phi(\vx)|^2 - |\phi(\vx)|^{2\alpha+2} \right] \dx. 
	\end{equation}
	By the method in \cite{fukaya2019}, one can show that under our assumptions \ref{rq1}, \ref{rq2} and \ref{rq3} or \ref{rq3'} made in \cref{sec:2}, the least action ground state $ \phi^\text{S}_\omega $ of \cref{SNLS} exists, which is also a minimizer of $ S_\omega $ on the Nehari manifold $ \mathcal{N}_\omega $, i.e, 
	\begin{equation}\label{minimization}
		\phi^\text{S}_\omega = \argmin_{\phi \in \mathcal{N}_\omega} S_\omega(\phi). 
	\end{equation}
	For simplicity of notation, we will use $ \phi_\omega $ instead of  $ \phi^\text{S}_\omega $ to denote the least action ground state. 
	
	Then we review some results about the least action ground state of \cref{SNLS}. When $ V(\vx) \equiv 0 $, \cref{SNLS} is the scalar field equation and it has been proved that when $ \omega>0 $ there exists a unique positive, radial and exponentially decreasing least action ground state and all the other least action ground states can be obtained by phase shift and/or translation of it \cite{coz2009}. Besides, the solution to this equation is closely related to the sharp constants in Gagliardo-Nirenberg inequality and the global well-posedness of some nonlinear Schr\"odinger equations \cite{weinstein1982}. Similar results (except translation invariance) hold when $ V = - \gamma/|x|^\sigma \ (\gamma>0, 0<\sigma<\min\{2, d\}) $ is the attractive inverse power potentials (see \cite{fukaya2021} for the proof and the sufficient conditions for uniqueness with more general potentials). When $ V = -Z \delta \  (Z>0) $ is the attractive delta potential in 1D, the unique positive ground state can be explicitly given by
	\begin{equation}\label{delta_exact}
		\phi_\omega(x) = \left\{ (\alpha+1) \omega \sech^2\left[ \alpha\sqrt{\omega} |x| + \tanh^{-1}\left(\frac{Z}{2\sqrt{\omega}}\right) \right] \right\}^{\frac{1}{2\alpha}}, \quad \omega>\frac{Z^2}{4}. 
	\end{equation}
	Moreover, when we choose $ Z=0 $ in \cref{delta_exact}, it gives the least action ground states for $ V(\vx) \equiv 0 $. The exponential decay of the least action ground state allows us to truncate the whole space problem onto a large enough bounded domain in computation, as what people usually do when computing the least energy ground state. 
	
	We remark that there is a long standing problem about the relationship between the least energy ground state and the least action ground state: whether a least energy ground state is a least action ground state and the converse. We refer the readers to \cref{sec:action_vs_energy} and \cite{carles2021,dovetta2021,jeanjean2021,coz2009} for more details. Roughly speaking, with the homogeneous pure power nonlinearity we are considering, the least energy ground state will also be a least action ground state with $ \omega = - \mu^g $ defined in \cref{mu}, and all the least action ground states of this $ \omega $ will have the same mass and are least energy ground states. Though the original proof of this result in \cite{dovetta2021} consider the non-potential case on general domain, we found that it is still valid in our setting with the variational characterization by means of Nehari manifold \cref{minimization}. 
	
	People are also concerned about the orbital stability and instability (see \cref{def:stability}) of some certain class of standing wave solutions $ e^{i \omega t} \phi_\omega(\vx) $ of \cref{NLSE}. In general, it is difficult to consider all the standing waves and particular efforts are paid to least energy ground states and least action ground states. To prove stability or instability, the following criterion due to Grillakis, Shatah and Strauss \cite{GSS1987} is very powerful: 
	\begin{enumerate}[label = (C\arabic*)]
		\item \label{cri1} if $ \left. \frac{d}{d\omega} M(\phi_\omega) \right \vert_{\omega = \omega_0} > 0 $, then the standing wave solution $ e^{i \omega_0 t} \phi_{\omega_0}(\vx) $ is stable; 
		\item \label{cri2} if $ \left. \frac{d}{d\omega} M(\phi_\omega) \right \vert_{\omega = \omega_0} < 0 $, then the standing wave solution $ e^{i \omega_0 t} \phi_{\omega_0}(\vx) $ is unstable. 
	\end{enumerate}
	In order to use such criterion, one has to prove some spectral properties of the linearized operator (second variation of $ S_\omega $), which is proved for some potentials with radial symmetry such as zero potential, delta potentials \cite{fukuizumi2008} and inverse power potentials \cite{fukaya2021}. For the scalar field equation (i.e. $ V(\vx) \equiv 0 $), it is well understood that the standing wave solution is stable for any $ \omega > 0 $ when $ \alpha<2/d $ and unstable for any $ \omega>0 $ when $ \alpha \geq 2/d $ (see also \cite{cazenave2003, cazenave_lions} for the proof without using the above criterion). In \cite{fukuizumi2003}, it is shown that under suitable assumptions on $ V $, the standing wave is stable for $ 0<\alpha<2/d $ and sufficiently large $ \omega $, or for $ 0<\alpha<2/(d-2)_+ $ and $ \omega $ close to $ \omega_0 $. Later, in \cite{fukuizumi_unstable}, the instability of standing waves when $ \alpha > 2/d $ and $ \omega $ is sufficiently large is shown. In \cite{fukuizumi2008}, the exact stability and instability regime for 1D attractive delta potentials is found. However, to our best knowledge, there is no complete result when $ V $ is the attractive inverse power potential. Some partial results can be found in \cite{fukaya2019,fukaya2021,li2020}. In \cref{sec:stability}, by the above criterion and our numerical results, we will make a conjecture about the stability and instability of standing waves with this kind of potentials. 
		
	After the completion of the present work, we became aware of the paper \cite{liu2022}, where the authors considered the computation of least action ground states for the NLSE with/without rotation under a confining potential and proposed a discrete normalized gradient flow with asymptotic Lagrange multiplier for a simplified variational characterization. 
	
	The aim of this paper is to propose a gradient flow with discrete normalization (GFDN) to compute the least action ground state. We also present a continuous normalized gradient flow (CNGF) and prove its action diminishing property, which gives a mathematical justification of the GFDN. We compare different semi-discretization schemes for GFDN and find that the best one in all the cases is the GFDN-BF scheme. As the application of the GFDN-BF scheme, extensive numerical examples of least action ground states with different potentials are shown. Also, we verify some existing results about the least action ground state numerically and lead to some conjectures. 
	
	The rest of the paper is organized as follows. In \cref{sec:2}, we state the assumptions on the potential and give some examples. Besides, we show some preliminary results about the the Nehari manifold and some asymptotic results about the least action ground states. In \cref{sec:3}, we introduce the GFDN and show that GFDN can be viewed as a discretization of CNGF which preserves $ I_\omega $ and diminishes the action. In \cref{sec:numerical methods}, we further discretize the GFDN in time and introduce the GFDN-BF scheme. Spatial discretization for different types of potentials are also discussed. In \cref{sec:numerical results}, we compare different semi-discretization schemes for the GFDN and show various numerical results. Also, we apply our scheme to verify some existing results and lead to some conjectures. 
	
	Throughout the paper, we use $ H^k(\Omega) $ and $ \| \cdot \|_{H^k(\Omega)} $ to denote the standard Sobolev spaces and their norms, respectively, where $ \Omega = \R^d $ or $ \Omega \subset \R^d $ is a bounded domain. In particular, the norm and inner product of $ L^2(\Omega) = H^0(\Omega) $ is denoted by $ \| \cdot \|_{L^2(\Omega)} $ and $ \lrang{\cdot}{\cdot} $, respectively. Also, we use $ L^p(\Omega) $ to denote the standard $ L^p $ spaces with norm $ \| \cdot \|_{L^p(\Omega)} $. Sometimes we shall omit $ \Omega $ when $ \Omega = \R^d $ for simplicity. 
	
	\section{Properties of the Nehari manifold and the least action ground state}\label{sec:2}
	In this section, we first state the assumptions on the potential $ V $ and give some examples. Then we show some basic properties of the Nehari manifold which will be frequently used in the paper. We also review some asymptotic results of the least action ground state, which are observed in the numerical experiments. 
	\subsection{Preliminaries}\label{sec:setting}
	We first list the assumptions on the potential function $ V $: 
	\begin{enumerate}[label = (A\arabic*)]
		\item \label{rq1} $ V \in L^q(\R^d) + L^\infty(\R^d) $, where $ q = 1 $ when $ d = 1 $, $ q > 1 $ when $ d=2 $ and $ q = d/2 $ when $ d \geq 3 $. When $ d=1 $, $ V $ can also be the sum of a bounded Borel measure and an $ L^\infty $ function that vanishes at infinity. 
		\item \label{rq2}$ V $ vanishes at infinity, i.e.
		\begin{equation}
			|\{\vx\in\R^d: |V(\vx)|>a\}| < \infty, \quad \forall a>0. 
		\end{equation}
		\item \label{rq3} $ V $ is non-positive in the sense that the potential energy defined in \cref{potential_energy} is always non-positive. (Actually, this assumption can be weakened to \ref{rq3'} as illustrated in \cref{nonpositive}. ) 
	\end{enumerate}
	
	Let $ G:H^1(\R^d) \rightarrow \R $ be the potential energy defined by
	\begin{equation}\label{potential_energy}
		G(\psi) = \int_{\R^d} V(\vx) |\psi(\vx)|^2 \dx. 
	\end{equation}
	From assumptions \ref{rq1} and \ref{rq2}, we can deduce that (see \cite{cazenave2003,lieb2001}): the potential energy functional $ G: H^1(\R^d) \rightarrow \R $ is weakly continuous and $ S_\omega, I_\omega \in C^1(H^1(\R^d); \R) $ have Fr\'eschet derivatives $ S_\omega', I'_\omega \in C(H^1(\R^d); H^{-1}(\R^d)) $. Moreover, $ I_\omega(\phi) $ satisfies
	\begin{equation*}
		I_\omega(\phi) = \lrang{S'_\omega(\phi)}{\phi}_{H^{-1}, H^1} \text{ for all } \phi \in H^1(\R^d), 
	\end{equation*}
	where $ \lrang{\cdot}{\cdot}_{H^{-1}, H^1} $ is the dual pair between $ H^{-1}(\R^d) $ and $ H^1(\R^d) $.
	
	\begin{remark}\label{nonpositive}
		The non-positive assumption on the potential $ V $ can be weakened. It suffices to assume that the potential energy is negative for at least one least action ground state in the non-potential case. To be more precise, according to the result in \cite{coz2009}, let $ \phi^0_\omega (\omega>0) $ be the unique, positive, radial and decreasing least action ground state of \cref{SNLS} with $ V(\vx) \equiv 0 $ (we don't specify the dimension $ d $ in the definition of $ \phi^0_\omega $ but it will always be clear from the context). Note that all the least action ground states in this case are given by phase shift and translation of $ \phi_\omega^0 $, i.e. 
		\begin{equation*}
			e^{i \theta} \phi_\omega^0(\cdot - \mathbf{y})
		\end{equation*}
		for some $ \theta \in \R $ and $ \mathbf{y} \in \R^d $. Then, instead of assuming that $ V $ is non-positive, we just need to assume that
		\begin{enumerate}[label = (A3')]
			\item \label{rq3'}  $ V(\vx) \equiv 0 $ or there exists $ \mathbf y \in \R^d $ such that
			\begin{equation*}
				\int_{\R^d} V(\vx) |\phi^0_\omega(\vx - \mathbf y)|^2 \rmd \vx < 0. 
			\end{equation*}
		\end{enumerate}
		As mentioned before, in 1D, $ \phi_\omega^0 $ can be given analytically by
		\begin{equation}\label{exact_non_potential}
			\phi_\omega^0(x) = \left[ (\alpha+1)\omega \sech^2\left( \alpha \sqrt{\omega} x \right) \right]^{\frac{1}{2\alpha}}, \quad x \in \R, \quad \omega>0. 
		\end{equation}
	\end{remark}
	
	In the following, the assumptions \ref{rq1}, \ref{rq2} and \ref{rq3} will always be assumed (most results are still valid if \ref{rq3} is replaced by \ref{rq3'} as discussed in \cref{rem:modify}). Here, we give some examples of potentials that satisfy these assumptions. 
	\begin{exmp}\label{exmp:poten}
		\begin{enumerate}[label = \roman*)]
			\item attractive delta potentials in 1D:
			\begin{equation}\label{delta_potential}
				V(x) = -Z \delta(x), \quad x \in \R \text{ and } Z > 0. 
			\end{equation} 
			\item finite well potentials:
			\begin{equation}
				V(\vx) = \left\{
				\begin{aligned}
					&-Z, &&\vx \in U \subset \R^d, \\
					&0,  &&\vx \notin U, 
				\end{aligned}
				\right. 
				\quad Z > 0. 
			\end{equation} 
			\item attractive inverse power potentials: 
			\begin{equation}\label{inverse_power}
				V(\vx) = -\frac{\gamma}{|\vx|^\sigma}, \quad \vx \in \R^d, \quad \gamma>0, \quad 0<\sigma < \min\{2, d\}. 
			\end{equation} 
			\item zero potential (non-potential): $ V(\vx) \equiv 0 $. 
			\item smooth potentials: 
			\begin{align}
				&V(\vx) = - Ze^{-|\vx|^2}, \quad \vx \in \R^d, \quad Z>0. 
			\end{align}
		\end{enumerate}
	\end{exmp}
	
	\subsection{Properties of Nehari manifold}
	We define the functional $ I_\omega^\ast:H^1(\R^d) \rightarrow \R $ associated with $ H_\omega $ as	
	\begin{equation*}
		I^\ast_\omega(\phi) = \lrang{H_\omega(\phi)}{\phi}_{(H^{-1}(\R^d), H^{1}(\R^d))} = \int_{\R^d} \left[|\nabla \phi(\vx)|^2 + V(\vx) |\phi(\vx)|^2 + \omega |\phi(\vx)|^2 \right] \dx, 
	\end{equation*}
	which is the quadratic part of $ I_\omega $. Under the assumption $ \omega > \omega_0 $, we have $ I_\omega^\ast(\phi) \geq 0 $ for all $ \phi \in H^1(\R^d) $. Furthermore, we have the equivalence of norms: there exist $ C_1, C_2 > 0 $ depending on $ \omega $ such that for any $ \phi \in H^1(\R^d) $, 
	\begin{equation}\label{lem:equivalence of norms}
		C_1 \| \phi \|_{H^1(\R^d)}^2 \leq I_\omega^\ast(\phi) \leq C_2 \| \phi \|_{H^1(\R^d)}^2, 
	\end{equation}
	when $ \omega > \omega_0 $. 
	
	An important consequence of \cref{lem:equivalence of norms} is that the Nehari manifold is bounded away from $ 0 $ in {$ L^{2\alpha+2} $} norm, which is stated in the following proposition. 
	\begin{proposition}\label{boundedaway}
		When $ \omega>\omega_0 $, there exists $ \delta>0 $ depending on $ \omega $ such that for any $ \phi \in \mathcal{N}_\omega $, one has {$ \| \phi \|_{L^{2\alpha+2}(\R^d)} > \delta $}. 
	\end{proposition}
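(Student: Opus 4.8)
The plan is to combine the norm equivalence \cref{lem:equivalence of norms} with the Sobolev embedding $H^1(\R^d) \hookrightarrow L^{2\alpha+2}(\R^d)$, reducing the desired lower bound to a single elementary inequality. First I would rewrite the Nehari constraint in a convenient form. For any $\phi \in \mathcal{N}_\omega$ we have $I_\omega(\phi) = 0$, which upon isolating the nonlinear term reads
\begin{equation*}
	I_\omega^\ast(\phi) = \int_{\R^d} \left[ |\nabla \phi|^2 + V |\phi|^2 + \omega |\phi|^2 \right] \dx = \int_{\R^d} |\phi|^{2\alpha+2} \dx = \| \phi \|_{L^{2\alpha+2}(\R^d)}^{2\alpha+2}.
\end{equation*}
Since $\phi \neq 0$, the left-hand side is strictly positive by \cref{lem:equivalence of norms}, so in particular $\| \phi \|_{L^{2\alpha+2}(\R^d)} > 0$ and the manifold contains no element with vanishing $L^{2\alpha+2}$ norm.

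Next I would bound $I_\omega^\ast(\phi)$ from below by the $L^{2\alpha+2}$ norm. The lower inequality in \cref{lem:equivalence of norms} gives $I_\omega^\ast(\phi) \geq C_1 \| \phi \|_{H^1(\R^d)}^2$. The standing assumption $0 < \alpha < 2/(d-2)_+$ ensures $2 \leq 2\alpha + 2 < 2^\ast$, where $2^\ast$ is the critical Sobolev exponent (with no upper restriction when $d = 1, 2$), so the embedding $H^1(\R^d) \hookrightarrow L^{2\alpha+2}(\R^d)$ is continuous; let $C_S > 0$ be a constant with $\| \phi \|_{L^{2\alpha+2}(\R^d)} \leq C_S \| \phi \|_{H^1(\R^d)}$. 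Combining these with the rewritten constraint yields
\begin{equation*}
	\| \phi \|_{L^{2\alpha+2}(\R^d)}^{2\alpha+2} = I_\omega^\ast(\phi) \geq C_1 \| \phi \|_{H^1(\R^d)}^2 \geq \frac{C_1}{C_S^2} \| \phi \|_{L^{2\alpha+2}(\R^d)}^2.
\end{equation*}

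Finally, since $\| \phi \|_{L^{2\alpha+2}(\R^d)} > 0$, I would divide both sides by $\| \phi \|_{L^{2\alpha+2}(\R^d)}^2$ to obtain $\| \phi \|_{L^{2\alpha+2}(\R^d)}^{2\alpha} \geq C_1/C_S^2$, that is $\| \phi \|_{L^{2\alpha+2}(\R^d)} \geq (C_1/C_S^2)^{1/(2\alpha)}$, and then set $\delta := \tfrac{1}{2}(C_1/C_S^2)^{1/(2\alpha)}$ to recover the strict inequality claimed. The argument is otherwise routine; the only point deserving care is the validity of the Sobolev embedding, which is precisely what the subcriticality hypothesis $0 < \alpha < 2/(d-2)_+$ guarantees, together with the observation that $C_1$ depends only on $\omega$ (through \cref{lem:equivalence of norms}) while $C_S$ depends only on $d$ and $\alpha$, so that the resulting $\delta$ depends on $\omega$ as asserted.
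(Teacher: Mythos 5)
Your proof is correct and follows essentially the same route as the paper's: both rest on the coercivity estimate \cref{lem:equivalence of norms}, the Sobolev embedding $H^1(\R^d)\hookrightarrow L^{2\alpha+2}(\R^d)$, and the Nehari identity $I_\omega^\ast(\phi)=\|\phi\|_{L^{2\alpha+2}}^{2\alpha+2}$. The only difference is cosmetic: the paper first deduces an $H^1$ lower bound on $\mathcal{N}_\omega$ and then converts it, whereas you bound the $L^{2\alpha+2}$ norm directly.
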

	\begin{proof}
		By \cref{lem:equivalence of norms} and the Sobolev embedding theorem, one has
		\begin{equation*}
			I_\omega(\phi) = I_\omega^\ast(\phi) - \| \phi \|_{L^{2\alpha+2}(\R^d)}^{2\alpha+2} \geq C_1 \| \phi \|_{H^1(\R^d)}^2 - C_d \| \phi \|_{H^1(\R^d)}^{2\alpha+2}. 
		\end{equation*}
		Then there exists some $ \delta>0 $ such that when $ \| \phi \|_{H^1(\R^d)} \leq \delta $ and $ \phi \neq 0 $, one has $ I_\omega(\phi) > 0 $. It follows that if $ \phi \in \mathcal{N}_\omega $, then $ \| \phi \|_{H^1} > \delta $. Since $ \phi \in \mathcal{N}_\omega $ implies $ \| \phi \|_{L^{2\alpha+2}}^{2\alpha+2} = I_\omega^\ast(\phi) $, the conclusion follows immediately. 
	\end{proof}
	With the functional $ I_\omega^\ast $, we have
	\begin{equation}
		\begin{aligned}
			S_\omega(\phi) 
			&= I_\omega(\phi) + \frac{\alpha}{\alpha+1} \| \phi \|_{L^{2\alpha+2}}^{2\alpha+2} \\
			&= \frac{1}{\alpha+1} I_\omega(\phi) + \frac{\alpha}{\alpha+1} I^\ast_\omega(\phi)
		\end{aligned}\label{S1S2}
	\end{equation}
	and
	\begin{equation}\label{I}
		I_\omega(\phi) = I_\omega^\ast(\phi) - \| \phi \|_{L^{2\alpha+2}}^{2\alpha+2}. 
	\end{equation}
	
	We have the following lemma that is extremely helpful and is the motivation of our numerical scheme, which states that each ray $ \{\lambda \phi\}_{\lambda>0} $ with $ 0 \neq \phi \in H^1(\R^d) $ will intersect the Nehari manifold $ \mathcal{N}_\omega $ at one point. 
	\begin{lemma}\label{lambda}
		When $ \omega>\omega_0 $, for any $ \phi \in H^1(\R^d) \setminus \{0\} $, there exists a unique $ \lambda_\omega(\phi) > 0 $ such that $ I_\omega(\lambda_\omega(\phi) \phi) = 0 $ with
		\begin{equation}\label{eq:lambda}
			\lambda_\omega(\phi) = \left( \frac{I_\omega^\ast( \phi )}{ \| \phi \|^{2\alpha+2}_{L^{2\alpha+2}}} \right)^{\frac{1}{2\alpha}} =  \left( 1 + \frac{I_\omega( \phi )}{ \| \phi \|^{2\alpha+2}_{L^{2\alpha+2}}} \right)^{\frac{1}{2\alpha}}. 
		\end{equation} 
		Moreover, one has $ \lambda_\omega(\phi) < 1 $ if $ I_\omega(\phi)<0 $ and $ \lambda_\omega(\phi)>1 $ if $ I_\omega(\phi)>0 $. 
	\end{lemma}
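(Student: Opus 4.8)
The plan is to exploit the explicit homogeneity of the two pieces of $ I_\omega $ under the scaling $ \phi \mapsto \lambda \phi $ and then read off both the existence/uniqueness statement and the sign comparisons. First I would record that $ I_\omega^\ast $ is quadratic, so $ I_\omega^\ast(\lambda \phi) = \lambda^2 I_\omega^\ast(\phi) $, while the nonlinear term satisfies $ \| \lambda \phi \|_{L^{2\alpha+2}}^{2\alpha+2} = \lambda^{2\alpha+2} \| \phi \|_{L^{2\alpha+2}}^{2\alpha+2} $. Substituting into \cref{I} gives
\begin{equation*}
	I_\omega(\lambda \phi) = \lambda^2 I_\omega^\ast(\phi) - \lambda^{2\alpha+2} \| \phi \|_{L^{2\alpha+2}}^{2\alpha+2}, \quad \lambda > 0.
\end{equation*}

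Next I would solve $ I_\omega(\lambda \phi) = 0 $. Since $ \lambda > 0 $, I can divide by $ \lambda^2 $ to obtain the equivalent scalar equation $ I_\omega^\ast(\phi) = \lambda^{2\alpha} \| \phi \|_{L^{2\alpha+2}}^{2\alpha+2} $. This is where the standing hypothesis $ \omega > \omega_0 $ enters: by the norm equivalence \cref{lem:equivalence of norms} one has $ I_\omega^\ast(\phi) \geq C_1 \| \phi \|_{H^1}^2 > 0 $ for $ \phi \neq 0 $, and of course $ \| \phi \|_{L^{2\alpha+2}}^{2\alpha+2} > 0 $ as well. Hence the quotient on the right is a well-defined positive number, and taking its unique positive $ 2\alpha $-th root yields precisely the formula \cref{eq:lambda} for $ \lambda_\omega(\phi) $.

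For uniqueness I would note that $ \lambda \mapsto \lambda^{2\alpha} $ is strictly increasing on $ (0, \infty) $ (as $ \alpha > 0 $), so the scalar equation admits at most one positive solution; equivalently, the function $ g(\lambda) := I_\omega(\lambda \phi)/\lambda^2 = I_\omega^\ast(\phi) - \lambda^{2\alpha} \| \phi \|_{L^{2\alpha+2}}^{2\alpha+2} $ is strictly decreasing, passing from $ I_\omega^\ast(\phi) > 0 $ through a single zero. The second expression in \cref{eq:lambda} then follows at once by inserting $ I_\omega^\ast(\phi) = I_\omega(\phi) + \| \phi \|_{L^{2\alpha+2}}^{2\alpha+2} $ from \cref{I} into the first. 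Finally, the comparisons with $ 1 $ are read directly off the first formula: $ \lambda_\omega(\phi) < 1 $ holds if and only if $ I_\omega^\ast(\phi) < \| \phi \|_{L^{2\alpha+2}}^{2\alpha+2} $, i.e. if and only if $ I_\omega(\phi) < 0 $, and the reverse inequality gives $ \lambda_\omega(\phi) > 1 $.

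I do not expect any serious obstacle here, since the lemma is essentially a consequence of scaling homogeneity. The only point requiring genuine care is the strict positivity of $ I_\omega^\ast(\phi) $, which is exactly where the assumption $ \omega > \omega_0 $ (through \cref{lem:equivalence of norms}) is needed to guarantee that $ \lambda_\omega(\phi) $ exists as a finite positive number rather than degenerating to $ 0 $ or $ +\infty $.
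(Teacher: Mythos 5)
Your proof is correct and is exactly the standard scaling argument the paper has in mind: the paper itself omits the proof as ``very straightforward,'' and your computation $I_\omega(\lambda\phi)=\lambda^2 I_\omega^\ast(\phi)-\lambda^{2\alpha+2}\|\phi\|_{L^{2\alpha+2}}^{2\alpha+2}$ together with the strict positivity of $I_\omega^\ast(\phi)$ from \cref{lem:equivalence of norms} is precisely the intended justification. You correctly identify that $\omega>\omega_0$ is needed only to ensure $I_\omega^\ast(\phi)>0$ for $\phi\neq 0$, so there is nothing to add.
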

	\begin{proof}
		The proof is very straightforward and we shall omit it for brevity. 
	\end{proof}
	
	As shown in \cref{minimization}, the least action ground state can also be characterized as the minimizer of $ S_\omega $ on the Nehari manifold $ \mathcal{N}_\omega $. Then we can show the existence of nonnegative least action ground states, which enable us to choose real initial data in our numerical scheme. 
	
	\begin{proposition}[existence of nonnegative ground state]\label{nonnegative}
		If $ \phi_\omega $ is a least action ground state for some $ \omega > \omega_0 $, then so is $ |\phi_\omega| $. Moreover, there exists a constant $ \theta \in \R $ such that $ \phi_\omega = e^{i \theta}|\phi_\omega| $. 
	\end{proposition}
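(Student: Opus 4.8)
The plan is to combine the Nehari-manifold characterization \cref{minimization} of the least action ground state with the diamagnetic inequality. Write $u := |\phi_\omega|$; since $\phi_\omega \in H^1(\R^d)$ we have $u \in H^1(\R^d)$, and $u \neq 0$ because $\phi_\omega$ is nontrivial. The potential energy $G$, the mass $M$, and the $L^{2\alpha+2}$ norm all depend only on $|\phi_\omega|$, so they are unchanged when $\phi_\omega$ is replaced by $u$; the only term that can move is the Dirichlet energy, and the diamagnetic inequality $|\nabla |\phi_\omega|| \le |\nabla \phi_\omega|$ a.e. yields $\int_{\R^d} |\nabla u|^2 \le \int_{\R^d} |\nabla \phi_\omega|^2$. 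Consequently $I_\omega^\ast(u) \le I_\omega^\ast(\phi_\omega)$ and $I_\omega(u) \le I_\omega(\phi_\omega) = 0$.

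Next I would project $u$ onto the Nehari manifold via \cref{lambda}: there is $\lambda := \lambda_\omega(u) \in (0,1]$ with $\lambda u \in \mathcal{N}_\omega$, where $\lambda \le 1$ precisely because $I_\omega(u) \le 0$. Using \cref{S1S2} with $I_\omega(\lambda u) = 0$ and the quadratic homogeneity $I_\omega^\ast(\lambda u) = \lambda^2 I_\omega^\ast(u)$, one gets $S_\omega(\lambda u) = \frac{\alpha}{\alpha+1} \lambda^2 I_\omega^\ast(u) \le \frac{\alpha}{\alpha+1} I_\omega^\ast(\phi_\omega) = S_\omega(\phi_\omega)$, the last equality holding since $\phi_\omega \in \mathcal{N}_\omega$. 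Because $\lambda u \in \mathcal{N}_\omega$ and $\phi_\omega$ minimizes $S_\omega$ over $\mathcal{N}_\omega$, the reverse inequality $S_\omega(\lambda u) \ge S_\omega(\phi_\omega)$ also holds, so every inequality in the chain is an equality. As $I_\omega^\ast(\phi_\omega) > 0$ (by the norm equivalence \cref{lem:equivalence of norms} and $\phi_\omega \neq 0$), the identity $\lambda^2 I_\omega^\ast(u) = I_\omega^\ast(\phi_\omega)$ together with $\lambda \le 1$ and $I_\omega^\ast(u) \le I_\omega^\ast(\phi_\omega)$ forces $\lambda = 1$ and $I_\omega^\ast(u) = I_\omega^\ast(\phi_\omega)$. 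Hence $u \in \mathcal{N}_\omega$ with $S_\omega(u) = S_\omega(\phi_\omega)$, i.e. $u = |\phi_\omega|$ is itself a least action ground state, proving the first claim.

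For the phase statement I would extract rigidity from the equality $I_\omega^\ast(u) = I_\omega^\ast(\phi_\omega)$, which is exactly equality in the diamagnetic inequality, $\int_{\R^d} |\nabla |\phi_\omega||^2 = \int_{\R^d} |\nabla \phi_\omega|^2$. Since $u = |\phi_\omega|$ is a nonnegative, nontrivial solution of \cref{SNLS}, elliptic regularity together with the strong maximum principle (Harnack inequality) gives $u > 0$ on all of $\R^d$. Writing $\phi_\omega = u\, e^{i\theta}$ on the connected set $\{u > 0\} = \R^d$, the pointwise identity $|\nabla \phi_\omega|^2 = |\nabla u|^2 + u^2 |\nabla \theta|^2$ combined with the integrated equality forces $u^2 |\nabla \theta|^2 = 0$, hence $\nabla \theta = 0$ a.e.; as $\R^d$ is connected, $\theta$ is a constant $\theta \in \R$ and $\phi_\omega = e^{i\theta} |\phi_\omega|$.

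I expect the phase-rigidity step to be the main obstacle. It is where one must invoke the strong maximum principle to guarantee that $u$ does not vanish, so that the polar representation $\phi_\omega = u\, e^{i\theta}$ is globally well defined, and where one must justify carefully, in the $H^1$ setting, the pointwise splitting $|\nabla \phi_\omega|^2 = |\nabla u|^2 + u^2 |\nabla \theta|^2$ so that equality in the diamagnetic inequality yields a genuinely constant phase rather than a merely locally constant one. The earlier steps are routine consequences of \cref{lambda}, \cref{S1S2}, and \cref{lem:equivalence of norms}.
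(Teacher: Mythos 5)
Your proposal is correct and follows essentially the same route as the paper: apply the diamagnetic inequality, project $|\phi_\omega|$ onto the Nehari manifold via \cref{lambda}, use \cref{S1S2} and the minimality of $\phi_\omega$ on $\mathcal{N}_\omega$ to force $\lambda=1$, and then read off the constant phase from the equality case. The only difference is that the paper disposes of the phase-rigidity step by directly citing the known equality statement for $\int|\nabla|\phi||^2\le\int|\nabla\phi|^2$ from \cite{bao2013,lieb2001}, whereas you supply the underlying argument (strong maximum principle plus polar decomposition); that extra care is sound but not needed for the paper's purposes.
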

	\begin{proof}
		It is well known that (see \cite{bao2013,lieb2001})
		\begin{equation*}
			\int_{\R^d} |\nabla|\phi(\vx)||^2 \rmd \vx \leq \int_{\R^d} |\nabla \phi(\vx)|^2 \dx
		\end{equation*}
		and the equality holds if and only if there exists a constant $ \theta \in \R $ such that $ \phi = e^{i \theta} |\phi| $. 
		Then one has
		\begin{equation*}
			S_\omega(|\phi_\omega|) \leq S_\omega(\phi_\omega) \quad \text{and} \quad I_\omega(|\phi_\omega|) \leq I_\omega(\phi_\omega) = 0. 
		\end{equation*}
		By \cref{lambda}, one can find $ 0 < \lambda \leq 1 $ such that $ I_\omega(\lambda |\phi_\omega|) = 0 $. Recall \cref{S1S2}, then it follows that
		\begin{equation*}
			S_\omega(\lambda |\phi_\omega|) = \frac{\alpha }{\alpha+1} \| \lambda |\phi_\omega| \|_{L^{2\alpha+2}}^{2\alpha+2} \leq \frac{\alpha }{\alpha+1} \| \phi_\omega \|_{L^{2\alpha+2}}^{2\alpha+2} = S_\omega(\phi_\omega), 
		\end{equation*}
		which implies that $ \lambda = 1 $ and thus $ |\phi_\omega| $ is also a least action ground state. The second part of the proposition follows immediately. 
	\end{proof}
	In the following, we shall only consider the nonnegative least action ground states, which, in particular, are real valued. 
	
	\subsection{Properties of least action ground states when $ \omega \rightarrow \infty $ and $ \omega \rightarrow \omega_0 $}\label{sec:asymtotics}
	In this subsection, we review some results about the limiting behavior of the least action ground states when $ \omega \rightarrow \infty $ and $ \omega \rightarrow \omega_0 $. Most of the results described here can be found in \cite{fukuizumi_unstable, fukuizumi2003}. 
	
	First, the following properties of $ S_g(\omega) $ (defined in \cref{S_g}) are well known: i) $ S_g(\omega) $ is strictly increasing in $ \omega $; ii) $ S_g(\omega) \rightarrow 0 $ as $ \omega \rightarrow \omega_0 $ and $ S_g(\omega) \rightarrow \infty $ as $ \omega \rightarrow \infty $; iii) When $ V(\vx) \not \equiv 0 $ and $ \omega $ close to $ \omega_0 > 0 $, 
	\begin{equation}\label{estimate of S_g}
		0 < S_g(\omega) \lesssim (\omega - \omega_0)^{1 + \frac{1}{\alpha}}, 
	\end{equation}
	and when $ V(\vx) \equiv 0 $, $ S_g(\omega) \sim \omega^{1 + \frac{1}{\alpha} - \frac{d}{2}} $. With more involved analysis, one can show that the estimate \cref{estimate of S_g} is sharp (see the proof of Lemma 4.3 in \cite{fukuizumi2003}). Besides, we can have a precise characterization of the nonnegative least action ground states when $ \omega \rightarrow \omega_0 $ and $ \omega \rightarrow \infty $. Let $ \{\phi_\omega\}_{\omega > \omega_0} $ be the family of non-negative least action ground states of \cref{SNLS}. We define two rescaled function $ \widehat\phi_\omega $ and $ \widecheck \phi_\omega $ as
	\begin{equation}\label{rescaling}
		\widehat \phi_\omega(\vx) = \frac{\phi_\omega(\vx)}{ \| \phi_\omega \|_{L^2} }, \quad \widecheck \phi_\omega(\vx) = \omega^{-\frac{1}{2\alpha}} \phi_\omega \left(\frac{\vx}{\omega^\frac{1}{2}} \right ), \quad \vx \in \R^d, \quad \omega > \omega_0 \geq 0. 
	\end{equation} 
	Then if $ \omega_0 > 0 $ and $ V \in L_\text{loc}^1(\R^d) $, one has
	\begin{equation}\label{lim_omega_0}
		\lim_{\omega \rightarrow \omega_0} \| \widehat \phi_\omega - \phi_0^{\text{lin}}\|_{H^1(\R^d)} = 0, 
	\end{equation}
	where $ \phi_0^{\text{lin}} $ is the unique positive normalized linear ground state. On the other hand, if we in addition assume that $ V \in L^q(\R^d) + L^\infty(\R^d) $ for some $ q > d/2 $ and $ q \geq 1 $ and $ \phi_\omega $ is $ G_R $-invariant as defined in \cite{fukuizumi2003}, then one has
	\begin{equation}\label{lim_omega_infty}
		\lim_{\omega\rightarrow\infty} \| \widecheck \phi_\omega - \phi^0_1 \|_{H^1(\R^d)} = 0, 
	\end{equation}
	where  $ \phi^0_1 $ is defined in \cref{nonpositive}. A important observation is that when $ \alpha = 2/d $ is $ L^2 $-critical, one has
	\begin{equation}\label{mass_thereshold}
		\| \widecheck \phi_\omega \|_{L^2(\R^d)} = \| \phi_\omega \|_{L^2(\R^d)}, 
	\end{equation}
	which implies that $ M_g(\omega) \rightarrow M(\phi_1^0) $ as $ \omega \rightarrow \infty $. This will be observed in our numerical results in \cref{sec:numerical results}. 
	\begin{remark}
		The additional assumption that $ V \in L^q(\R^d) + L^\infty(\R^d) $ for some $ q > d/2 $ and $ q \geq 1 $ only differs from our assumption at the end-point $ q=d/2 $ when $ d \geq 3 $, which is used to guarantee that the rescaled potential energy of $ \widecheck \phi_\omega $ will converge to zero as $ \omega \rightarrow \infty $. The assumption that $ \phi_\omega $ is $ G_R $-invariant is used to exclude the possibility of translation. 
	\end{remark}
	\begin{remark}
		The assumption (V3) about the compactness of the minimizing sequence proposed in \cite{fukuizumi2003} can be verified under assumptions \ref{rq1}, \ref{rq2} and $ \omega_0 > 0 $ (see \cite{lieb2001}). 
	\end{remark}
	
	\section{Normalized Gradient Flow}\label{sec:3}
	In this section, we present a CNGF and show its $ I_\omega $ preserving and action decreasing property. 
	\subsection{Gradient flow with discrete normalization (GFDN)}
	Motivated by the method in \cite{bao2004}, we consider a discrete normalized gradient flow to solve the least action ground state. We shall use a uniform mesh in time with $ \tau $ being the time step size and $ t_n = n \tau $. To adapt an algorithm for the solution of the usual gradient flow to the minimization problem under a constraint, it is natural to consider the following splitting (or projection) scheme: 
	\begin{align}
		&\partial_t \phi = -\frac{1}{2} \frac{\delta S_\omega(\phi)}{\delta \phi} = \Delta \phi - V(\vx)\phi - \omega \phi + |\phi|^{2\alpha} \phi, \quad \vx \in \R^d, \quad t_n < t < t_{n+1}, \quad n \geq 0, \label{GF}\\
		&\phi(\cdot, t_{n+1}) := \lambda_{n+1}\phi(\cdot, t_{n+1}^-), \quad n\geq 0, \quad \lambda_{n+1} = \left( \frac{I_\omega^\ast(\phi(\cdot, t_{n+1}^-))}{\|\phi(\cdot, t_{n+1}^-)\|^{2\alpha+2}_{L^{2\alpha+2}}} \right)^{\frac{1}{2\alpha}} \label{normalization},\\
		&\phi(\vx, 0) = \phi_0(\vx), \quad \vx \in \R^d \label{endGF},
	\end{align}
	where $ \phi_0 \in \mathcal{N}_\omega $ (we will always require this condition) and $ \phi: \R^d \times [0, \infty) \rightarrow \R $ is a time dependent function. It follows immediately from the construction that the gradient flow \cref{GF},\cref{endGF} has the action diminishing property, i.e. 
	\begin{equation*}
		S_\omega(\phi(\cdot, t')) \leq S_\omega(\phi(\cdot, t)), \quad t_n < t < t'< t_{n+1}, \quad n \geq 0. 
	\end{equation*}
	\begin{remark}
		The solution to \cref{GF},\cref{endGF} may not preserve the normalized action decreasing property
		\begin{equation*}
			S_\omega(\tilde \phi(\cdot, t')) \leq S_\omega(\tilde \phi(\cdot, t)), \quad 0< t < t'<T_\text{max}, 
		\end{equation*} where $ \phi(x, t) $ is the solution to \cref{GF} with $ \phi(\cdot, 0) = \phi_0 \in \mathcal{N}_\omega $, $\tilde \phi = \lambda_\omega(\phi) \phi \in \mathcal{N}_\omega $ and $ T_{\text{max}} $ is the maximal existence time. Actually, we solve \cref{GF}, \cref{endGF} in one dimension with $ \Omega = \R $ and $ V(\vx) \equiv 0 $ numerically by the time splitting spectral method (see \cite{bao2004}) with the initial condition $ \phi_0 = \tilde g_j \in \mathcal{N}_\omega, j \in \{1, 2\} $ where $ g_1(x) = e^{-x^2/2} $ and $ g_2(x) = \sech(x)e^{-x^2/2} $ (note that we have to use different initial data for different $ \omega $). We fix $ \alpha = 1 $. In computation, we choose $ h=1/64 $ and $ \Omega = [-16, 16] $. \Cref{fig:GF_without_normalization} shows, for different $ \omega $, the evolution of normalized action $ S_\omega(\tilde \phi(\cdot, t)) $. This implies the exact flow of \cref{GF},\cref{endGF} may not preserve the normalized action decreasing property for large time step. However, when we further discretize \cref{GF} in time, it is shown that the normalized action is always decreasing for any $ \tau > 0 $ for the GFDN-BF scheme in \cref{sec:time discretization}. 
		
		\begin{figure}[htbp]
			\centering\vspace{-0.5cm}
			\subfloat{\includegraphics[width=0.475\textwidth]{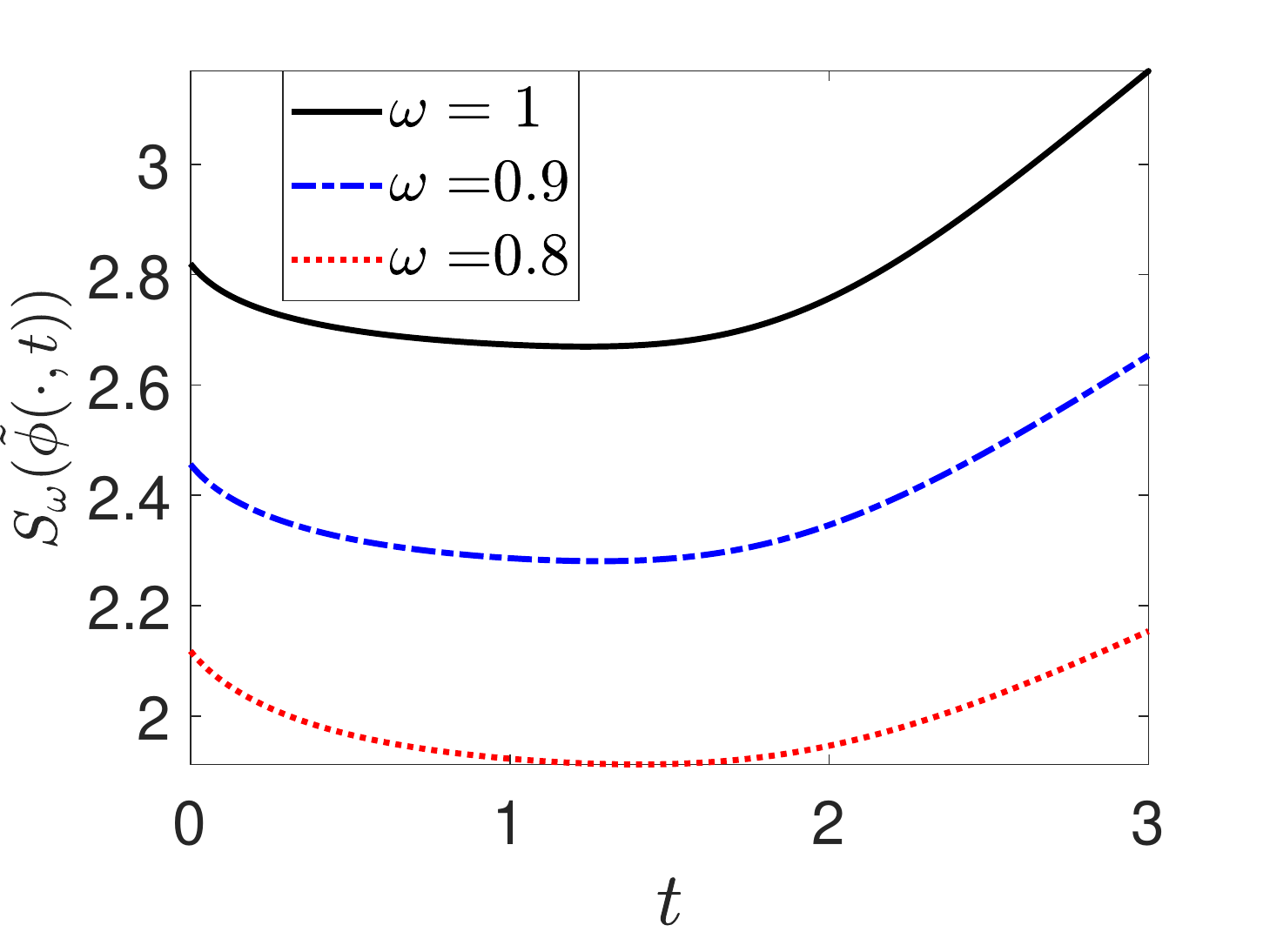}}
			\subfloat{\includegraphics[width=0.475\textwidth]{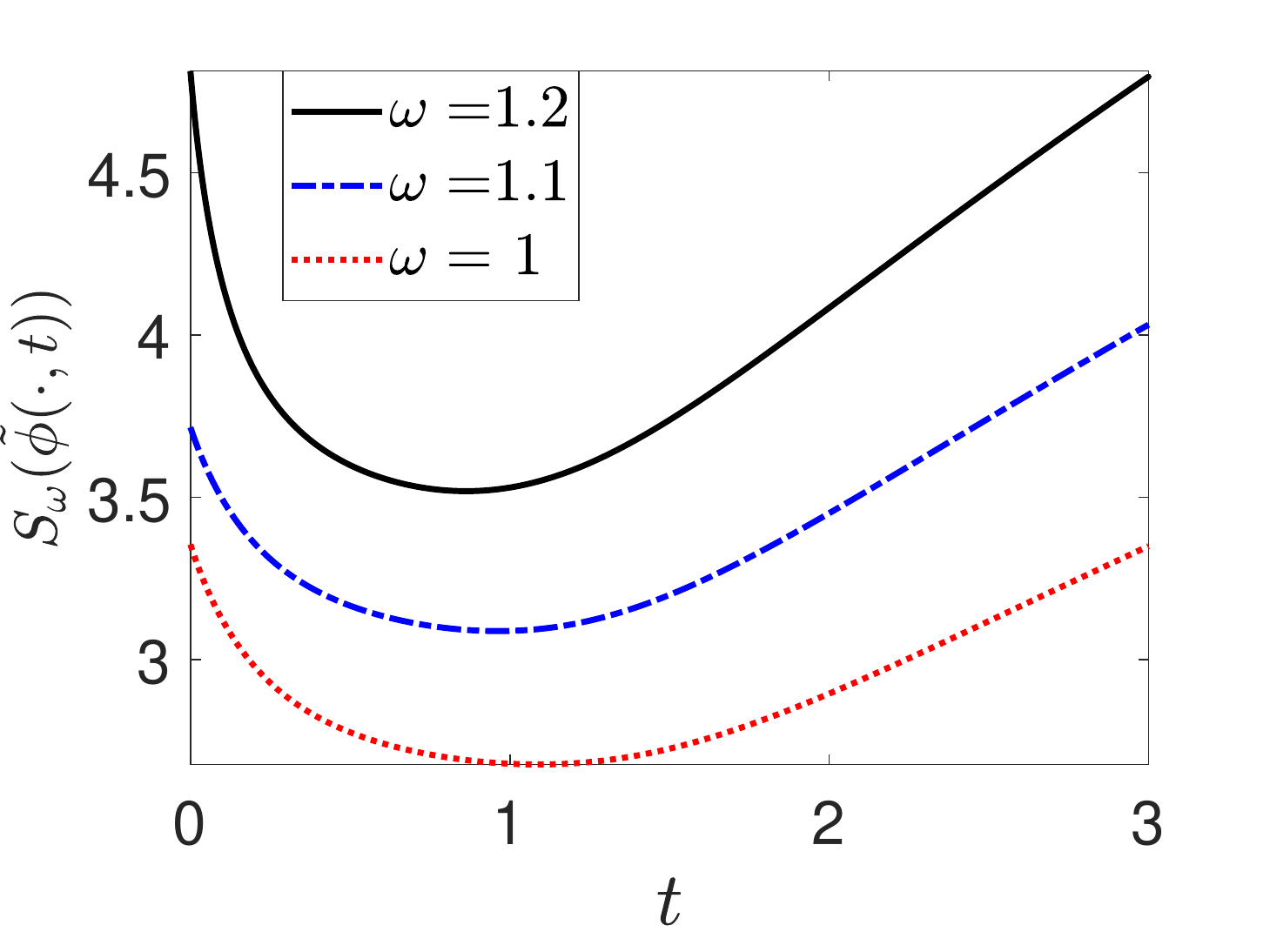}}\vspace{-0.35cm}
			\caption{$ S_\omega(\tilde \phi(\cdot, t)) $ as a function of time for with initial data $ \tilde g_1 $ (left) and $ \tilde g_2 $ (right)}\vspace{-0.5cm}
			\label{fig:GF_without_normalization}
		\end{figure}
	\end{remark}
	
	\subsection{Continuous normalized gradient flow (CNGF)}
	In fact, the normalization step \cref{normalization} is equivalent to solving the following ODE \emph{exactly}: 
	\begin{align}
		v_t(\vx, t) &= \mu_{\phi}(t, \tau) v(\vx,t), \quad \vx \in \R^d, \quad t_n < t < t_{n+1}, \quad n \geq 0, \\
		v(\vx, t_n) &= \phi(\vx, t_{n+1}^-), \quad \vx\in\R^d, 
	\end{align}
	where $ \mu_\phi $ (do not confuse it with $ \mu^g $ defined in \cref{mu}) is given by
	\begin{equation}
		\mu_{\phi}(t, \tau) \equiv \frac{1}{2\alpha \tau} \ln \lambda_{n+1}^{2\alpha} = \frac{1}{2\alpha\tau} \ln \frac{\int_{\R^d} \left[|\nabla \phi(t_{n+1}^-)|^2 + V|\phi(t_{n+1}^-)|^2 + \omega |\phi(t_{n+1}^-)|^2 \right] \dx}{ \int_{\R^d} |\phi(t_{n+1}^-)|^{2\alpha+2} \dx}, \quad t_n \leq t \leq t_{n+1}. 
	\end{equation}
	Thus the GFDN \cref{GF}-\cref{endGF} can be viewed as a first-order splitting method for the gradient flow with discontinuous coefficients: 
	\begin{align}
		&\partial_t \phi = \Delta \phi - V(\vx) \phi - \omega \phi + |\phi|^{2 \alpha} \phi + \mu_{\phi}(t, \tau) \phi, \quad \vx \in \R^d, \quad t \geq 0, \\
		&\phi(\vx, 0) = \phi_0(\vx), \quad \vx \in \R^d. 
	\end{align}
	
	We consider the GFDN \cref{GF}-\cref{endGF} as time step goes to zero. Letting $ \tau \rightarrow 0 $, we see that
	\begin{equation}
		\mu_\phi(t) := \lim_{\tau \rightarrow 0^+} \mu_{\phi}(t, \tau) = -\frac{\| H_\omega \phi(\cdot, t) \|^2_{L^2} - (\alpha+2) \lrang{H_\omega \phi(\cdot, t)}{|\phi(\cdot, t)|^{2\alpha} \phi(\cdot, t)} + (\alpha+1) \| \phi(\cdot, t) \|_{L^{4\alpha + 2}}^{4\alpha + 2}}{\alpha  \| \phi(\cdot, t) \|_{L^{2\alpha+2}}^{2\alpha+2}}. \label{mu_phi}
	\end{equation}
	The observation suggests that we should consider the following CNGF: 
	\begin{align}
		&\partial_t \phi = \Delta \phi - V(\vx)\phi - \omega \phi + |\phi|^{2\alpha} \phi + \mu_\phi(t) \phi, \quad \vx \in \R^d, \quad t \geq 0, \label{CNGF}\\
		&\phi(\vx, 0) = \phi_0(\vx), \quad \vx \in \R^d. \label{CNGFend} 
	\end{align}
	Then we shall show that if $ \phi_0 \in \mathcal{N}_\omega $ in \cref{CNGFend}, the CNGF \cref{CNGF}-\cref{CNGFend} will preserve $ I_\omega = 0 $ and diminish the action. Let $ \phi(t) = \phi(\cdot, t) (0 \leq t \leq T)$ and $ L(\phi) = H_\omega \phi - |\phi|^{2\alpha} \phi $. We first note that
	\begin{equation}\label{eq:mu_phi}
		\mu_\phi(t) = -\frac{\lrang{I_\omega'(\phi(t))}{L(\phi(t))}}{2\alpha  \| \phi(t) \|_{L^{2\alpha+2}}^{2\alpha+2}}. 
	\end{equation}
	Then one has
	\begin{equation*}
		\frac{d}{d t} I_\omega(\phi(t)) = \langle I'_\omega(\phi(t)), \phi_t(t) \rangle = \langle I'_\omega(\phi(t)), -L(\phi(t)) + \mu_\phi(t) \phi(t) \rangle = 2\mu_\phi(t) I_\omega(\phi(t)). 
	\end{equation*}
	Since $ I_\omega(\phi(0)) = 0 $, one can integrate it exactly to find that $ I_\omega(\phi(t)) = 0 $ for $ t \in [0, T] $. The $ S_\omega $ decreasing property follows from
	\begin{align*}
		\frac{d}{d t} S_\omega(\phi(t)) &= \langle S_\omega'(\phi(t)), \phi_t(t) \rangle = \langle S_\omega'(\phi(t)), -L(\phi(t)) \rangle + \mu_\phi(t) \langle S_\omega'(\phi(t)), \phi(t) \rangle \\
		& = -2\|L(\phi(t))\|_{L^2}^2 + 2\mu_\phi(t) I_\omega(\phi(t)) = -2\|L(\phi(t))\|_{L^2}^2 \leq 0. 
	\end{align*}
	\begin{remark}
		The RHS of \cref{CNGF} can also be viewed as a projected gradient and thus we shall call it projected gradient flow (PGF) in \cref{sec:numerical methods}. 
	\end{remark}

	\section{Further discretization of GFDN}\label{sec:numerical methods}
	In this section, we further discretize the GFDN in time by backward-forward Euler method to obtain the GFDN-BF scheme. Some good properties including positivity preserving and unconditionally action diminishing are proved. Then we show three standard spatial discretization methods to deal with different potentials. 
	\subsection{Semi-discretization in time}\label{sec:time discretization}
	We shall show the semi-discretization schemes for the GFDN \cref{GF}-\cref{endGF}, which is modified from corresponding schemes used to compute the least energy ground state. For comparison, we show another three semi-discretization schemes, which are also well-known in the literature of least energy ground state and we shall compare the performance of them in \cref{sec:comparison}. 
	
	We choose a time step size $ \tau > 0 $ and let $ t_n = n \tau $. We denote by $ \phi^n $ the approximation solution of $ \phi(\cdot, t) $ at $ t = t_n $. We use backward-forward Euler method for time discretization and then the GFDN-BF scheme reads: 
	\begin{align}
		&\frac{\tilde \phi^{n+1}-\phi^n}{\tau} = \Delta \tilde \phi^{n+1} - \omega \tilde \phi^{n+1} - V \phi^{n} +  |\phi^n|^{2\alpha} \phi^{n} \text{ on } \R^d,  \quad n \geq 0 \label{GFBF} \\ 
		&\phi^{n+1} = \lambda_{n+1} \tilde \phi^{n+1}, \quad \lambda_{n+1} = \left( \frac{I_\omega^\ast(\tilde \phi^{n+1}) }{  \| \tilde \phi^{n+1} \|^{2\alpha+2}_{L^{2\alpha+2}}} \right)^{\frac{1}{2\alpha}}, \quad \phi^0 = \phi_0 \in \mathcal{N}_\omega. \label{endGFBF}
	\end{align}
	
	To show that the GFDN-BF is well-defined, we have the following proposition. 
	\begin{proposition}\label{well-definedness of GFBF}
		Suppose that $ \omega>\omega_0 \geq 0 $. Then for any $ \tau > 0 $, \cref{GFBF} has a unique solution $ \tilde \phi^{n+1} $. Moreover, if $ \phi^n \neq 0 $, then $ \tilde \phi^{n+1} \neq 0 $. 
	\end{proposition}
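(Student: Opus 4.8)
The plan is to recognize the implicit step \cref{GFBF} as a single linear elliptic equation for $\tilde\phi^{n+1}$, with $\phi^n$ entering only as data, and to solve it by the Lax--Milgram theorem. Collecting the terms carrying $\tilde\phi^{n+1}$ on the left, \cref{GFBF} is equivalent to
\begin{equation*}
	(1+\tau\omega)\tilde\phi^{n+1} - \tau\Delta\tilde\phi^{n+1} = f^n, \qquad f^n := \phi^n - \tau V\phi^n + \tau|\phi^n|^{2\alpha}\phi^n,
\end{equation*}
whose weak formulation is governed by the bilinear form
\begin{equation*}
	a(u,v) = \tau\lrang{\nabla u}{\nabla v} + (1+\tau\omega)\lrang{u}{v}, \qquad u,v\in H^1(\R^d).
\end{equation*}
Since $\omega>\omega_0\geq 0$ forces $1+\tau\omega>1>0$ for every $\tau>0$, this form is bounded (constant $\max\{\tau,1+\tau\omega\}$) and coercive, with $a(u,u)=\tau\|\nabla u\|_{L^2}^2+(1+\tau\omega)\|u\|_{L^2}^2\geq\min\{\tau,1+\tau\omega\}\,\|u\|_{H^1(\R^d)}^2$.

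First I would verify that $f^n$ defines a bounded linear functional on $H^1(\R^d)$, i.e.\ $f^n\in H^{-1}(\R^d)$, so that Lax--Milgram applies. The term $\phi^n\in L^2(\R^d)$ is immediate; the potential term $V\phi^n$ belongs to $H^{-1}(\R^d)$ by the very mapping property of multiplication by $V$ that, under \ref{rq1} and \ref{rq2}, renders $G$ weakly continuous and $S_\omega,I_\omega\in C^1(H^1(\R^d);\R)$ with derivatives valued in $H^{-1}(\R^d)$ (indeed $V\phi^n$ already appears as a summand of $I_\omega'(\phi^n)$, as recorded in \cref{sec:setting}); and the nonlinear term is controlled by Sobolev embedding, since $2\alpha+2$ is subcritical gives $H^1(\R^d)\hookrightarrow L^{2\alpha+2}(\R^d)$, whence $|\phi^n|^{2\alpha}\phi^n\in L^{(2\alpha+2)/(2\alpha+1)}(\R^d)\hookrightarrow H^{-1}(\R^d)$ by duality. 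With $f^n\in H^{-1}(\R^d)$ and $a$ bounded and coercive, Lax--Milgram produces a unique $\tilde\phi^{n+1}\in H^1(\R^d)$, which is the claimed existence and uniqueness.

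For the nonvanishing I would use that the operator $(1+\tau\omega)\mathrm{Id}-\tau\Delta$ is injective, so $\tilde\phi^{n+1}=0$ forces $f^n=0$ in $H^{-1}(\R^d)$; pairing with $\phi^n\in H^1(\R^d)$ then gives
\begin{equation*}
	0=\lrang{f^n}{\phi^n}=\|\phi^n\|_{L^2}^2-\tau\,G(\phi^n)+\tau\,\|\phi^n\|_{L^{2\alpha+2}}^{2\alpha+2}.
\end{equation*}
Under the standing assumption \ref{rq3} one has $G(\phi^n)\leq 0$, so all three summands are nonnegative and their sum vanishing forces $\|\phi^n\|_{L^2}=0$, i.e.\ $\phi^n=0$, contradicting $\phi^n\neq 0$; hence $\tilde\phi^{n+1}\neq 0$. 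For the iterates actually produced by the scheme one has $\phi^n\in\mathcal{N}_\omega$ (by \cref{lambda} the normalization \cref{endGFBF} lands on $\mathcal{N}_\omega$), and then substituting the Nehari relation $\|\phi^n\|_{L^{2\alpha+2}}^{2\alpha+2}=I_\omega^\ast(\phi^n)$ cancels the $G$-term and yields $\lrang{f^n}{\phi^n}=(1+\tau\omega)\|\phi^n\|_{L^2}^2+\tau\|\nabla\phi^n\|_{L^2}^2>0$ for $\phi^n\neq0$; this removes any sign restriction on $V$ and so covers the weakened assumption \ref{rq3'} as well.

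I expect the boundedness/coercivity estimate and the invocation of Lax--Milgram to be routine; the only delicate point is the regularity bookkeeping placing $f^n$ in $H^{-1}(\R^d)$, and specifically that multiplication by the possibly singular potential $V$ maps $H^1(\R^d)$ into $H^{-1}(\R^d)$. This, however, is precisely the property already established from \ref{rq1}--\ref{rq2} in \cref{sec:setting}, so it can be cited rather than reproved.
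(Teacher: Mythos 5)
Your proposal is correct and follows essentially the same route as the paper: existence and uniqueness come from standard linear elliptic theory (which you spell out via Lax--Milgram), and nonvanishing comes from supposing $\tilde\phi^{n+1}=0$, pairing the resulting identity with $\phi^n$, and recognizing a sum of nonnegative terms. The only cosmetic difference is that your primary sign argument invokes \ref{rq3} directly, whereas the paper cancels the potential term using $I_\omega(\phi^n)=0$ --- a variant you also record, and which, as you note, is the one that survives under \ref{rq3'}.
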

	\begin{proof}
		The existence and uniqueness of the solution follows from standard elliptic theory. We shall only show that $ \tilde \phi^{n+1} \neq 0 $. Suppose that $ \tilde \phi^{n+1} = 0 $. It follows from \cref{GFBF} that
		\begin{equation}\label{eq:123}
			\left( \frac{1}{\tau} - V + |\phi^n|^{2 \alpha} \right) \phi^n = 0. 
		\end{equation}
		Multiplying both sides of \cref{eq:123} by $ \phi^n $ and integrating over $ \R^d $ to obtain
		\begin{equation*}
			\int_{\R^d} \left[ \frac{|\phi^n|^2}{\tau} - V |\phi^n|^2 + |\phi^n|^{2\alpha+2} \right] \rmd \vx = 0. 
		\end{equation*}
		Since $ I_\omega(\phi^n) = 0 $, one has
		\begin{equation*}
			\int_{\R^d} \left[ \frac{|\phi^n|^2}{\tau} + |\nabla \phi^n|^2 + \omega |\phi^n|^2 \right] \rmd \vx = 0, 
		\end{equation*}
		which implies $ \phi^n = 0 $, which contradicts the assumption. 
	\end{proof}
	We stop the iteration when 
	\begin{equation}\label{stopping}
		\frac{\| \phi^{n+1} - \phi^n \|}{\tau} \leq \vep, 
	\end{equation}
	where $ \vep>0 $ is a small constant depending on the error tolerance and $ \| \cdot \| $ can be chosen as any reasonable norm in practical computation. 
	
	Note that if the initial data $ \phi_0 \in \mathcal{F}_\omega $ in \cref{endGFBF} (i.e. $ \phi_0 $ is a nontrivial solution to \cref{SNLS}), then $ \phi^1 = \tilde \phi^{1} = \phi^0 $ and the GFDN-BF will converge after one step. Moreover, we have the following result about the accuracy of the GFDN-BF scheme, which will imply that the error is independent of the time step size $ \tau $ we choose. 
	\begin{proposition}\label{independence of time step size}
		Suppose that $ \omega>\omega_0 \geq 0 $. If the GFDN-BF scheme converges, i.e. there exits $ \phi^\infty \in H^1(\R^d) $ such that $ \phi^{n} \rightarrow \phi^\infty $ in $ H^1(\R^d) $ as $ n \rightarrow \infty $, then $ \phi^\infty \in \mathcal{F}_\omega $.  
	\end{proposition}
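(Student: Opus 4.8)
The plan is to pass to the limit $n\to\infty$ in the scheme \cref{GFBF}--\cref{endGFBF} and show that the limit is a nontrivial solution of \cref{SNLS}. First I would record that every iterate lies on the Nehari manifold: by the normalization \cref{endGFBF} together with \cref{lambda}, one has $\phi^n\in\mathcal N_\omega$, hence $I_\omega(\phi^n)=0$ for all $n$. Since $I_\omega\in C^1(H^1(\R^d);\R)$ and $\phi^n\to\phi^\infty$ in $H^1(\R^d)$, continuity yields $I_\omega(\phi^\infty)=0$. To see $\phi^\infty\ne 0$, I would combine \cref{boundedaway} with the Sobolev embedding $H^1(\R^d)\hookrightarrow L^{2\alpha+2}(\R^d)$: the $H^1$-convergence forces $\|\phi^n\|_{L^{2\alpha+2}}\to\|\phi^\infty\|_{L^{2\alpha+2}}$, and since $\|\phi^n\|_{L^{2\alpha+2}}>\delta$ we get $\|\phi^\infty\|_{L^{2\alpha+2}}\ge\delta>0$. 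Thus $\phi^\infty\in\mathcal N_\omega$.

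Next I would pass to the limit in the elliptic step \cref{GFBF}, written as the operator equation $B_\tau\tilde\phi^{n+1}=F(\phi^n)$ with $B_\tau:=I+\tau(-\Delta+\omega)$ and $F(\phi):=\phi-\tau V\phi+\tau|\phi|^{2\alpha}\phi$. The operator $B_\tau$ is coercive on $H^1(\R^d)$ (as $\tau,\omega>0$), hence an isomorphism onto $H^{-1}(\R^d)$ by Lax--Milgram, while $F$ is continuous from $H^1(\R^d)$ to $H^{-1}(\R^d)$ by the regularity of $S_\omega,I_\omega$ recalled in \cref{sec:setting}. Consequently $\tilde\phi^{n+1}=B_\tau^{-1}F(\phi^n)\to B_\tau^{-1}F(\phi^\infty)=:\tilde\phi^\infty$ in $H^1(\R^d)$, and the limit satisfies
\begin{equation*}
	\tilde\phi^\infty+\tau(-\Delta+\omega)\tilde\phi^\infty=\phi^\infty-\tau V\phi^\infty+\tau|\phi^\infty|^{2\alpha}\phi^\infty.
\end{equation*}
I would then rule out $\tilde\phi^\infty=0$: if it vanished, $F(\phi^\infty)=0$, which is exactly \cref{eq:123} with $\phi^n$ replaced by $\phi^\infty$, and testing against $\phi^\infty$ while using $I_\omega(\phi^\infty)=0$ as in the proof of \cref{well-definedness of GFBF} would force $\phi^\infty=0$, a contradiction. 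Hence $\tilde\phi^\infty\ne0$, so $\lambda_{n+1}=\|\phi^{n+1}\|_{L^2}/\|\tilde\phi^{n+1}\|_{L^2}\to\lambda^\infty:=\|\phi^\infty\|_{L^2}/\|\tilde\phi^\infty\|_{L^2}\in(0,\infty)$ and $\phi^\infty=\lambda^\infty\tilde\phi^\infty$.

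The decisive step, which I expect to be the main obstacle, is to prove $\lambda^\infty=1$, since only then does the limit equation reduce to \cref{SNLS}. I would substitute $\tilde\phi^\infty=\phi^\infty/\lambda^\infty$ into the displayed equation and test against $\phi^\infty$ in $L^2(\R^d)$; the left-hand side becomes $Q/\lambda^\infty$ with $Q:=\|\phi^\infty\|_{L^2}^2+\tau(\|\nabla\phi^\infty\|_{L^2}^2+\omega\|\phi^\infty\|_{L^2}^2)$. On the right-hand side, the Nehari identity $I_\omega(\phi^\infty)=0$ lets me replace $\int_{\R^d}|\phi^\infty|^{2\alpha+2}\dx$ by $\|\nabla\phi^\infty\|_{L^2}^2+\int_{\R^d}V|\phi^\infty|^2\dx+\omega\|\phi^\infty\|_{L^2}^2$, after which the potential terms cancel and the right-hand side collapses to exactly $Q$. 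Since $Q>0$, this gives $\lambda^\infty=1$, whence $\tilde\phi^\infty=\phi^\infty$ and the limit equation becomes $\tau(-\Delta\phi^\infty+V\phi^\infty+\omega\phi^\infty-|\phi^\infty|^{2\alpha}\phi^\infty)=0$; dividing by $\tau$ shows $\phi^\infty\in\mathcal F_\omega$. Beyond this cancellation, the only points needing care are the coercivity of $B_\tau$ and the continuity of $F$ into $H^{-1}(\R^d)$, both of which are routine.
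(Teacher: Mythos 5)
Your proof is correct and rests on the same key mechanism as the paper's: test the renormalized scheme against the iterate/limit and invoke the Nehari identity $I_\omega=0$ so that the potential and nonlinear terms cancel, leaving a strictly positive quadratic form that forces the normalization constant to equal $1$. The only difference is organizational --- the paper establishes $\lambda_{n+1}\to 1$ directly from the discrete equation (writing $\phi^{n+1}=\phi^n+\delta_n$ and testing against $\phi^n$) before passing to the limit, whereas you first pass to the limit via the solution operator $B_\tau^{-1}F$ and then extract $\lambda^\infty=1$ from the limit equation; this is a reordering of the same steps rather than a genuinely different route.
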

	\begin{proof}
		First note that by \cref{boundedaway} and $ \phi^n \in \mathcal{N}_\omega $, one has $ \| \phi^\infty \|_{H^1(\R^d)} = \lim_{n} \| \phi^n \|_{H^1(\R^d)} > 0 $. Let $ \phi^{n+1} = \phi^n + \delta_n $ with $ \delta_n \rightarrow 0 $ in $ H^1(\R^d) $. Note that $ \phi^{n+1} = \lambda_{n+1} \tilde \phi^{n+1} $ and multiply $ -\lambda_{n+1} $ on both sides of \cref{GFBF} to obtain
		\begin{equation}\label{eq:eq}
			\frac{\lambda_{n+1}-1}{\tau} \phi^n - \frac{\delta_n}{\tau} 
			= (-\Delta + \omega) \delta_n + (1-\lambda_{n+1}) (-\Delta + \omega) \phi^n + \lambda_{n+1} ( - \Delta \phi^n + \omega \phi^n + V \phi^n - |\phi^n|^{2\alpha} \phi^n). 
		\end{equation}
		Multiplying both sides of \cref{eq:eq} by $ \phi^n $, integrating over $ \R^d $ and noting that $ I_\omega(\phi^n) = 0 $ to obtain
		\begin{equation*}
			\frac{\lambda_{n+1}-1}{\tau} \| \phi^n \|_{L^2(\R^d)}^2 - \frac{1}{\tau} \lrang{\delta_n}{\phi^n}
			= (1-\lambda_{n+1})\left(\| \nabla \phi^n \|_{L^2(\R^d)}^2 + \omega \| \phi^n \|_{L^2(\R^d)}^2 \right) + \lrang{\nabla \delta_n}{\nabla \phi^n} + \omega \lrang{\delta_n}{\phi^n}. 
		\end{equation*}
		It follows that
		\begin{equation}\label{eq:eq2}
			\lim_{n \rightarrow \infty} \frac{\lambda_{n+1}-1}{\tau}  \left[ \| \nabla \phi^n \|_{L^2(\R^d)}^2 + \left(\omega+\frac{1}{\tau} \right) \| \phi^n \|_{L^2(\R^d)}^2 \right] = 0.  
		\end{equation}
		Since $ \phi^n \rightarrow \phi^\infty \neq 0 $ in $ H^1(\R^d) $, one has $ \lambda_{n} \rightarrow 1 $. Let $ n \rightarrow \infty $ in \cref{eq:eq} and then one has
		\begin{equation*}
			0 = -\Delta \phi^\infty + V \phi^\infty + \omega \phi^\infty - |\phi^\infty|^{2 \alpha} \phi^\infty \text{ in } H^{-1}(\R^d), 
		\end{equation*}
		which concludes the proof. 
	\end{proof}
	
	\begin{remark}
		As a consequence of \cref{independence of time step size}, if $ \phi^{n+1} = \phi^n $ for some $ n $ in the GFDN-BF scheme \cref{GFBF}-\cref{endGFBF}, then $ \phi^n \in \mathcal{F}_\omega $, which implies $ \phi^n $ will solve \cref{SNLS} exactly without any error term depending on $ \tau $. Note that this is different from what is observed for least energy ground state, where the Lagrangian multiplier must be involved for such property to hold \cite{liu2021}. 
	\end{remark}
	
	Then we show that the GFDN-BF is positivity preserving. 
	
	\begin{proposition}\label{prop:positivity_preserving}
		Suppose that $ \omega > \omega_0 \geq 0 $. If $ \phi_0 \geq 0 $ in \cref{endGFBF}, then $ \phi^n \geq 0 $ for all $ n \geq 0 $. 
	\end{proposition}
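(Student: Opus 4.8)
The plan is to argue by induction on $n$, reducing the whole claim to a maximum-principle statement for the backward-Euler elliptic solve. The base case $n=0$ is immediate, since $\phi^0 = \phi_0 \geq 0$ by hypothesis. For the induction step I assume $\phi^n \geq 0$ (and recall $\phi^n \in \mathcal{N}_\omega$, so in particular $\phi^n \neq 0$) and must produce $\phi^{n+1} \geq 0$. Because $\phi^{n+1} = \lambda_{n+1}\tilde\phi^{n+1}$ with $\lambda_{n+1} > 0$ — it is a positive power of the ratio of the nonnegative quantity $I_\omega^\ast(\tilde\phi^{n+1})$ over the positive denominator $\|\tilde\phi^{n+1}\|_{L^{2\alpha+2}}^{2\alpha+2}$, which is legitimate since $\tilde\phi^{n+1}\neq 0$ by \cref{well-definedness of GFBF} — the normalization step preserves sign. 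Hence it suffices to show $\tilde\phi^{n+1} \geq 0$.

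Next I would recast \cref{GFBF} as the elliptic equation
\[
\left(\frac{1}{\tau} - \Delta + \omega\right)\tilde\phi^{n+1} = \left(\frac{1}{\tau} - V + |\phi^n|^{2\alpha}\right)\phi^n =: f^n,
\]
and observe that the right-hand side $f^n$ is nonnegative. Indeed, by the induction hypothesis $\phi^n \geq 0$, while $1/\tau > 0$, $|\phi^n|^{2\alpha} \geq 0$, and $-V \geq 0$. This last point is precisely where the non-positivity of the potential enters: it guarantees $-V\phi^n \geq 0$ (in the 1D delta case $-V = Z\delta$ is a nonnegative measure and $H^1$ functions are continuous, so $f^n$ still pairs nonnegatively against nonnegative test functions).

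The core of the argument is the positivity of the resolvent of $-\Delta + (\omega + 1/\tau)$, which I would establish by testing against the negative part. Set $w := \max\{-\tilde\phi^{n+1},0\} \in H^1(\R^d)$, using that $\tilde\phi^{n+1}\in H^1(\R^d)$ by \cref{well-definedness of GFBF} and that $H^1$ is stable under taking negative parts with $\nabla w = -\chi_{\{\tilde\phi^{n+1}<0\}}\nabla\tilde\phi^{n+1}$. Pairing the equation with $w$ and integrating by parts, and using $\lrang{\tilde\phi^{n+1}}{w} = -\|w\|_{L^2}^2$ together with $\lrang{\nabla\tilde\phi^{n+1}}{\nabla w} = -\|\nabla w\|_{L^2}^2$, gives
\[
-\left(\frac{1}{\tau} + \omega\right)\|w\|_{L^2}^2 - \|\nabla w\|_{L^2}^2 = \lrang{f^n}{w}.
\]
The left-hand side is $\leq 0$ since $\omega + 1/\tau > 0$, whereas the right-hand side is $\geq 0$ since $f^n \geq 0$ and $w \geq 0$. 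Both sides therefore vanish, forcing $\|w\|_{L^2} = 0$, i.e. $w=0$ and $\tilde\phi^{n+1} \geq 0$, which closes the induction.

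I expect the only delicate points to be bookkeeping rather than conceptual: checking that $w$ is an admissible test function (which follows from the $H^1$ regularity of \cref{well-definedness of GFBF} and the chain rule for the negative part) and making sense of the pairing $\lrang{V\phi^n}{w}$ when $V$ lies merely in $L^q + L^\infty$ or is a measure. In either regime the pairing is well defined and, crucially, carries the correct sign $\lrang{-V\phi^n}{w} \geq 0$, because $-V$ acts as a nonnegative functional on the nonnegative function $\phi^n w$. Everything else is the standard Stampacchia truncation, and no smallness assumption on $\tau$ is required, so the conclusion holds for every $\tau > 0$.
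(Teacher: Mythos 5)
Your proof is correct and follows essentially the same route as the paper: rewrite the backward--forward Euler step as the elliptic equation $-\Delta\tilde\phi^{n+1}+(\tfrac{1}{\tau}+\omega)\tilde\phi^{n+1}=(\tfrac{1}{\tau}-V+|\phi^n|^{2\alpha})\phi^n$ with nonnegative right-hand side (using $\phi^n\geq 0$ and $V\leq 0$) and conclude $\tilde\phi^{n+1}\geq 0$ by the weak maximum principle, the normalization with $\lambda_{n+1}>0$ then preserving the sign. The only difference is that the paper simply cites the weak maximum principle (Theorem 8.1 of Gilbarg--Trudinger), whereas you supply its proof via the Stampacchia truncation with the negative part as test function; that is a valid, self-contained substitute for the citation.
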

	
	\begin{proof}
		One notes that \cref{GFBF} can be rewritten as 
		\begin{equation*}
			-\Delta \tilde \phi^{n+1} + \left( \frac{1}{\tau} + \omega \right) \tilde \phi^{n+1} = \left( \frac{1}{\tau} - V + |\phi^n|^{2\alpha} \right) \phi^n. 
		\end{equation*}
		If $ \phi^n \geq 0 $, then it follows from the weak maximum principle (see, e.g., Theorem 8.1 of \cite{elliptic}) that $ \tilde \phi^{n+1} \geq 0 $, which implies that $ \phi^{n+1} \geq 0 $. 
	\end{proof}
	
	Most importantly, motivated by the very recent paper \cite{liu2022}, we are able to show that the GFDN-BF diminishes the action unconditionally. 
	\begin{proposition}\label{prop:energy_stable}
		Suppose that $ \omega > \omega_0 \geq 0 $. For any $ \tau > 0 $, let $ \phi^n (n \geq 0)$ be obtained from the GFDN-BF \cref{GFBF}-\cref{endGFBF}. Then one has
		\begin{equation*}
			S_\omega(\phi^{n+1}) \leq S_\omega(\phi^n), \quad  n \geq 0. 
		\end{equation*}
	\end{proposition}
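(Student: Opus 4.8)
The plan is to reduce the statement to a monotonicity of $I_\omega^\ast$ along the iteration, and then to exploit the backward--forward structure of \cref{GFBF}. First I would record that every iterate stays on the Nehari manifold: since $\phi^0 \in \mathcal{N}_\omega$ and the normalization \cref{endGFBF} is exactly the rescaling of \cref{lambda}, one has $\phi^n \in \mathcal{N}_\omega$ for all $n$. Consequently $I_\omega(\phi^n) = 0$, so by \cref{S1S2} one gets $S_\omega(\phi^n) = \frac{\alpha}{\alpha+1} I_\omega^\ast(\phi^n) = \frac{\alpha}{\alpha+1}\|\phi^n\|_{L^{2\alpha+2}}^{2\alpha+2}$, and the claim $S_\omega(\phi^{n+1}) \le S_\omega(\phi^n)$ becomes equivalent to $I_\omega^\ast(\phi^{n+1}) \le I_\omega^\ast(\phi^n)$. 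Writing $\phi^{n+1} = \lambda_{n+1}\tilde\phi^{n+1}$ and using the explicit formula for $\lambda_{n+1}$ from \cref{lambda}, this is the same as $\max_{\lambda>0} S_\omega(\lambda \tilde\phi^{n+1}) \le S_\omega(\phi^n)$, i.e. the maximum of the action along the ray through $\tilde\phi^{n+1}$ must not exceed $S_\omega(\phi^n)$.

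Next I would extract the dissipation of the unnormalized step. Scheme \cref{GFBF} is precisely the convex--concave splitting of the gradient flow of $S_\omega$: the implicitly treated operator $-\Delta + \omega$ defines a coercive, hence convex, quadratic form (here $\omega > \omega_0 \ge 0$ enters, together with \cref{lambda} for the ray structure), while the explicitly treated terms, the potential energy $\int_{\R^d} V|\phi|^2$, which is concave because $V \le 0$ by assumption \ref{rq3}, and $-\frac{1}{\alpha+1}\|\phi\|_{L^{2\alpha+2}}^{2\alpha+2}$, which is concave, together form a concave functional. Testing \cref{GFBF} with $\tilde\phi^{n+1} - \phi^n$ and inserting the tangent-line (Bregman) inequalities for the convex and concave parts yields an exact dissipation identity of the form $S_\omega(\tilde\phi^{n+1}) = S_\omega(\phi^n) - D$, where $D \ge 0$ collects $\frac{2}{\tau}\|\tilde\phi^{n+1} - \phi^n\|_{L^2}^2$ together with the quadratic remainder $\int_{\R^d}(|\nabla(\tilde\phi^{n+1}-\phi^n)|^2 + \omega|\tilde\phi^{n+1}-\phi^n|^2)$ and the concave remainder.

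The subtle point, and the main obstacle, is that this dissipation controls $\tilde\phi^{n+1}$, not $\phi^{n+1}$: by \cref{lambda} the renormalization sends $\tilde\phi^{n+1}$ to the maximizer of $S_\omega$ along its ray, so it \emph{raises} the action by an overshoot $O := S_\omega(\phi^{n+1}) - S_\omega(\tilde\phi^{n+1}) \ge 0$. Hence the mere energy decrease $S_\omega(\tilde\phi^{n+1}) \le S_\omega(\phi^n)$ is \emph{not} sufficient, and since $S_\omega(\phi^{n+1}) = S_\omega(\phi^n) - D + O$, the entire content is the sharp inequality $O \le D$. I would establish it by expressing both $O$ and $D$ in closed form through $I_\omega^\ast(\tilde\phi^{n+1})$, $\|\tilde\phi^{n+1}\|_{L^{2\alpha+2}}^{2\alpha+2}$, and $\|\phi^n\|_{L^{2\alpha+2}}^{2\alpha+2}$ via the explicit $\lambda_{n+1}$, and then closing the estimate with a Young/AM--GM inequality, using the positivity $\phi^n, \tilde\phi^{n+1} \ge 0$ from \cref{prop:positivity_preserving} and $V \le 0$ to absorb the cross terms that arise when the implicitly treated $-\Delta + \omega$ is matched against the explicitly treated potential. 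Carrying out this quantitative comparison of overshoot against dissipation is where the real work lies; the reduction and the splitting identity are otherwise routine bookkeeping.
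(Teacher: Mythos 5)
Your setup is sound, and you correctly put your finger on the crux: the unnormalized convex--concave dissipation $S_\omega(\tilde\phi^{n+1})\le S_\omega(\phi^n)-D$ is not enough, because the normalization constant $\lambda_{n+1}$ of \cref{endGFBF} is precisely the maximizer of $S_\omega$ along the ray through $\tilde\phi^{n+1}$ and therefore \emph{raises} the action by an overshoot $O\ge 0$. But that crux is exactly what you do not prove, and the plan you sketch for it does not go through. First, $D$ is not expressible through $I_\omega^\ast(\tilde\phi^{n+1})$, $\|\tilde\phi^{n+1}\|_{L^{2\alpha+2}}^{2\alpha+2}$ and $\|\phi^n\|_{L^{2\alpha+2}}^{2\alpha+2}$: it contains $\frac{2}{\tau}\|\tilde\phi^{n+1}-\phi^n\|_{L^2}^2$, the quadratic remainder of $-\Delta+\omega$, and the concave remainders, none of which are functions of those three scalars. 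Second, if one does reduce to scalars, writing $A=I_\omega^\ast(\tilde\phi^{n+1})$, $B=\|\tilde\phi^{n+1}\|_{L^{2\alpha+2}}^{2\alpha+2}$, $C=\|\phi^n\|_{L^{2\alpha+2}}^{2\alpha+2}$, the claim $S_\omega(\phi^{n+1})\le S_\omega(\phi^n)$ is equivalent (via \cref{S1S2} and the explicit $\lambda_{n+1}$ of \cref{lambda}) to $A^{\alpha+1}\le BC^\alpha$, i.e.\ to a \emph{lower} bound on the weighted geometric mean $B^{1/(\alpha+1)}C^{\alpha/(\alpha+1)}$; Young/AM--GM only provides upper bounds on such geometric means, so it cannot close this step. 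The information that actually forces $A^{\alpha+1}\le BC^\alpha$ is the scheme equation \cref{GFBF} itself, which your final step no longer invokes.

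The paper's proof never splits off the overshoot. It multiplies \cref{GFBF} by $\lambda_{n+1}$ so that the implicit part becomes $\phi^{n+1}$ itself, tests with $2\overline{(\phi^{n+1}-\lambda_{n+1}\phi^n)}$, uses $\omega>0$, $V\le0$ and the identity $\mathrm{Re}[2z_1\overline{(z_1-z_2)}]=|z_1|^2-|z_2|^2+|z_1-z_2|^2$ to get the signs, and then applies Young's inequality with the weights $\frac{\alpha}{\alpha+1},\frac{1}{\alpha+1}$ to the single cross term $\int_{\R^d}|\phi^n|^{2\alpha}|\phi^{n+1}|^2\,\rmd\vx$; everything collapses to $0\le -S_\omega(\phi^{n+1})+\lambda_{n+1}^2 I_\omega(\phi^n)+\frac{\alpha}{\alpha+1}\|\phi^n\|_{L^{2\alpha+2}}^{2\alpha+2}$, and $I_\omega(\phi^n)=0$ finishes it. In other words, the normalization has to be woven into the energy estimate \emph{before} testing, not absorbed afterwards as an overshoot-versus-dissipation comparison. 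Your proposal contains the right ingredients and a correct diagnosis of the difficulty, but as written it has a genuine gap at its central inequality $O\le D$, and the route you indicate for closing it would fail.
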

	
	\begin{proof}
		Note that $ \phi^{n+1} = \lambda_{n+1} \tilde \phi^{n+1} $. Multiplying both sides of \cref{GFBF} by $ \lambda_{n+1} $, one has for $ n \geq 0 $, 
		\begin{equation}\label{eq:add}
			\frac{\phi^{n+1}-\lambda_{n+1} \phi^n}{\tau} = \Delta \phi^{n+1} - \omega \phi^{n+1} - \lambda_{n+1} V  \phi^{n} +  \lambda_{n+1} |\phi^n|^{2\alpha} \phi^{n}. 
		\end{equation}
		Multiplying both sides of \cref{eq:add} by $ 2\overline{(\phi^{n+1} - \lambda_{n+1} \phi^n)} $, taking the real part and integrating over $ \R^d $, one has
		\begin{equation*}
			\begin{aligned}
				\frac{2}{\tau} \| \phi^{n+1} - \lambda_{n+1} \phi^n \|_{L^2}^2 
				&\leq - \| \nabla \phi^{n+1} \|_{L^2}^2 - \omega \| \phi^{n+1} \|_{L^2}^2 - \int_{\R^d} V |\phi^{n+1}|^2 \rmd \vx \\
				&\quad + \lambda_{n+1}^2 \| \nabla \phi^n \|_{L^2}^2  + \omega \lambda_{n+1}^2 \| \phi^{n} \|_{L^2}^2 + \lambda_{n+1}^2 \int_{\R^d} V |\phi^{n}|^2 \rmd \vx \\
				&\quad + \int_{\R^d} |\phi^n|^{2\alpha}|\phi^{n+1}|^2 \rmd \vx - \lambda_{n+1}^2 \| \phi^n \|_{L^{2\alpha+2}}^{2\alpha+2}, 
			\end{aligned}
		\end{equation*}
		where we use the fact that $ \omega > 0 $, $ V \leq 0 $ and the identity
		\begin{equation*}
			\text{Re} \left[ 2z_1\overline{(z_1-z_2)} \right] = |z_1|^2 - |z_2|^2 + |z_1-z_2|^2, \quad z_1, z_2 \in \C. 
		\end{equation*}
		By Young's inequality, 
		\begin{equation*}
			\int_{\R^d} |\phi^n|^{2\alpha}|\phi^{n+1}|^2 \rmd \vx \leq \int_{\R^d} \left(\frac{\alpha}{\alpha+1}|\phi^n|^{2\alpha+2} + \frac{1}{\alpha+1}|\phi^{n+1}|^{2\alpha+2} \right) \rmd \vx. 
		\end{equation*}
		It follows that
		\begin{equation*}
			\begin{aligned}
				0 \leq \frac{2}{\tau} \| \phi^{n+1} - \lambda_{n+1} \phi^n \|_{L^2}^2  
				&\leq -S_\omega(\phi^{n+1}) + \lambda_{n+1}^2 I_\omega(\phi^n) + \frac{\alpha}{\alpha+1} \| \phi^n \|_{L^{2\alpha+2}}^{2\alpha+2}. 
			\end{aligned}
		\end{equation*}
		Since $ \phi^n \in \mathcal{N}_\omega $, one has $ I_\omega(\phi^n) = 0 $ and by \cref{S1S2}, 
		\begin{equation*}
			S_\omega(\phi^n) = \frac{\alpha}{\alpha+1} \| \phi^n \|_{L^{2\alpha+2}}^{2\alpha+2}. 
		\end{equation*}
		Hence, 
		\begin{equation*}
			\begin{aligned}
				S_\omega(\phi^{n+1}) \leq S_\omega(\phi^n). 
			\end{aligned}
		\end{equation*}
	\end{proof}
	
	\begin{remark}\label{rem:modify}
		If $ \omega_0 < 0 $ or assumption \ref{rq3'} instead of \ref{rq3} is assumed, we can modify \cref{GFBF} as
		\begin{equation}
			\frac{\tilde \phi^{n+1}-\phi^n}{\tau} = \Delta \tilde \phi^{n+1} - (\omega + \theta) \tilde \phi^{n+1} - (V - \theta) \phi^{n} + |\phi^n|^{2\alpha} \phi^{n} \text{ on } \R^d,  \quad n \geq 0,  \label{GFBF_modified}
		\end{equation}
		where $ \theta $ is a parameter satisfying $ \theta + \omega_0 > 0 $ and $ V - \theta \leq 0 $. The same properties as \cref{well-definedness of GFBF,independence of time step size,prop:positivity_preserving,prop:energy_stable} hold for this scheme. 
	\end{remark}
	
	\begin{remark}\label{rem:generalization}
		Although, in this paper, we only consider the single power nonlinearity, the GFDN-BF scheme is also applicable to some combined nonlinearities such as the combined power-type nonlinearity: $ \beta_1 |\phi|^{2\alpha_1}\phi - |\phi|^{2\alpha_2} \phi $ with $ 0 < \alpha_1 < \alpha_2 $ and $ \beta_1 \in \R $ (note that we require the leading nonlinear term to be focusing). The main difference is that at the normalization step \cref{endGFBF}, the explicit form of $ \lambda_{n+1} $ is no longer valid. To obtain $ \lambda_{n+1} $, one has to solve an algebraic equation of the form: $ \lambda^{2\alpha_2} - a\beta_1 \lambda^{2\alpha_1} = b $, where $ a, b > 0 $. It is easy to see that there always exists a positive solution to this algebraic equation since $ \alpha_2 > \alpha_1 $. If $ \beta_1 < 0 $, this solution is also unique and the GFDN-BF is still unconditionally action diminishing. If $ \beta_1>0 $, one may choose the one that minimizes the action, and in order for the GFDN-BF to diminish the action at each step, the time step size $ \tau $ need to be suitably small. 
	\end{remark}
	
	For the comparison purpose, we also introduce the following three schemes modified from corresponding schemes used for computing the least energy ground state. Since \cref{endGFBF} is shared by all of these schemes, we shall only show the counterpart of \cref{GFBF} for brevity. 
	\begin{enumerate}[label = \roman*)]
		\item GFDN with linearized backward Euler discretization (GFDN-BE) \cite{bao2004}: 
		\begin{equation*}
			\frac{\tilde \phi^{n+1}-\phi^n}{\tau} = \Delta \tilde \phi^{n+1} - V \tilde \phi^{n+1} - \omega \tilde \phi^{n+1} +  |\phi^n|^{2\alpha} \tilde \phi^{n+1} \text{ on } \R^d,  \quad n \geq 0. \label{GFBE}
		\end{equation*}
		
		\item Projected GFDN with backward-forward Euler discretization (PGFDN-BF) \cite{cai2021, liu2021}: 
		\begin{equation*}
			\frac{\tilde \phi^{n+1}-\phi^n}{\tau} = \Delta \tilde \phi^{n+1} - \omega \tilde \phi^{n+1} - V \phi^{n} +  |\phi^n|^{2\alpha} \phi^{n} + \mu_{\phi^n} \phi^n \text{ on } \R^d,  \quad n \geq 0, \label{GFLM}
		\end{equation*}
		where
		\begin{equation}\label{eq:mu_phi_PGFDN}
			\mu_{\phi^n}= -\frac{\| H_\omega \phi^n \|^2_{L^2} - (\alpha+2) \lrang{H_\omega \phi^n}{|\phi^n|^{2\alpha} \phi^n} + (\alpha+1) \| \phi^n \|_{L^{4\alpha + 2}}^{4\alpha + 2}}{\alpha  \| \phi^n \|_{L^{2\alpha+2}}^{2\alpha+2}}. 
		\end{equation}
		
		\item GFDN with time splitting (GFDN-TS) \cite{bao2004}: 
		\begin{equation*}
			\tilde \phi^{n+1} = S_1^{\tau/2} S_2^\tau S_1^{\tau/2} \phi^n, \quad n \geq 0, 
		\end{equation*}
		where $ S_1^t = e^{t \Delta} $ and $ S_2^t $ is the flow associated with
		\begin{equation}\label{sp2}
			\partial_t \phi = - V \phi - \omega \phi + |\phi|^{2\alpha} \phi \text{ on } \R^d, \quad 0<t<\tau.  
		\end{equation}
		Actually, we are able to solve \cref{sp2} exactly. Then $ S_2 $ is explicitly given by 
		\begin{equation*}
			[S_2^\tau \phi](\vx) = 
			\left\{
			\begin{aligned}
				&\left[1 - \alpha \tau |\phi(\vx)|^{2\alpha}\right]^{-\frac{1}{2\alpha}} \phi(\vx), & V(\vx) + \omega = 0, \\
				&\left[\frac{(V(\vx) + \omega)e^{-2\alpha\tau(V(\vx)+\omega)}}{V(\vx)+\omega - (1 - e^{-2\alpha\tau(V(\vx)+\omega)})|\phi(\vx)|^{2\alpha}}\right]^{\frac{1}{2\alpha}} \phi(\vx), & V(\vx) + \omega \neq 0. 
			\end{aligned}
			\right.
		\end{equation*}
	\end{enumerate}
	We remark that for the PGFDN-BF scheme, an immediate drawback is that for $ \mu_{\phi^n} $ to be well-defined, $ \phi^n $ is supposed to be in $ H^2 $ according to \cref{eq:mu_phi_PGFDN}, which is not satisfied when $ V $ is the delta potential or the inverse power potential. 
	
	\subsection{Full-discretization}\label{sec:full_discretization}
	We shall introduce in this subsection the full discretization of the GFDN-BF scheme. 
	
	Since the original problem is posed on the whole space $ \R^d $, we shall first truncate the whole space problem onto a bounded domain $ \Omega $ (usually chosen as $ \Pi_{j=1}^d [a_j, b_j] $) with homogeneous Dirichlet boundary condition or periodic boundary condition. For simplicity, we only show the truncated GFDN-BF scheme with homogeneous Dirichlet boundary condition. All the functionals and the Nehari manifold previously defined can be defined similarly by replacing $ \R^d $ with $ \Omega $ and we shall use the same notation for them. Then the GFDN-BF scheme  reads: 
	\begin{align}
		&\frac{\tilde \phi^{n+1}-\phi^n}{\tau} = \Delta \tilde \phi^{n+1} - \omega \tilde \phi^{n+1} - V \phi^{n} +  |\phi^n|^{2\alpha} \phi^{n} \text{ on } \Omega,  \quad n \geq 0 \label{GFBF_t}\\ 
		&\tilde \phi^{n+1}(\vx) = 0, \quad \vx \in \partial \Omega, \label{GFBF_t_end}\\
		&\phi^{n+1} = \lambda_{n+1} \tilde \phi^{n+1}, \quad \lambda_{n+1} = \left( \frac{I_\omega^\ast(\tilde \phi^{n+1}) }{  \| \tilde \phi^{n+1} \|^{2\alpha+2}_{L^{2\alpha+2}(\Omega)}} \right)^{\frac{1}{2\alpha}}, \quad \phi^0(\vx) = \phi_0(\vx), \quad \vx \in \Omega, \label{GFBF_t_end2}
	\end{align}
	with $ \phi_0 \in \mathcal{N}_\omega $. 
	
	Note that in \cref{GFBF_t}-\cref{GFBF_t_end}, one only need to solve a linear elliptic equation with constant coefficient (remaining the same at each step) and potentially singular source. 
	
	In practical computation, one shall choose the spatial discretization properly according to the potential function under consideration. Here, we suggest three standard spatial discretization methods: finite difference method (FD), finite element method (FE) and sine pseudospectral method (SP). Although our model includes many singular potentials as shown before, we will not go into detail to talk about specific methods for different types of singularities but leave it as our future work. In this paper, we only use FE method to deal with them. Besides, for the FE and FD method, instead of using a uniform grid, one may use a fine mesh around the singularity of the potential and use a coarse mesh where the solution is exponentially decaying. Considering that FD, FE and SP are all quite standard, we will omit the detail of the numerical schemes for brevity. 
	
	\textbf{Finite difference method (FD) for $ C^2 $ potentials.}
	When the potential function $ V \in C^2(\R^d) $, one can use finite difference method for spatial discretization. At each step, a sparse linear system with the same coefficient matrix needs to be solved. 
	
	\textbf{Sine pseudospectral method (SP) for sufficiently smooth potentials.} 
	When the potential function is sufficiently smooth (e.g. the zero potential and the smooth potential in \cref{exmp:poten}), one can use SP method for spatial discretization. At each step, a constant coefficient linear system needs to be solved, which can be done efficiently by FFT. 
	
	\textbf{Finite element method (FE) for some singular potentials.} 
	Finite element method has the weakest regularity requirement and thus is suitable for the singular potentials such as the delta potential, well potential and inverse power potential in \cref{exmp:poten}. Similar to the FD method, a sparse linear system with the same coefficient matrix needs to be solved at each step. 
	
	It follows immediately that the linear system obtained by discretizing \cref{GFBF_t}-\cref{GFBF_t_end} with FE, FD or SP (if applicable) has a unique solution for any $ \tau > 0 $ when $ \omega>\omega_0\geq0 $.
	
	\section{Numerical results}\label{sec:numerical results}
	In this section, we will first compare the four semi-discretization schemes introduced in \cref{sec:time discretization}. Then we report some numerical results for the least action ground states with different potentials and in different spatial dimensions. Besides, we use our algorithm to verify some existing results and make some conjectures. Throughout this section, we equip $ \Omega $ with homogeneous Dirichlet boundary condition. In most of the cases, the initial data is chosen as 
	\begin{equation*}
		\phi_0 = \lambda_\omega(u_0) u_0 \in \mathcal{N}_\omega \quad \text{with} \quad u_0(\vx) = e^{-\frac{|\vx|^2}{2}}, \quad \vx \in \Omega, 
	\end{equation*}
	where $ \lambda_\omega $ is defined in \cref{eq:lambda}. However, to observe the symmetry breaking bifurcation, some shifted Gaussian type initial data are used (see \mbox{\cref{exmp:smooth}}). 
	\subsection{Comparison of different semi-discretization schemes}\label{sec:comparison}
	We shall compare the four semi-discretization schemes introduced in \cref{sec:time discretization} comprehensively including the computation time, the robustness for large time step size, the accuracy and the action decreasing property. 
	\begin{exmp}\label{exmp:comparison}
		We take $ d = 1 $ and $ V(\vx) \equiv 0 $, i.e, the non potential case in 1D. Note that, for this potential, the exact solution is known and the unique positive, radial, decreasing solution is given by \cref{exact_non_potential}. In this case, $ \omega_0 = 0 $. We solve the least action ground state of \cref{SNLS} with $ \alpha = 1 $ for different $ \omega = 0.1, 1, 10 $. We consider the following three cases: 
		\begin{description}
			\item[Case I.] $ \omega = 1 $, $ \Omega = [-32, 32] $, $ h = 2^{-4} $, $ \vep = 10^{-9} $, $ \tau = 1 $. 
			\item[Case II.] $ \omega = 0.1 $, $ \Omega = [-64, 64] $, $ h = 2^{-4} $, $ \vep = 10^{-9} $, $ \tau = 1 $.  
			\item[Case III.] $ \omega = 10 $, $ \Omega = [-16, 16] $, $ h = 2^{-4} $, $ \vep = 10^{-9} $, $ \tau = 1 $. 
		\end{description}
	\end{exmp}
	Since the potential is smooth, we use SP method for spatial discretization. In computation, we use discrete sine transform to solve the linear system in GFDN-BF and PGFDN-BF scheme, and use some iterative methods to solve the linear system in GFDN-BE. Since GFDN-TS is explicit, we do not need to solve any linear system. 
	The numerical results of the computation time, the iteration steps and the relative error of the GFDN-BF, GFDN-BE, PGFDN-BF and GFDN-TS are shown in Table \ref{table:comparison}, where the number of iteration steps is $ n $ if the stopping criteria is satisfied by $ \phi^n $ and the relative error of the numerical solution $ \phi^n $ is computed by
	\begin{equation*}
		\frac{\max |\phi^n-  \phi_{\text{exact}}|}{\max |\phi_{\text{exact}}|}. 
	\end{equation*}
	We also show the evolution of $ S_\omega $ in Case I for all the four schemes with different time step size $ \tau $ in \cref{comparison_diff_dt}. Then we can draw the following conclusion: 
	\begin{enumerate}
		\item GFDN-BF performs well in all the cases. The accuracy of the GFDN-BF scheme is independent of the time step size we choose, which is illustrated in \cref{independence of time step size} and observed in Table \ref{table:comparison}. It is very robust for extremely large time step size: the action is decreasing and the algorithm converges even for very large $ \tau $ as shown in \cref{comparison_diff_dt}. It is also very efficient since the linear system is obtained by discretizing a linear elliptic equation with constant coefficient and the coefficient remains the same at each step. This linear system will be sparse when choosing FE or FD method for spatial discretization. When using SP method for spatial discretization, it can also be solved efficiently by FFT. Note that the computational cost at each step is the same and independent of the time step size $ \tau $. Since one can observe from Table \ref{table:comparison} that larger time step size results in less iteration steps, this implies that we can choose sufficiently large $ \tau $ to make the GFDN-BF scheme converge fast. 
		
		\item GFDN-BE is good when time step size is small: $ S_\omega $ is decreasing and the accuracy is also independent of the time step size. However, it is not suitable for large time step and is much slower than GFDN-BF. It can not use large time step since the linear system may be singular when time step size is large. Small time step size will require much more iteration steps for the algorithm to converge. Since the linear system in the GFDN-BE scheme will be different at each step, it is usually solved by some iterative methods and thus the computation time of each step will also depend on the time step size. It seems difficult to compute the computational cost exactly but we can observe from the numerical results that it usually needs much more time than GFDN-BF scheme. 
		
		\item PGFDN-BF scheme show very similar behavior to GFDN-BF in all the cases in \cref{exmp:comparison}. However, as we have already mentioned when introducing this scheme, the major drawback of the PGFDN-BF scheme is the requirement of higher regularity of the solution and the potential function. To be precise, it will require the exact solution to be in $ H^2 $ and the potential function $ V $ to be in $ L^2 $, which is not satisfied when $ V $ is chosen as the delta potential or the inverse power potential. 
		
		\item The GFDN-TS differs from all the other three schemes in at least two aspects. Firstly, the accuracy of the solution depends on the time step size we choose and is of order $ \tau^2 $ as observed in Table \ref{table:comparison}. Secondly, it is a explicit scheme and the computation cost of each iteration mainly comes from the cost of FFT. As shown in Table \ref{table:comparison}, to obtain the same accuracy, it usually takes much more time than other schemes. 
	\end{enumerate}
	In summary, the GFDN-BF scheme is the best one in all the cases and thus we will only use GFDN-BF in our numerical experiments. Though not shown here, similar results can be observed when $ V $ is chosen as other potentials in \mbox{\cref{exmp:poten}}. 
	
	\begin{table}[H]
		\begin{center}
			\subfloat[Case I.]{
				\begin{tabular}{c"cccc}
					\thickhline
					Method              & $ \tau $ & CPU(s) & iteration steps & relative error of $\phi^n$ \\
					\thickhline
					\multirow{3}{*}{GFDN-BF} 
					& 0.01     & 0.32   & 1707            & 6.70E-10                         \\
					& 0.1      & 0.03   & 185             & 6.51E-10                              \\
					& 4        & 0.01   & 28              & 2.35E-09                         \\
					\thickhline
					\multirow{3}{*}{GFDN-BE} 
					& 0.01     & 1.58   & 1622            & 3.33E-09                             \\
					& 0.1      & 0.91   & 183             & 6.18E-10                               \\
					& 1        & -  & -              & -                             \\
					\thickhline
					\multirow{3}{*}{PGFDN-BF} 
					& 0.01     & 0.48   & 1706            & 6.70E-10                               \\
					& 0.1      & 0.05   & 183             & 6.56E-10                                \\
					& 4        & 0.01   & 23              & 1.71E-09                                \\
					\thickhline
					\multirow{3}{*}{GFDN-TS} 
					& 0.001    & 7.49   & 16926           & 1.55E-07                               \\
					& 0.01     & 0.71   & 1693            & 1.56E-05                                \\
					& 0.1      & 0.07   & 170             & 1.63E-03                            \\
					\thickhline
			\end{tabular}}
			
			\subfloat[Case II. ]{
				\begin{tabular}{c"cccc}
					\thickhline
					Method              & $ \tau $ & CPU(s) & iteration steps & relative error of $\phi^n$  \\
					\thickhline
					\multirow{3}{*}{GFDN-BF} 
					& 0.1      & 0.51   & 1455            & 2.10E-08                           \\
					& 1        & 0.05   & 160             & 1.96E-08                            \\
					& 8        & 0.01   & 38              & 2.90E-08                             \\
					\thickhline
					\multirow{3}{*}{GFDN-BE} 
					& 0.1      & 5.69   & 1454            & 2.08E-08                                \\
					& 1        & 4.79   & 156             & 2.09E-08                                \\
					& 8        & -   & -               & -                          \\
					\thickhline
					\multirow{3}{*}{PGFDN-BF} 
					& 0.1      & 0.74   & 1454            & 2.10E-08                           \\
					& 1        & 0.07   & 158             & 1.99E-08                              \\
					& 8        & 0.02   & 35              & 1.71E-08                              \\
					\thickhline
					\multirow{3}{*}{GFDN-TS} 
					& 0.01     & 10.37  & 14408           & 1.61E-07                              \\
					& 0.1      & 0.97   & 1442            & 1.56E-05                              \\
					& 1        & 0.1    & 145             & 1.63E-03                      	   \\
					\thickhline   
			\end{tabular}}
			
			\subfloat[Case III. ]{
				\begin{tabular}{c"cccc}
					\thickhline
					Method              & $ \tau $ & CPU(s) & iteration steps & relative error of $\phi^n$ \\
					\thickhline
					\multirow{3}{*}{GFDN-BF} 
					& 0.01     & 0.04   & 239             & 2.20E-11                                  \\
					& 0.1      & 0.01   & 52              & 2.46E-11                                    \\
					& 4        & 0.01   & 32              & 4.91E-10                              \\
					\thickhline
					\multirow{3}{*}{GFDN-BE} 
					& 0.01     & 0.35   & 215             & 3.27E-10                              \\
					& 0.1      & 0.4    & 43              & 1.87E-11                             \\
					& 1        & -   & -               & -                            \\
					\thickhline
					\multirow{3}{*}{PGFDN-BF} 
					& 0.01     & 0.05   & 237             & 2.06E-11                              \\
					& 0.1      & 0.01   & 45              & 2.17E-11                              \\
					& 4        & 0.01   & 25              & 3.28E-10                             \\
					\thickhline
					\multirow{3}{*}{GFDN-TS} 
					& 0.0001   & 7.78   & 22007           & 1.55E-07                            \\
					& 0.001    & 0.75   & 2201            & 1.56E-05                                \\
					& 0.01     & 0.08   & 220             & 1.61E-03                               \\
					\thickhline
			\end{tabular}}
		\end{center}
		\label{table:comparison}
		\caption{Comparison of different semi-discretization schemes for computing the least action ground states in \cref{exmp:comparison}. }
	\end{table}
	
	\begin{figure}[htbp]
		\centering\vspace{-0.5cm}
		\subfloat{\includegraphics[width=0.475\textwidth]{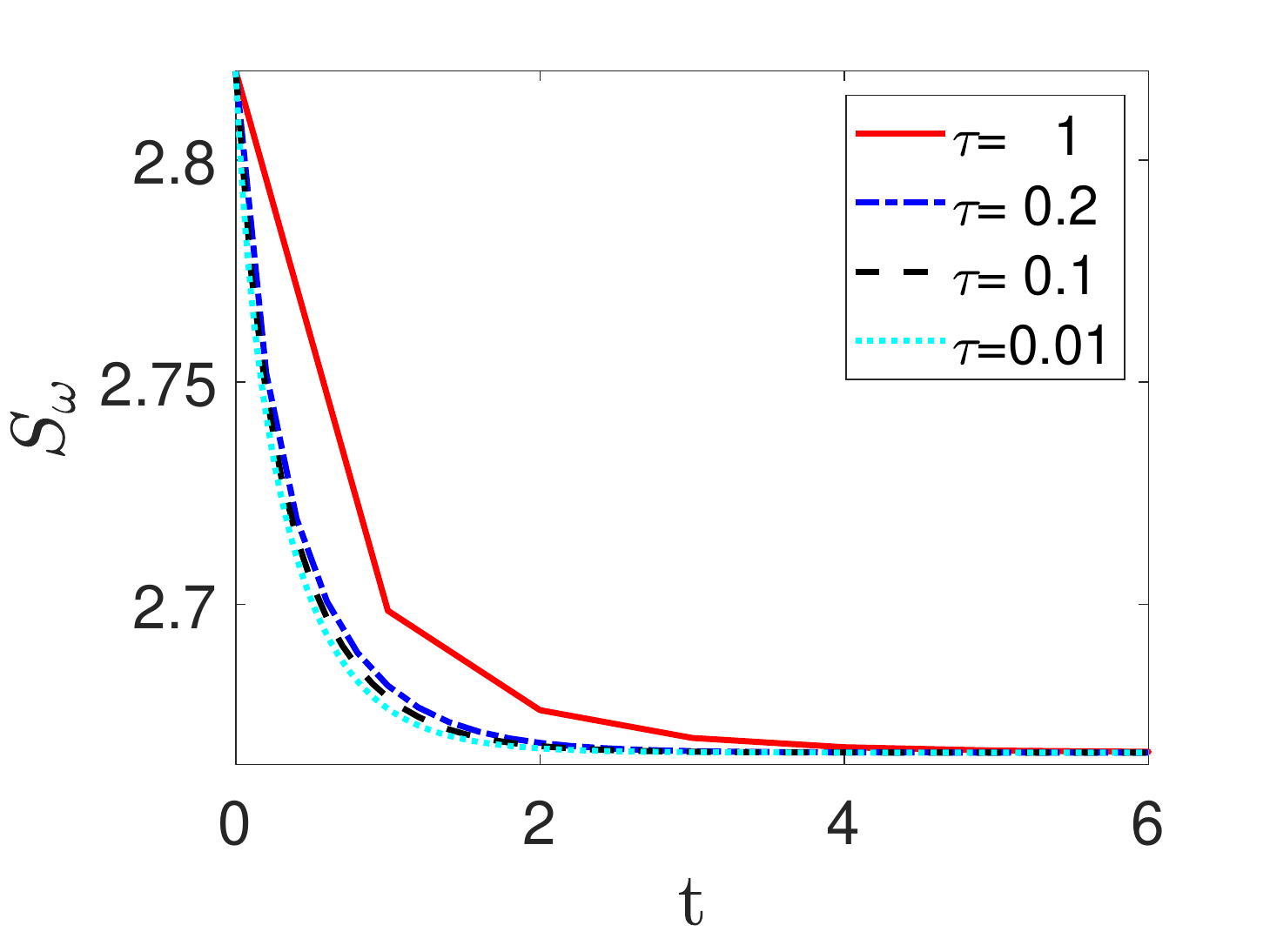}}
		\subfloat{\includegraphics[width=0.475\textwidth]{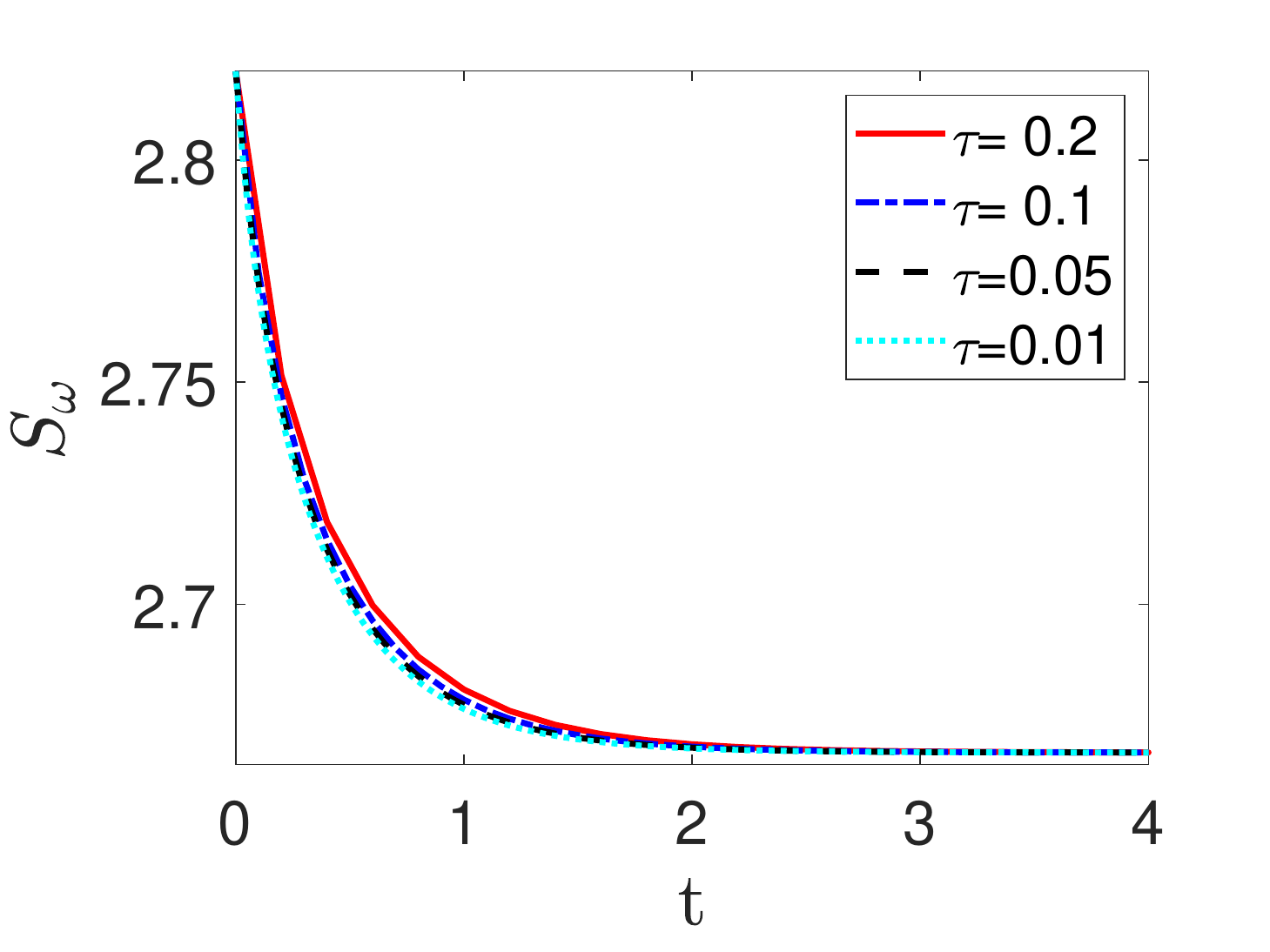}}\\ 
		\subfloat{\includegraphics[width=0.475\textwidth]{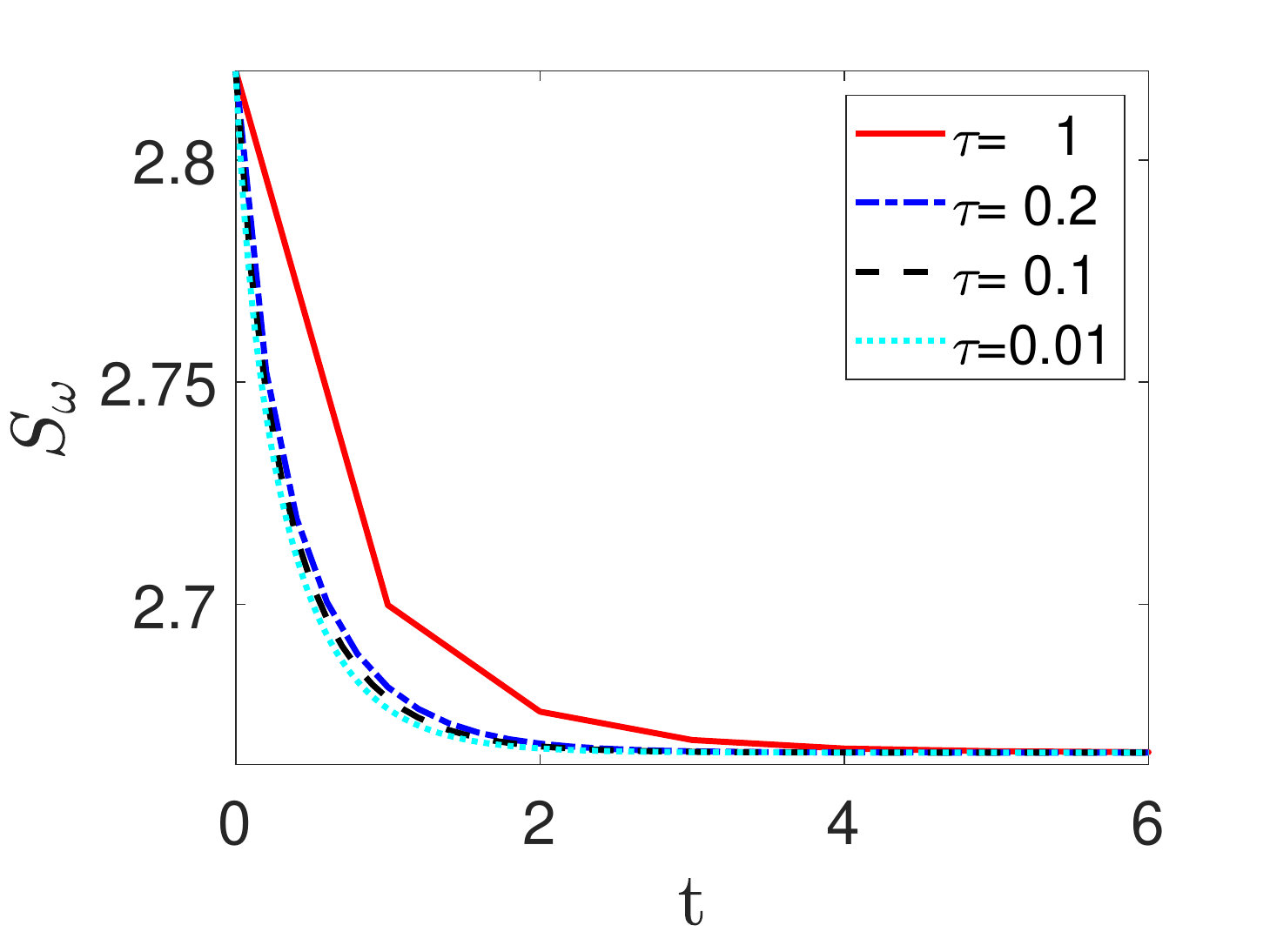}}
		\subfloat{\includegraphics[width=0.475\textwidth]{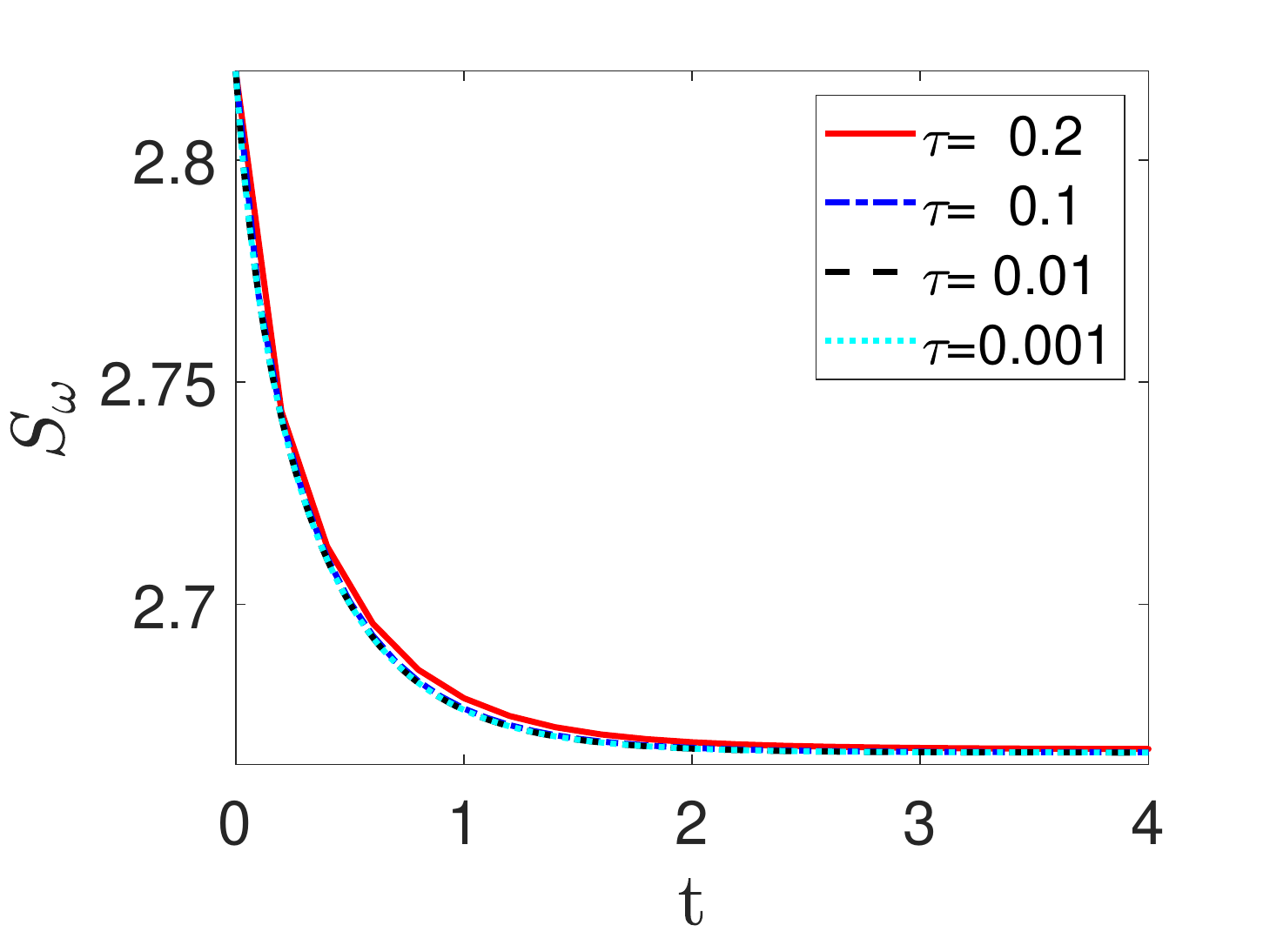}}
		\caption{comparison of the evolution of $ S_\omega $ computed by GFDN-BF (top left), GFDN-BE (top right), PGFDN-BF (bottom left) and GFDN-TS (bottom right) with different time step size $ \tau $ for Case I. in \cref{exmp:comparison}}
		\label{comparison_diff_dt}
	\end{figure}
	
	\subsection{Applications to least action ground state solutions}\label{sec:numerical examples}
	In this subsection, we will show the numerical results for different types of potentials including delta potentials (\cref{exmp:delta}), inverse power potentials (\cref{exmp:inverse_power}), well potentials (\cref{exmp:well}) and some smooth potentials (\cref{exmp:smooth}). These potentials are popular in physical literature and have attracted a lot of attention in mathematics. They are also quite different in terms of the singularity. In each example, we shall show the least action ground state solutions with different $ \alpha $ and $ \omega $ in 1D, 2D or 3D, and the change of $ S_g $, $ M_g $, $ E_g $ with respect to $ \omega $. Although the uniqueness (up to phase shift or translation) of the least action ground states is not known for general potentials (e.g. the potentials considered in \cref{exmp:well,exmp:smooth}), we want to mention that in our numerical experiments, we cannot find two different nonnegative least action ground states associated with the same $ \omega $. 
	
	Before proceeding to the specific examples, we shall first show the convergence rates of the GFDN-BF scheme in space numerically for all types of potentials to be discussed in this subsection. Since we currently only focus on the temporal discretization, we want to use these numerical results to justify our choice of spatial discretization method and to show the reliability of the results to be presented. As we have already mentioned, for the delta potential, the well potential and the inverse power potential, we just use FE method to deal with all of them while for smooth potentials, we use SP method and FD method. The uniform grid is used throughout this subsection. We consider the following potentials: 
	\begin{description}
		\item [I. Delta potential]
		\begin{equation}\label{eq:fulldiscretization_1}
			V(x) = - \delta(x) , \quad \alpha = 1 , \quad \omega = 1 , \quad \tau = 4 , \quad \vep = 10^{-9}, \quad \Omega = [-32, 32].  
		\end{equation}
		\item [II. Inverse power potential]
		\begin{equation}\label{eq:fulldiscretization_2}
			V(x) = -\frac{1}{|x|^{\frac{1}{2}}} , \quad \alpha = 1 , \quad \omega = 2, \quad \tau=4 , \quad \vep = 10^{-9}, \quad \Omega = [-16, 16].  
		\end{equation}
		\item [III. Well potential]
		\begin{equation}\label{eq:fulldiscretization_3}
			V(x) = \left\{
			\begin{aligned}
				&-1, &&|x| \leq 2, \\
				&0, &&|x|>2, 
			\end{aligned}
			\right., \  \alpha = 1, \  \omega = 2, \  \tau=1 , \  \vep = 10^{-9}, \  \Omega = [-16, 16].  
		\end{equation}
		\item [IV. Smooth potential]
		\begin{equation}\label{eq:fulldiscretization_4}
			V(x) = -e^{-|x|^2}, \quad \alpha = 1, \quad \omega = 1 , \quad \tau = 1 , \quad \vep = 10^{-12}, \quad \Omega = [-32, 32]. 
		\end{equation}
	\end{description}
	The numerical results are shown in \cref{fig:delta_convergence,fig:inverse_power_convergence,fig:well_convergence,fig:smooth_convergence} for the four cases above respectively. For the delta potential \cref{eq:fulldiscretization_1} and the well potential \cref{eq:fulldiscretization_3}, standard convergence order can be observed in \cref{fig:delta_convergence,fig:well_convergence}. However, we remark that such convergence can only hold when we include the points where the singularity of the potential function occurs into grid points (e.g. $ x=0 $ for the delta potential and $ x=\pm2 $ for the well potential). For the inverse power potential \cref{eq:fulldiscretization_2}, we can observe only first order convergence in $ H^1 $ norm and second order convergence in $ L^2 $ norm for both linear and quadratic element (see \cref{fig:inverse_power_convergence}). For the smooth potential \cref{eq:fulldiscretization_4}, second order convergence in both $ L^2 $ and $ H^1 $ norm can be observed when using finite difference method and spectral convergence can be observed when using sine pseudospectral method (see \cref{fig:smooth_convergence}). 
	
	
	\begin{figure}[htbp]
		\centering
		\subfloat{\includegraphics[width=0.475\textwidth]{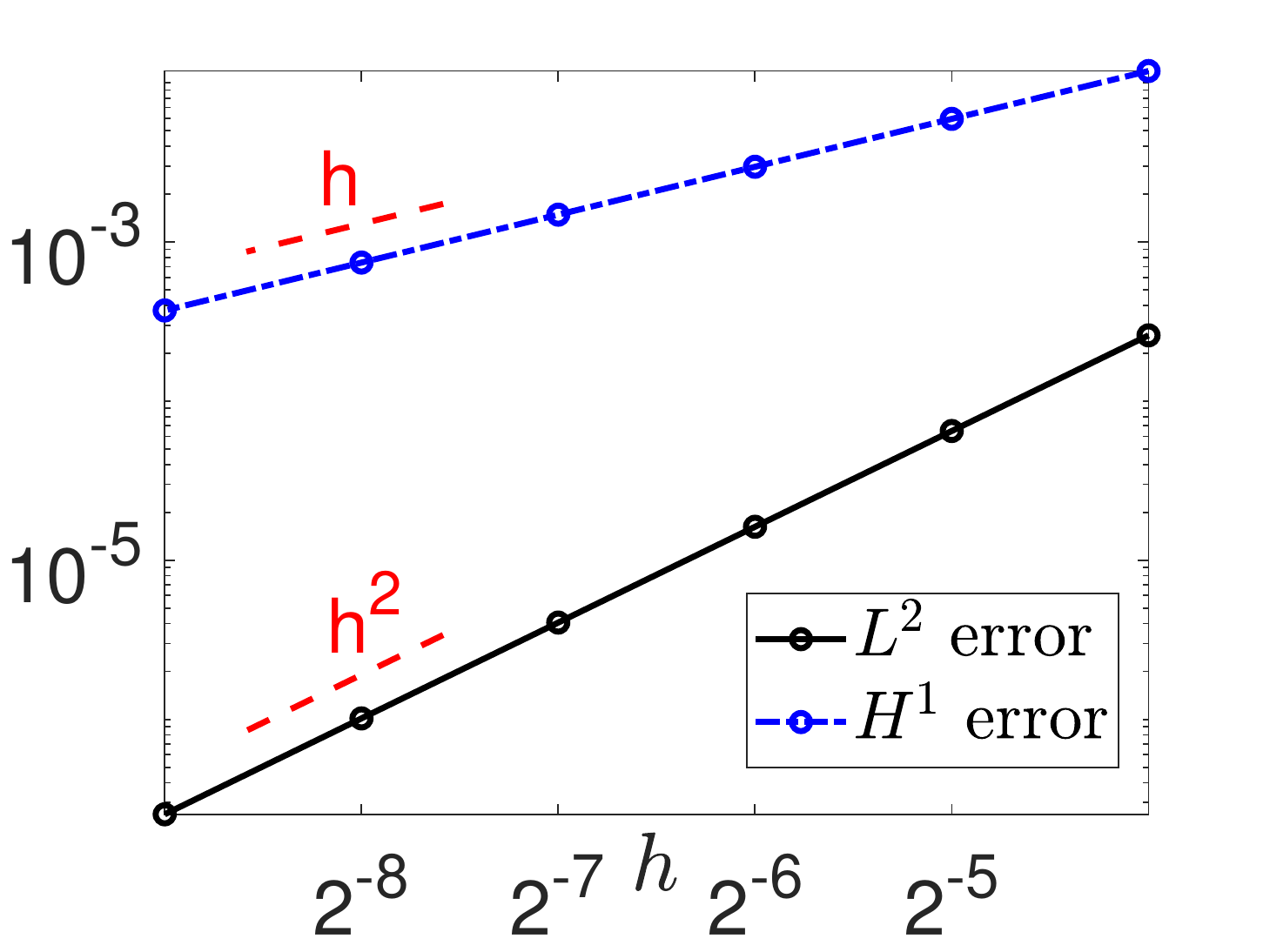}}
		\subfloat{\includegraphics[width=0.475\textwidth]{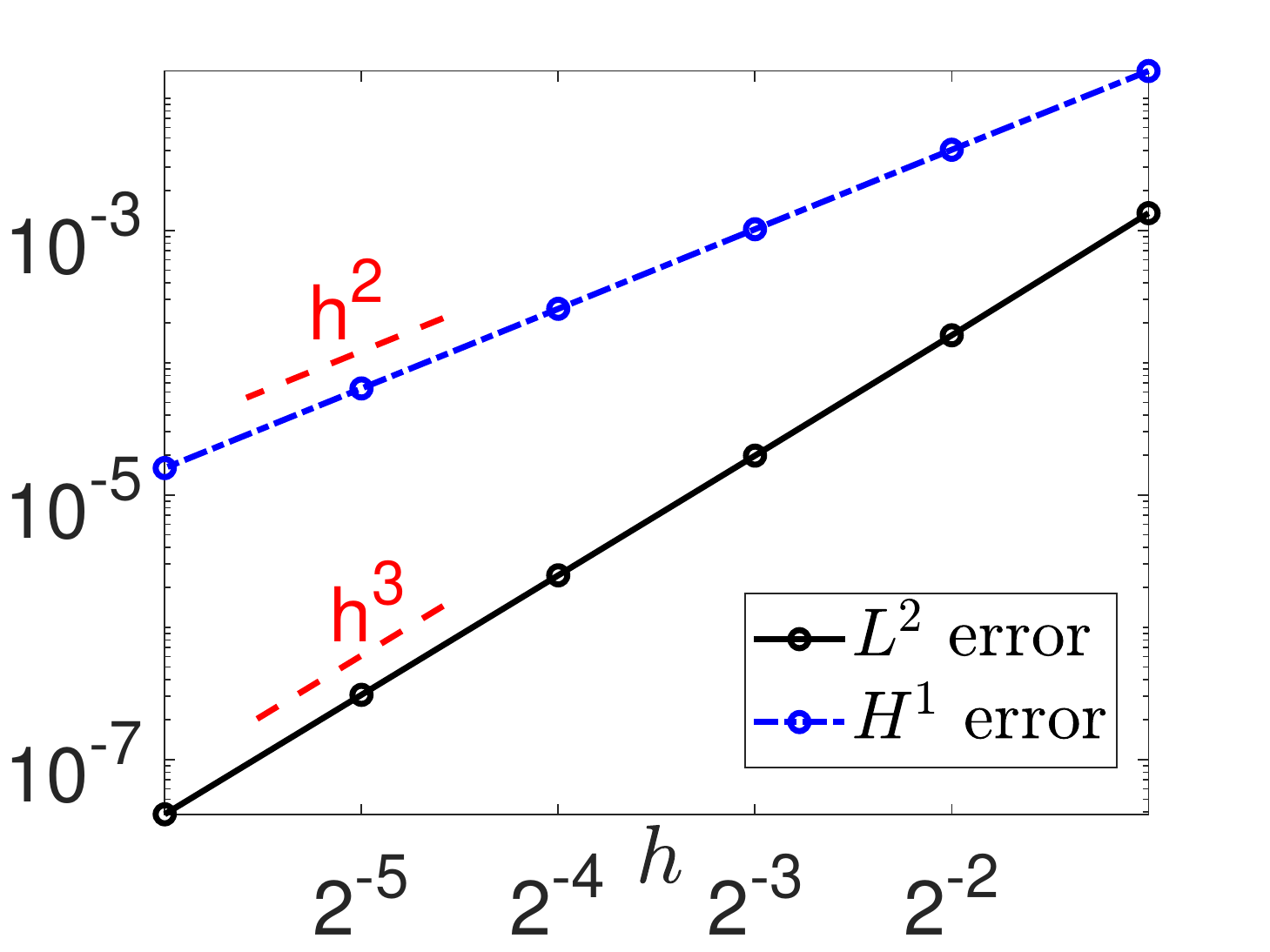}}
		\caption{convergence rate of finite element discretization (linear element (left) and quadratic element (right)) for delta potential \cref{eq:fulldiscretization_1} }
		\label{fig:delta_convergence}
	\end{figure}
	
	\begin{figure}[htbp]
		\centering
		\subfloat{\includegraphics[width=0.475\textwidth]{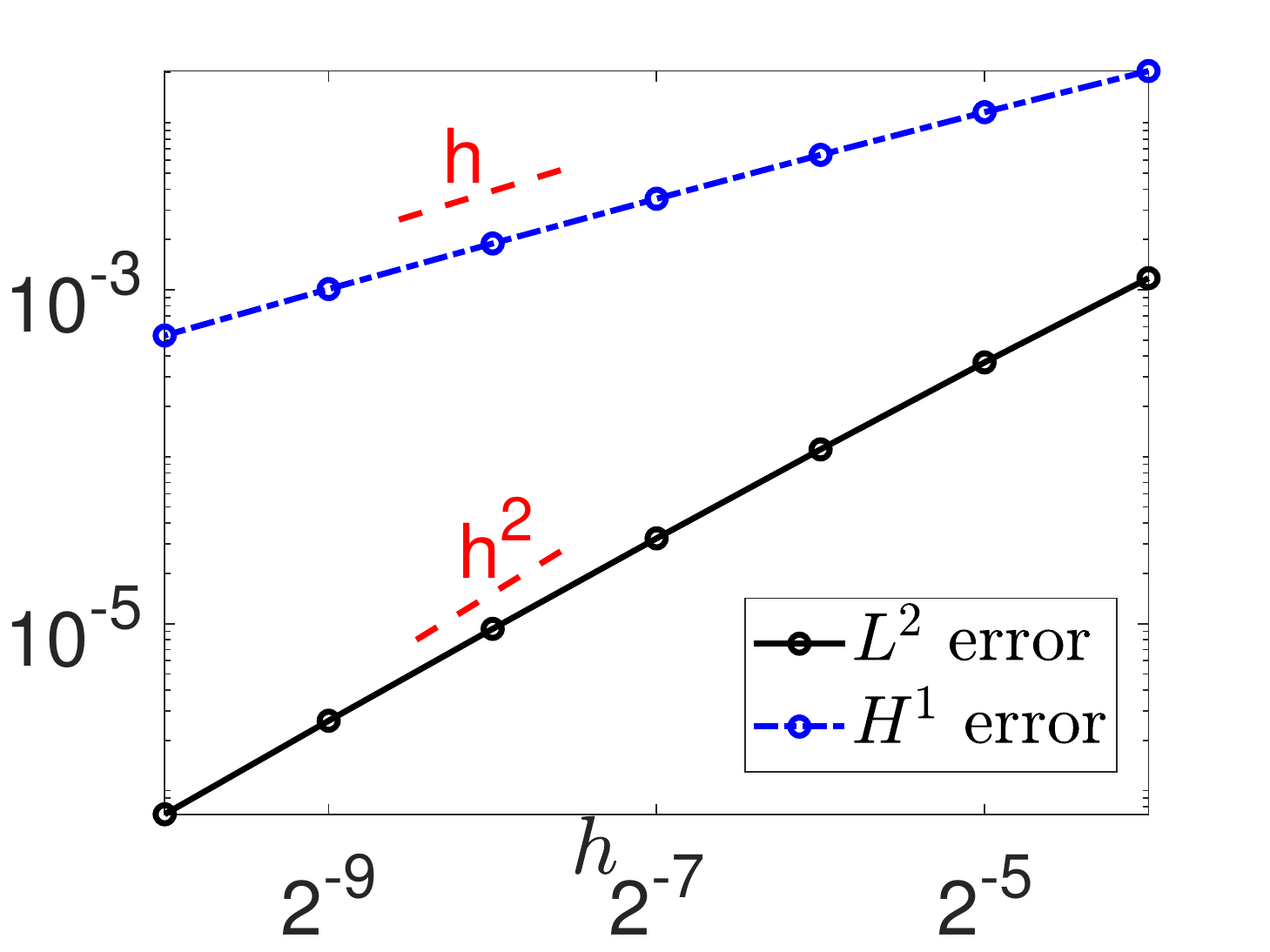}}
		\subfloat{\includegraphics[width=0.475\textwidth]{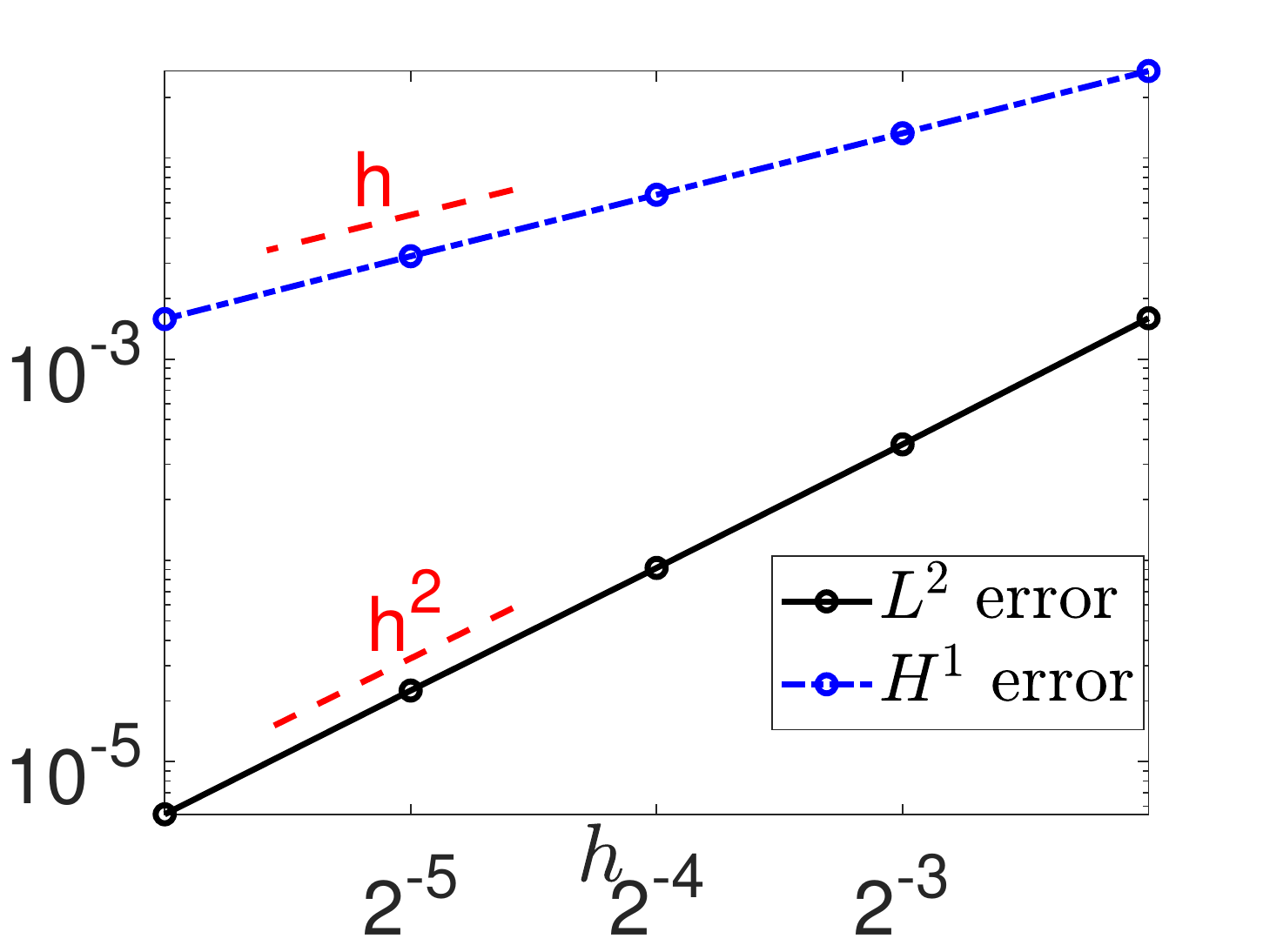}}
		\caption{convergence rate of finite element discretization (linear element (left) and quadratic element (right)) for inverse power potential \cref{eq:fulldiscretization_2} }
		\label{fig:inverse_power_convergence}
	\end{figure}
	
	\begin{figure}[htbp]
		\centering
		\subfloat{\includegraphics[width=0.475\textwidth]{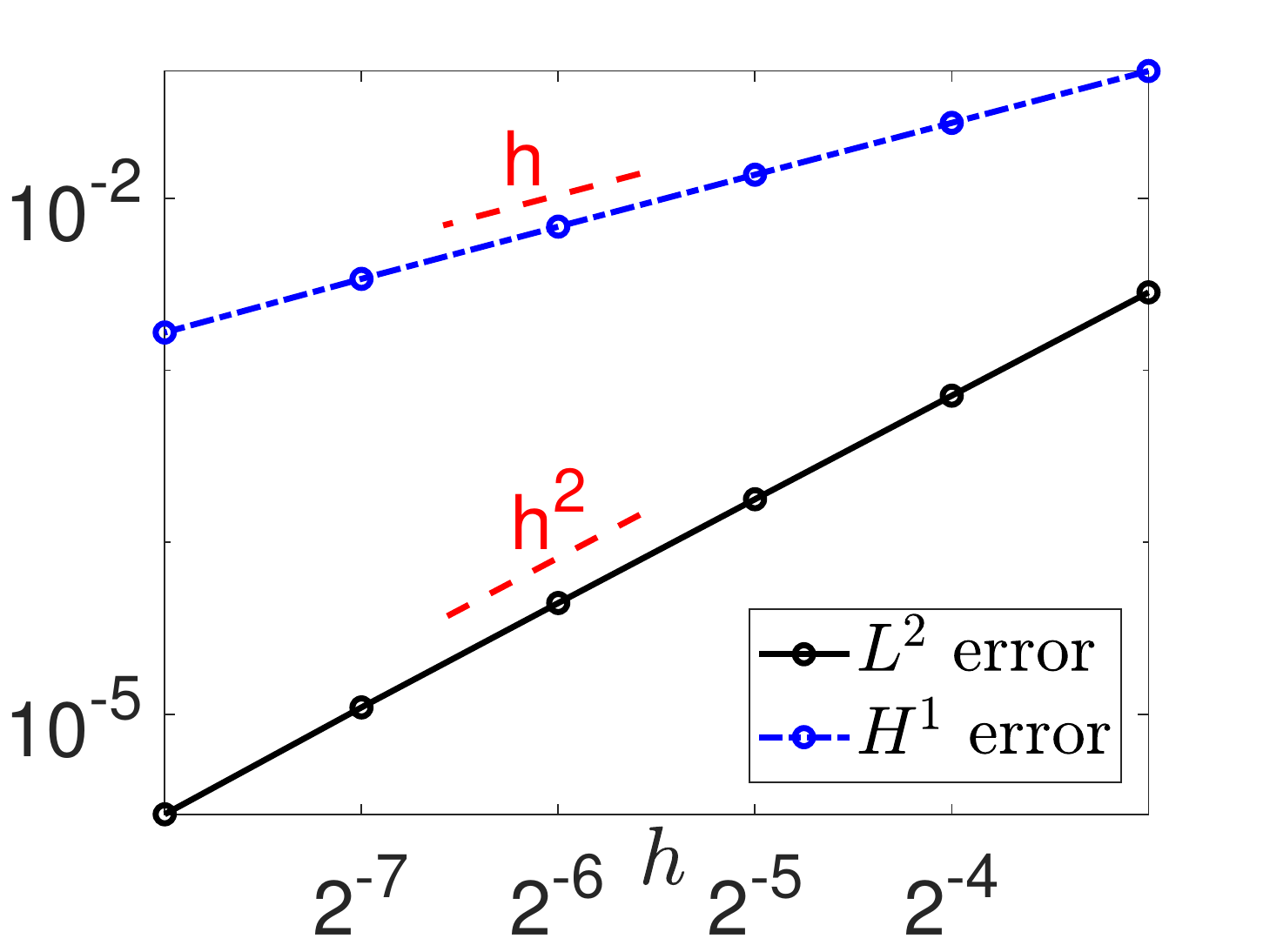}}
		\subfloat{\includegraphics[width=0.475\textwidth]{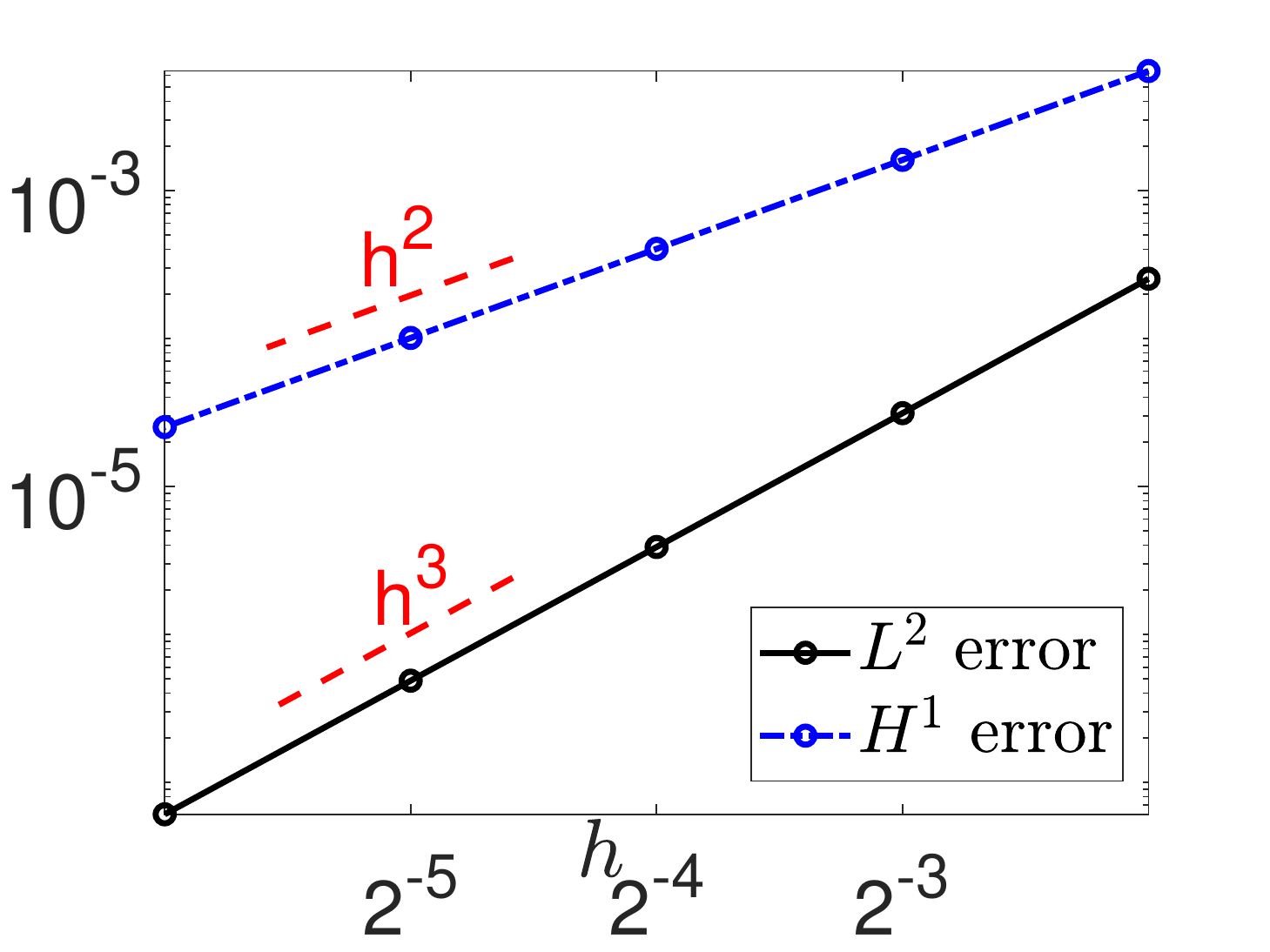}}
		\caption{convergence rate of finite element discretization (linear element (left) and quadratic element (right)) for well potential \cref{eq:fulldiscretization_3} }
		\label{fig:well_convergence}
	\end{figure}
	
	\begin{figure}[htbp]
		\centering
		\subfloat{\includegraphics[width=0.475\textwidth]{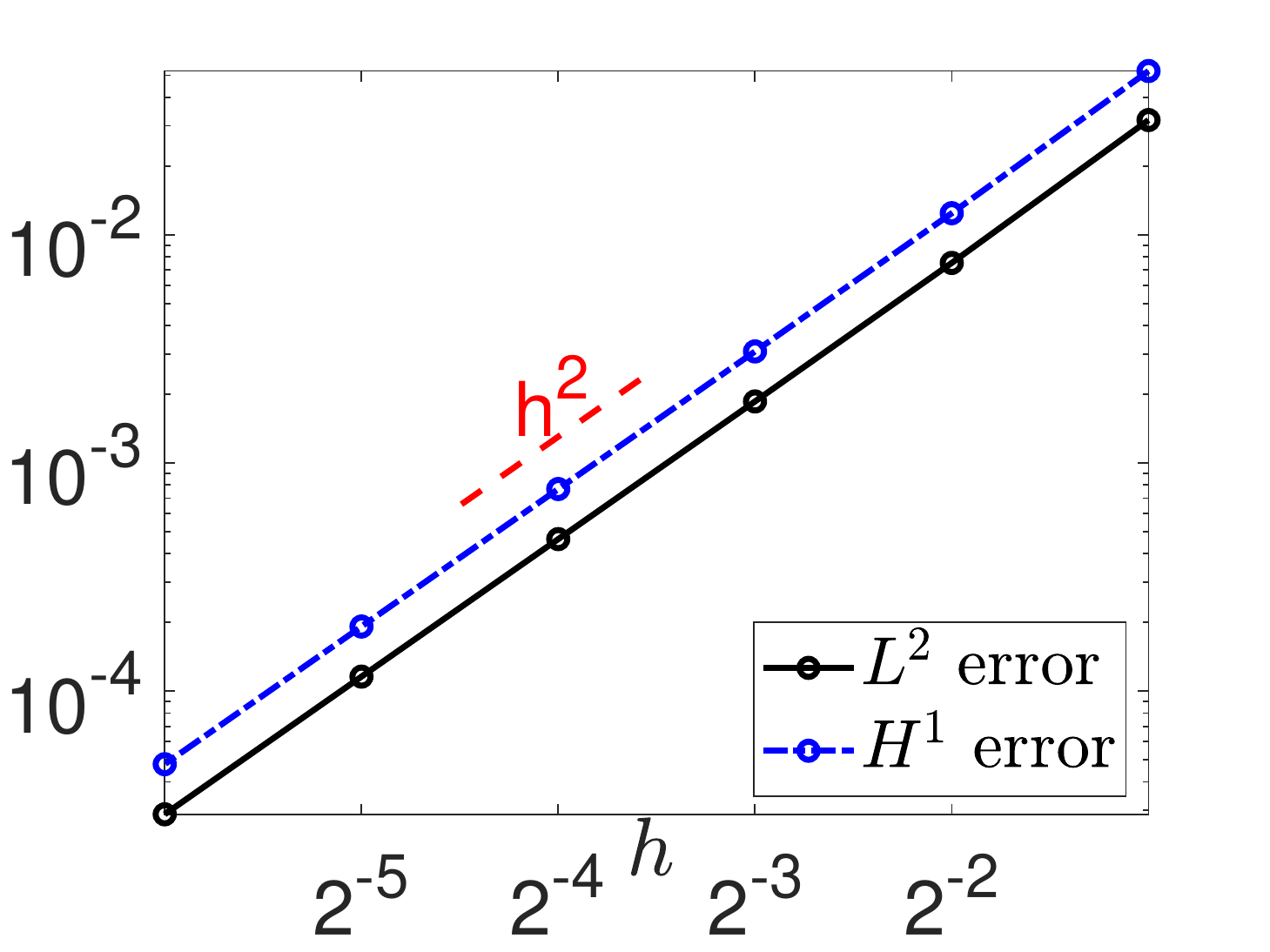}}
		\subfloat{\includegraphics[width=0.475\textwidth]{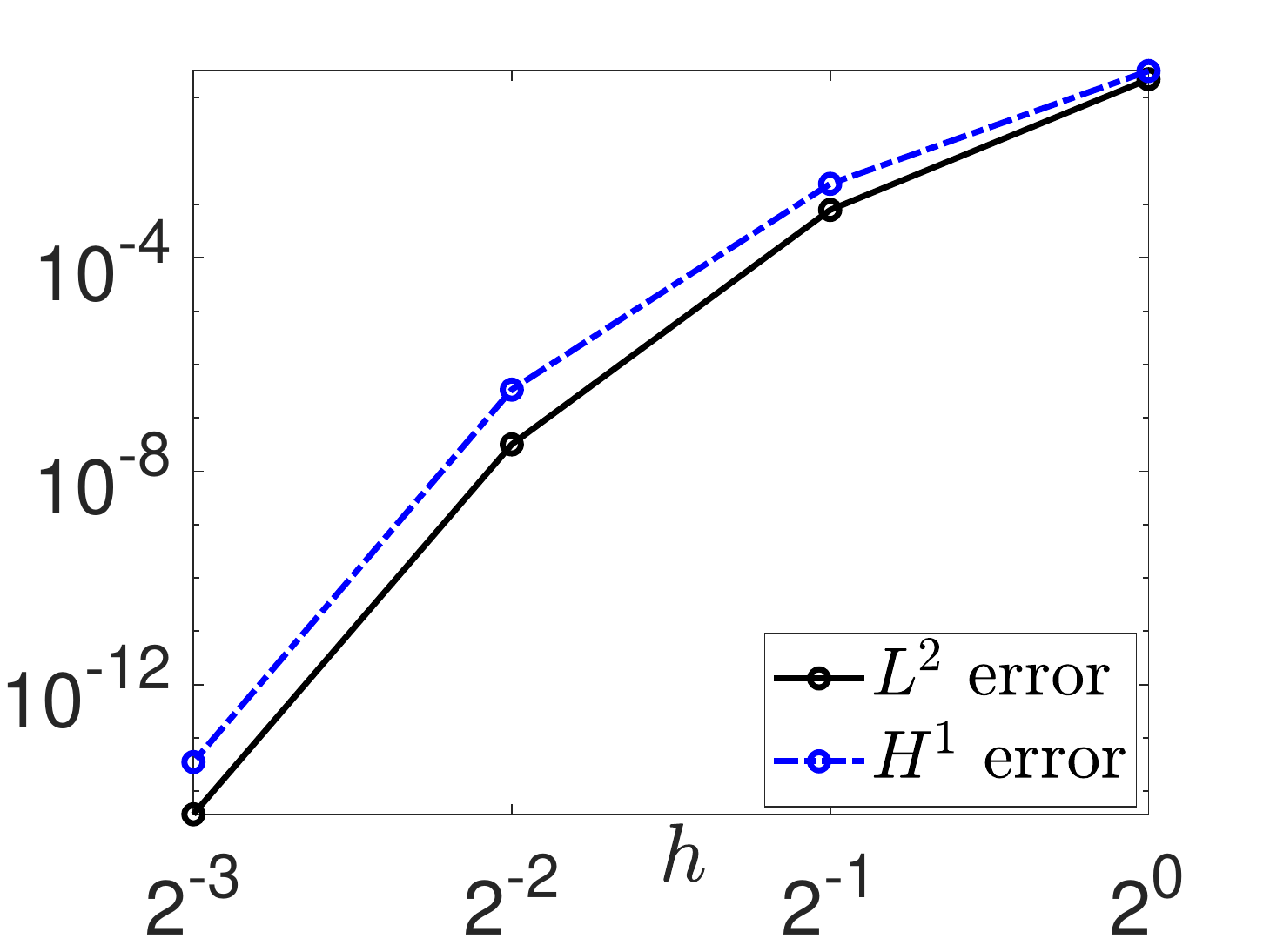}}
		\caption{convergence rate of finite difference method (left) and pseudospectral method (right) for smooth potential \cref{eq:fulldiscretization_4} }
		\label{fig:smooth_convergence}
	\end{figure}
	
	\begin{exmp}\label{exmp:delta}
		Least action ground states with delta potential
		\begin{equation}\label{V_1}
			V(x) = -Z \delta(x), \quad x \in \R. 
		\end{equation}
		It is known that $ \omega_0 = Z^2/4 $ and the unique positive ground state solution is given by \cref{delta_exact}. We consider the following three cases:
		\begin{description}
			\item[Case I. ] $ V = -\delta $, $ \alpha = 1 $, $ \omega_0 = 0.25 $. 
			\item[Case II. ] $ V = -2 \delta $, $ \alpha = 2 $, $ \omega_0 = 1 $. 
			\item[Case III. ] $ V =  - 2[\delta(x-1) + \delta(x) + \delta(x+1)] $, $ \alpha = 3 $, $ \omega_0 \approx 1.9216 $. 
		\end{description}
		
		Before showing the numerical results for all the cases above, we want to take Case III as an example to show the asymptotic results in \cref{lim_omega_0} and \cref{lim_omega_infty}. Let the rescaled function $ \widehat \phi_\omega $ and $ \widecheck \phi_\omega $ be defined as in \cref{rescaling}. The density of $ \widehat \phi_\omega $ and $ \widecheck \phi_\omega $ for different $ \omega > \omega_0 $ are shown in \cref{fig:delta_ground_states_normalized}. It can be clearly seen that $ \widehat \phi_\omega \rightarrow \phi^\text{lin}_0 $ as $ \omega \rightarrow \omega_0 $ and $ \widecheck \phi_\omega \rightarrow \phi_1^0 $ as $ \omega \rightarrow \infty $. 
		
		Then we shall show the least action ground states and change of $ S_g(\omega) $, $ M_g(\omega) $ and $ E_g(\omega) $ for Case I to III. For all the cases, we choose $ \Omega = [-32, -32] $, $ \tau = 1 $, $ \vep = 10^{-9} $, $ h = 2^{-5} $ and use quadratic finite element on a uniform mesh for spatial discretization. The results are shown in \cref{fig:delta_ground_states}. 
		
		We can observe from the left column in \mbox{\cref{fig:delta_ground_states}} that there is always a jump of the first derivative of the least action ground state around the support of the delta function and thus the solution is only in $ H^1 $. 
		
		We can observe from the right column in \cref{fig:delta_ground_states} that $ S_g(\omega) $ is always increasing with $ \omega $, which corresponds with the theoretical result shown in \cref{sec:asymtotics}. Moreover, in the $ L^2 $-subcritical and -critical case (Case I and II), the ground state mass $ M_g(\omega) $ is monotonically increasing, which indicates that the standing wave solutions are stable by the criterion \ref{cri1}-\ref{cri2} (see \cite{fukuizumi2008}). Besides, in the $ L^2 $-critical case (Case II), there is a threshold value of mass when $ \omega \rightarrow \infty $ given by $ M(\phi_1^0) $ which is a result of \cref{mass_thereshold}. 
		
		We want to mention that since we choose three delta functions located at $ x=0, \pm 1 $ in Case III, some different phenomena can be observed in the numerical experiments. That is if the initial data $ \phi_0 $ is shifted away from the origin, the GFDN-BF may give some asymmetric solutions different from what is shown in \cref{fig:delta_ground_states}. When checking the value of $ S_\omega $, one may find that such asymmetric solutions have larger action than the symmetric ones, which implies that they are not least action ground states. However, as shown in \cite{sym_breaking}, the symmetry breaking phenomenon can indeed happen and some examples are shown in \cref{exmp:smooth}. 
		
		\begin{figure}[htbp]
			\centering
			\subfloat{\includegraphics[width=0.475\textwidth]{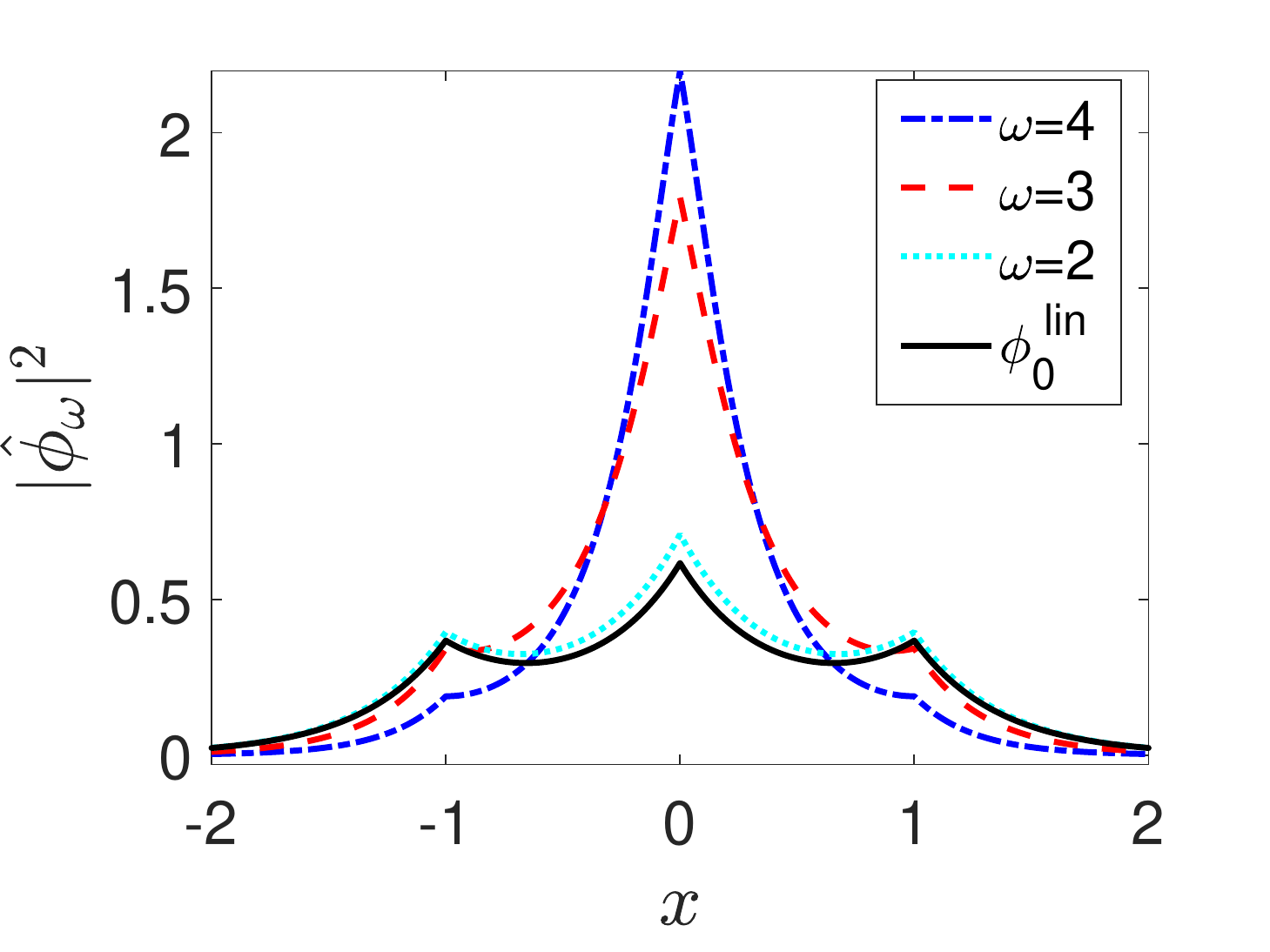}}
			\subfloat{\includegraphics[width=0.475\textwidth]{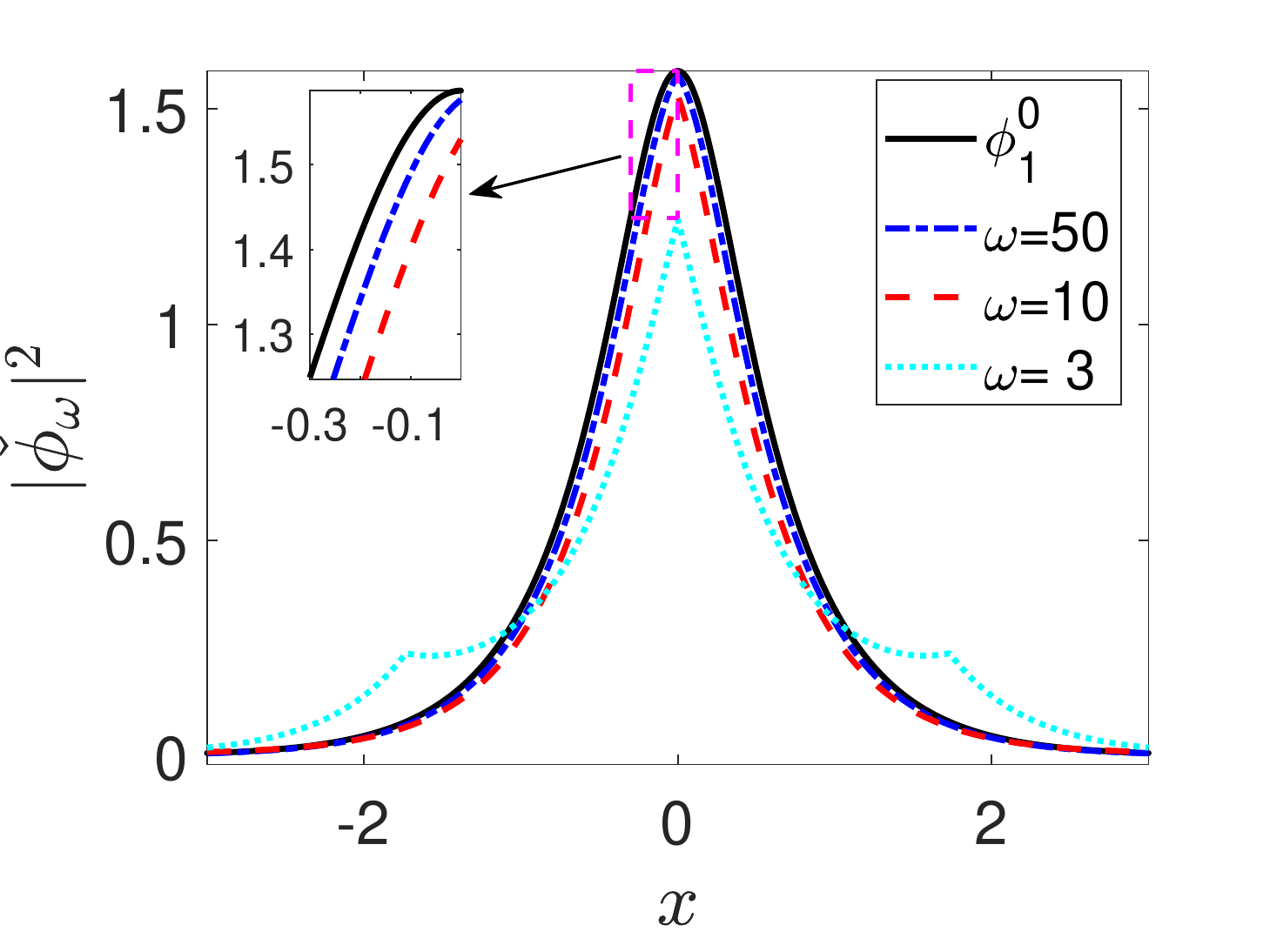}}
			\caption{density of the normalized least action ground states $ \widehat \phi_\omega $ (left) and $ \widecheck \phi_\omega $ (right) for Case III in \cref{exmp:delta}}
			\label{fig:delta_ground_states_normalized}
		\end{figure}
		
		\begin{figure}[htbp]
			\centering
			\subfloat{\includegraphics[width=0.475\textwidth]{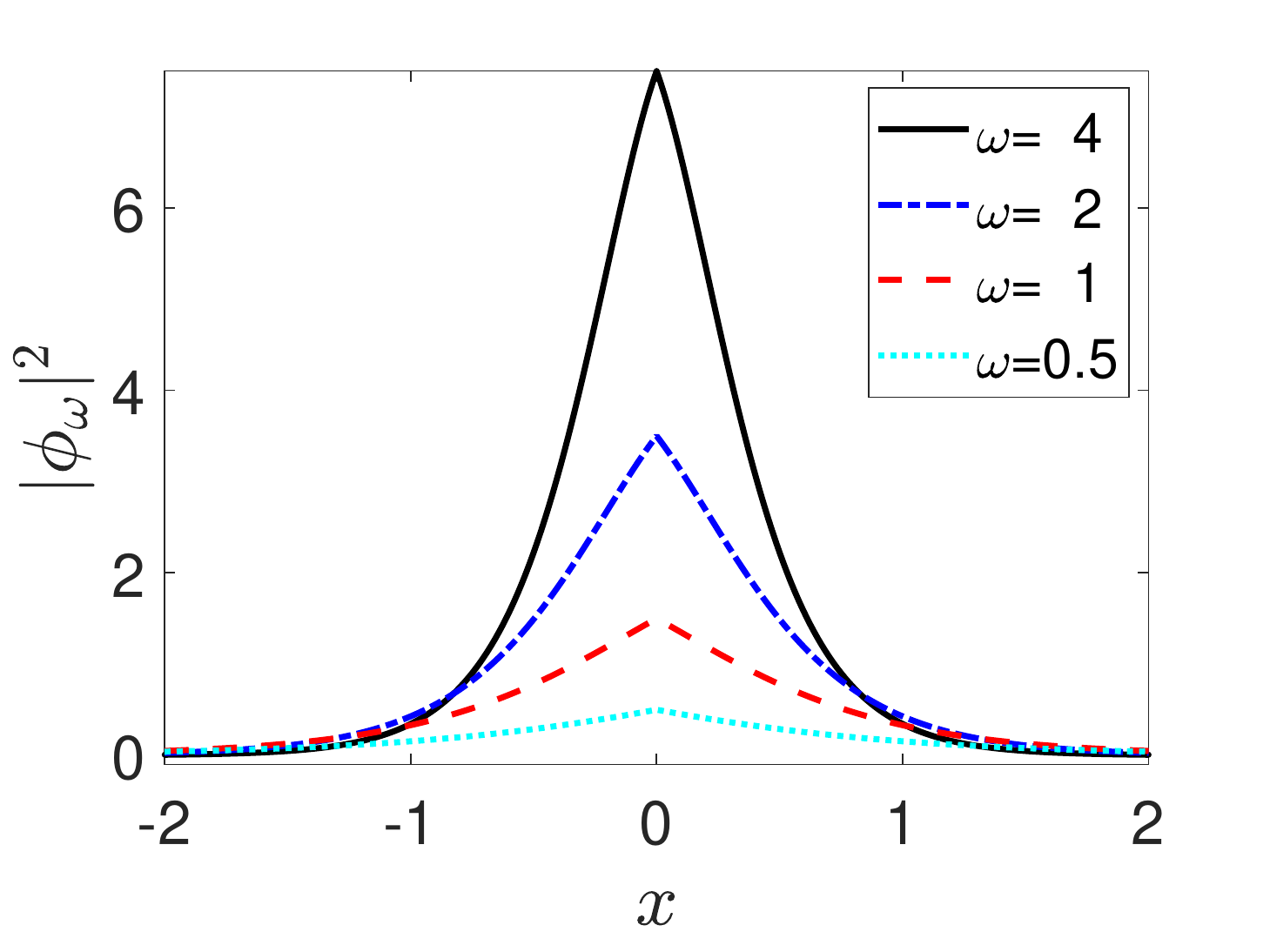}}
			\subfloat{\includegraphics[width=0.475\textwidth]{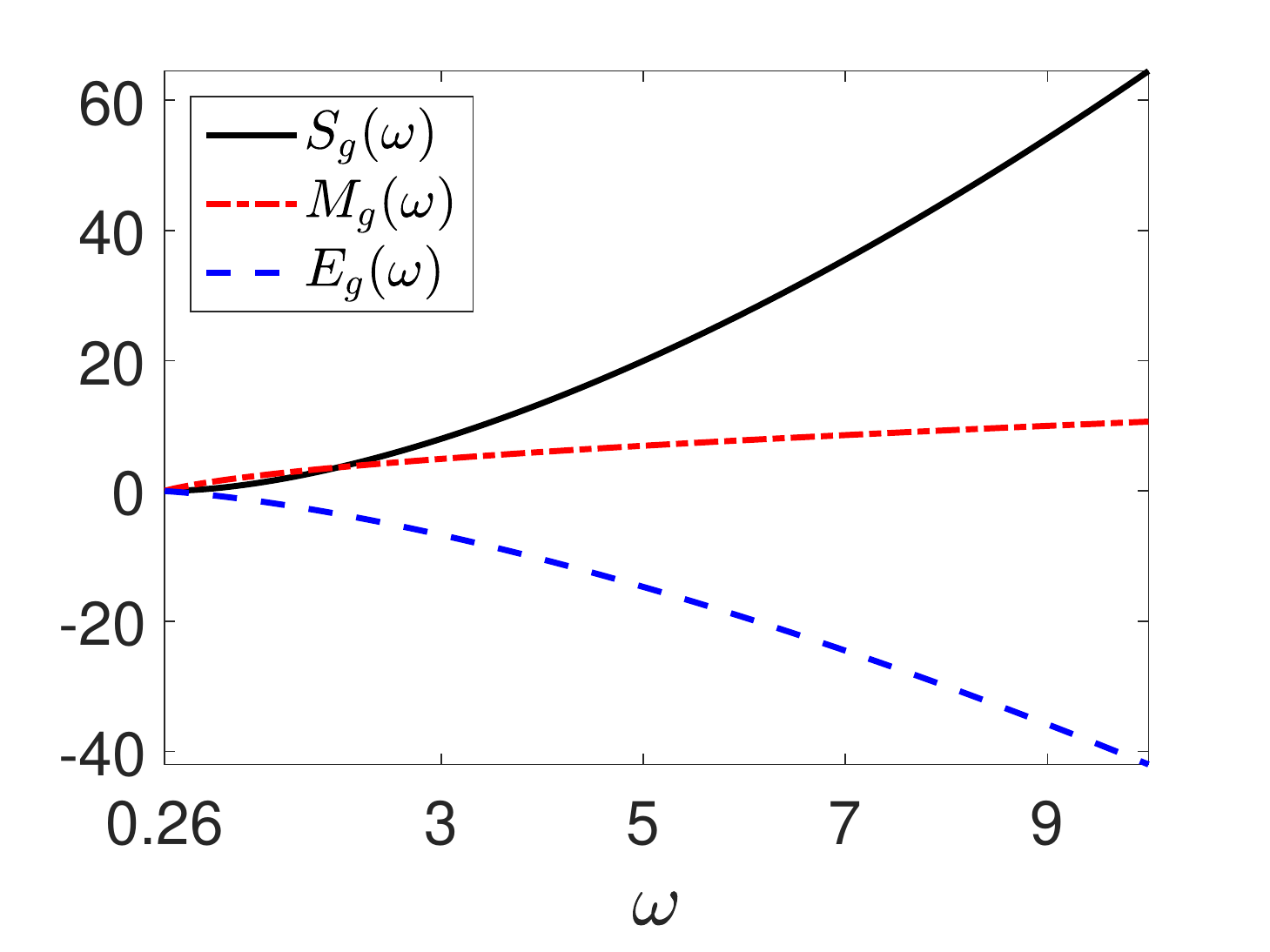}}\\
			\subfloat{\includegraphics[width=0.475\textwidth]{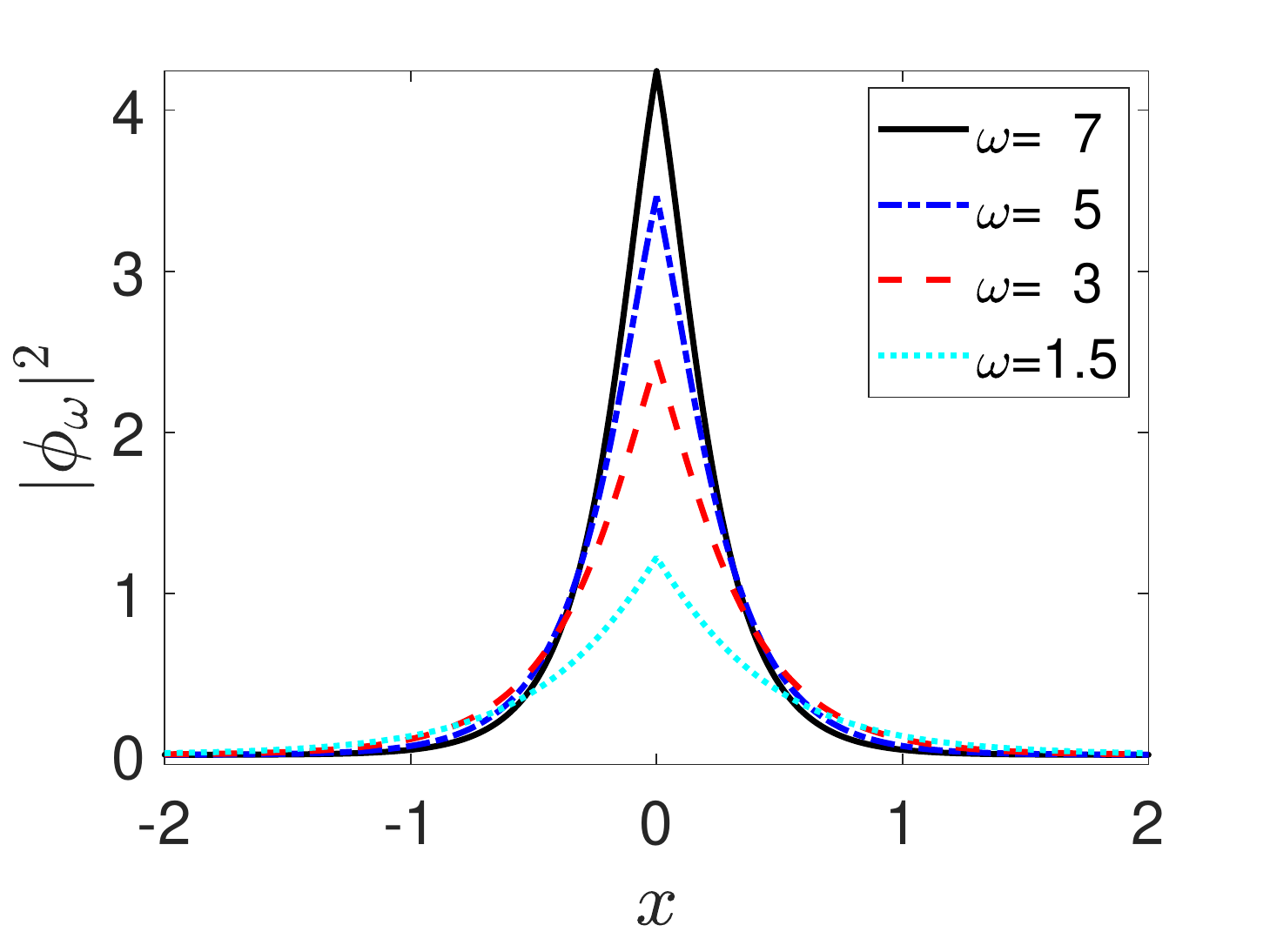}}
			\subfloat{\includegraphics[width=0.475\textwidth]{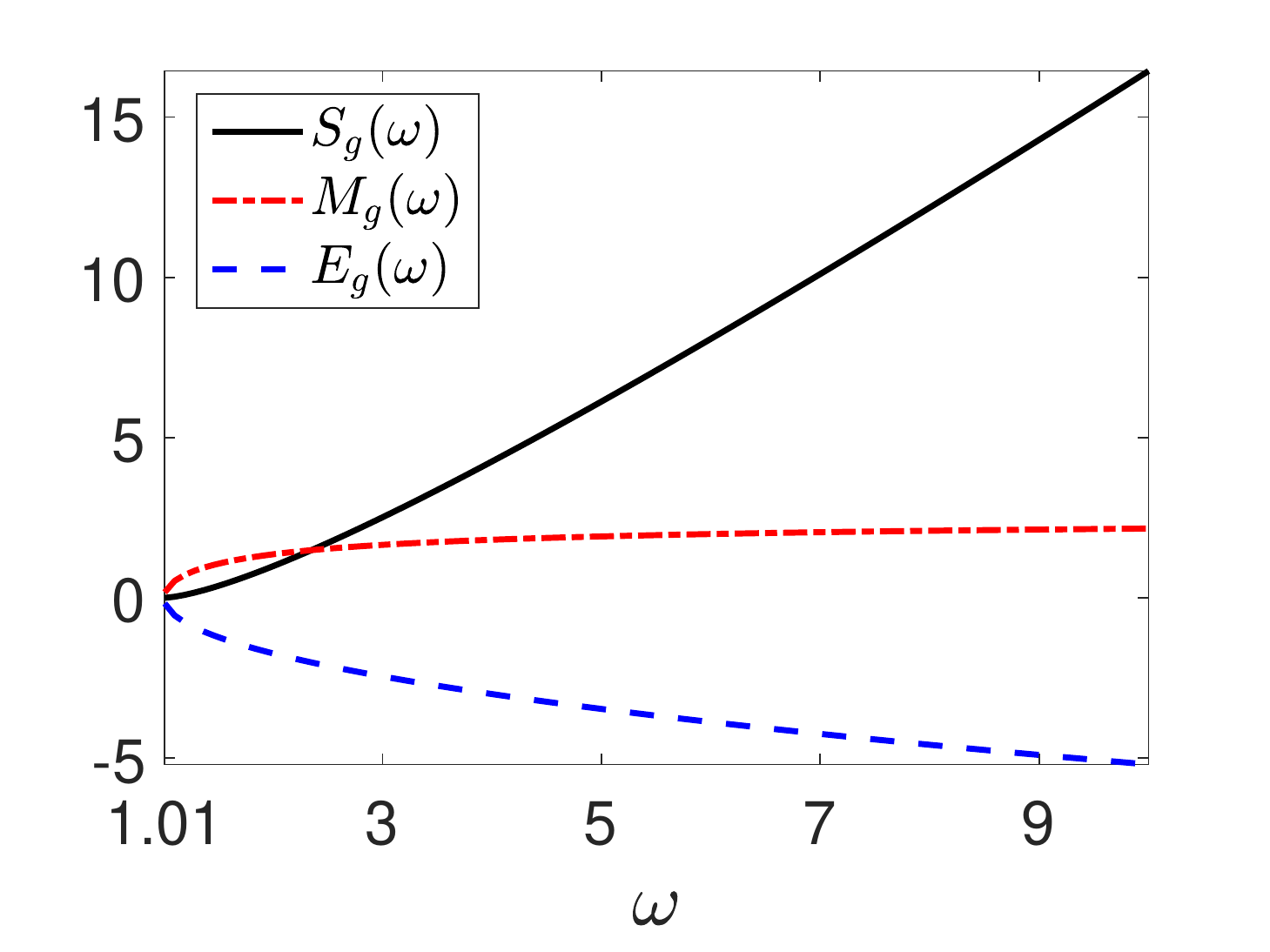}}\\
			\subfloat{\includegraphics[width=0.475\textwidth]{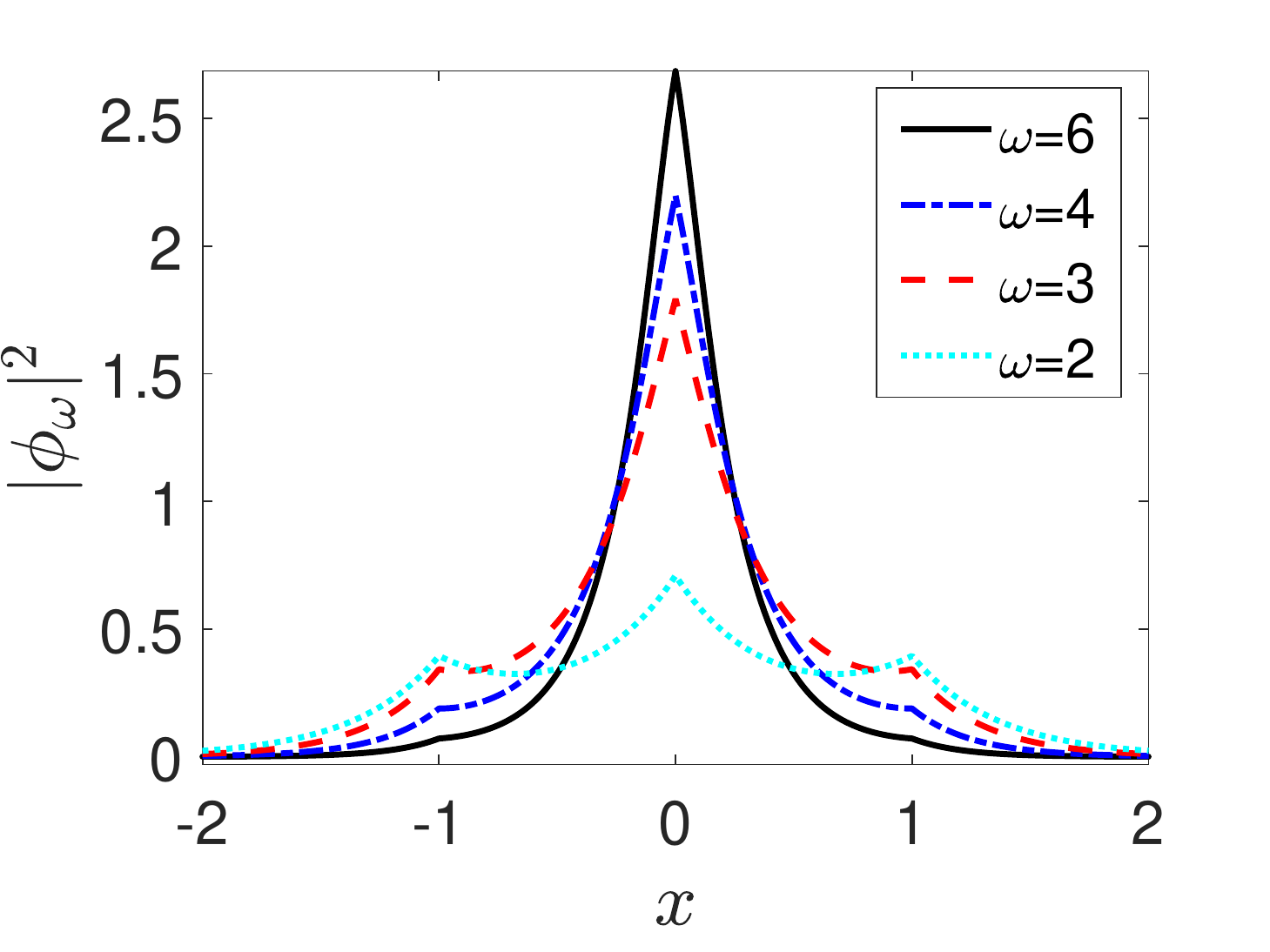}}
			\subfloat{\includegraphics[width=0.475\textwidth]{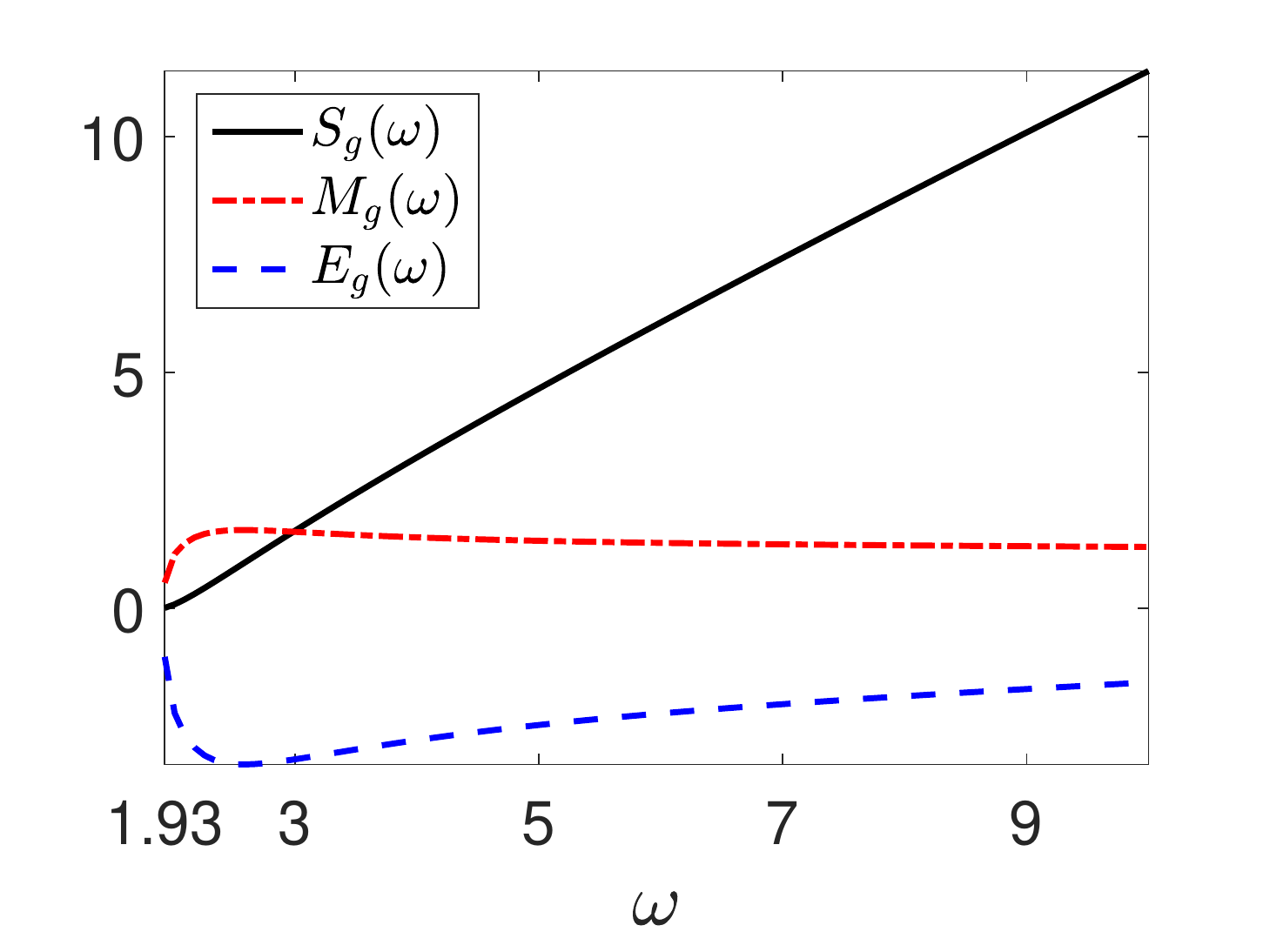}}
			\caption{density of the least action ground states (left) and change of $ S_g $, $ M_g $ and $ E_g $ (right) for Case I (top), Case II (middle) and Case III (bottom) in \cref{exmp:delta}}
			\label{fig:delta_ground_states}
		\end{figure}
	\end{exmp}
	
	\begin{exmp}\label{exmp:inverse_power}
		Least action ground states with attractive inverse power potential
		\begin{equation}\label{inver_power_potential_in_example}
			V(\vx) = -\frac{\gamma}{|\vx|^\sigma}, \quad \vx \in \R^d, \quad \gamma>0, \quad 0<\sigma < \min\{2, d\}. 
		\end{equation}
		We consider the following three cases in 1D, 2D and 3D: 
		\begin{description}
			\item [Case I.] $ d=1 $, $ \sigma = \frac{1}{2} $, $ \gamma = 1 $, $ \alpha = 1 $, $ \omega_0 \approx 1.6535 $; 
			\item [Case II. ] $ d=2 $, $ \sigma = 1 $, $ \gamma = 1 $, $ \alpha = 1 $, $ \omega_0 \approx 1.0000 $ ; 
			\item [Case III. ] $ d=3 $, $ \sigma = 1.5 $, $ \gamma = 1$, $ \alpha = 1 $, $ \omega_0 \approx 0.2986 $. 
		\end{description}
		
		Again, we shall show the least action ground state and change of $ S_g(\omega) $, $ M_g(\omega) $ and $ E_g(\omega) $ for Case I to III. Due to the radial symmetry of the potential in 2D and spherical symmetry in 3D, all the three cases can be reduced to a 1D problem. In computation, we choose $ \Omega= \{\vx \in \R^d : |\vx| < 16 \} $, $ \tau = 1 $, $ \vep = 10^{-9} $, $ h=2^{-8} $ and use linear finite element method with a uniform mesh for spatial discretization for all the cases. The reason we fix $ \alpha=1 $ is that $ \alpha=1 $ is $ L^2 $-subcritical when $ d=1 $, $ L^2 $-critical when $ d=2 $ and $ L^2 $-supercritical when $ d=3 $. The numerical results are shown in \cref{fig:inverse_power_ground_states}.
		
		From the left column in \cref{fig:inverse_power_ground_states}, we can observe very different behavior of the least action ground states around the origin for three cases. In Case I with $ \alpha = 1/2 $ (top), it seems the first derivative $ \phi_\omega'(x) \rightarrow 0 $ as $ x \rightarrow 0 $. In Case II with $ \alpha = 1 $, the first derivative of the least action ground state seems to tend to some nonzero constant. While in Case III with $ \alpha = 3/2 $, the first derivative seems to be unbounded around the origin. Actually, it is proved in \mbox{\cite{fukaya2021}} that when $ 0 < \alpha < 1 $, the least action ground state is in $ C^1 $ while when $ 1 \leq \alpha < 2 $, $ \phi_\omega'(|\vx|) \sim |\vx|^{1-\alpha} $ as $ \vx \rightarrow 0 $, which correspond with our numerical results. 
		
		For the change of $ S_g(\omega) $, $ M_g(\omega) $ and $ E_g(\omega) $ in the right column of \cref{fig:inverse_power_ground_states}, similar phenomena can be observed as in the previous example: $ S_g(\omega) $ is monotonically increasing with $ \omega $ and $ M_g(\omega) $ tends to a threshold value as $ \omega \rightarrow \infty $ when $ \alpha $ is $ L^2 $-critical (Case II) as well as the asymptotic results \cref{lim_omega_0} and \cref{lim_omega_infty} which is not shown here for brevity. Concerning $ M_g(\omega) $, we can observe that in the $ L^2 $-subcritical and $ L^2 $-critical case (Case I and II), the ground state mass $ M_g(\omega) $ is monotonically increasing while for the $ L^2 $-supercritical case (Case III), $ M_g(\omega) $ is increasing when $ \omega<\omega_c $ and decreasing when $ \omega>\omega_c $ for some $ \omega_c > \omega_0 $, which indicates \cref{conjecture} about the stability and instability of the standing wave solutions by the result in \cite{fukaya2021} and the criterion \ref{cri1}-\ref{cri2} (see more detailed discussion in \cref{sec:stability}).  
		\begin{figure}[htbp]
			\centering
			\subfloat{\includegraphics[width=0.475\textwidth]{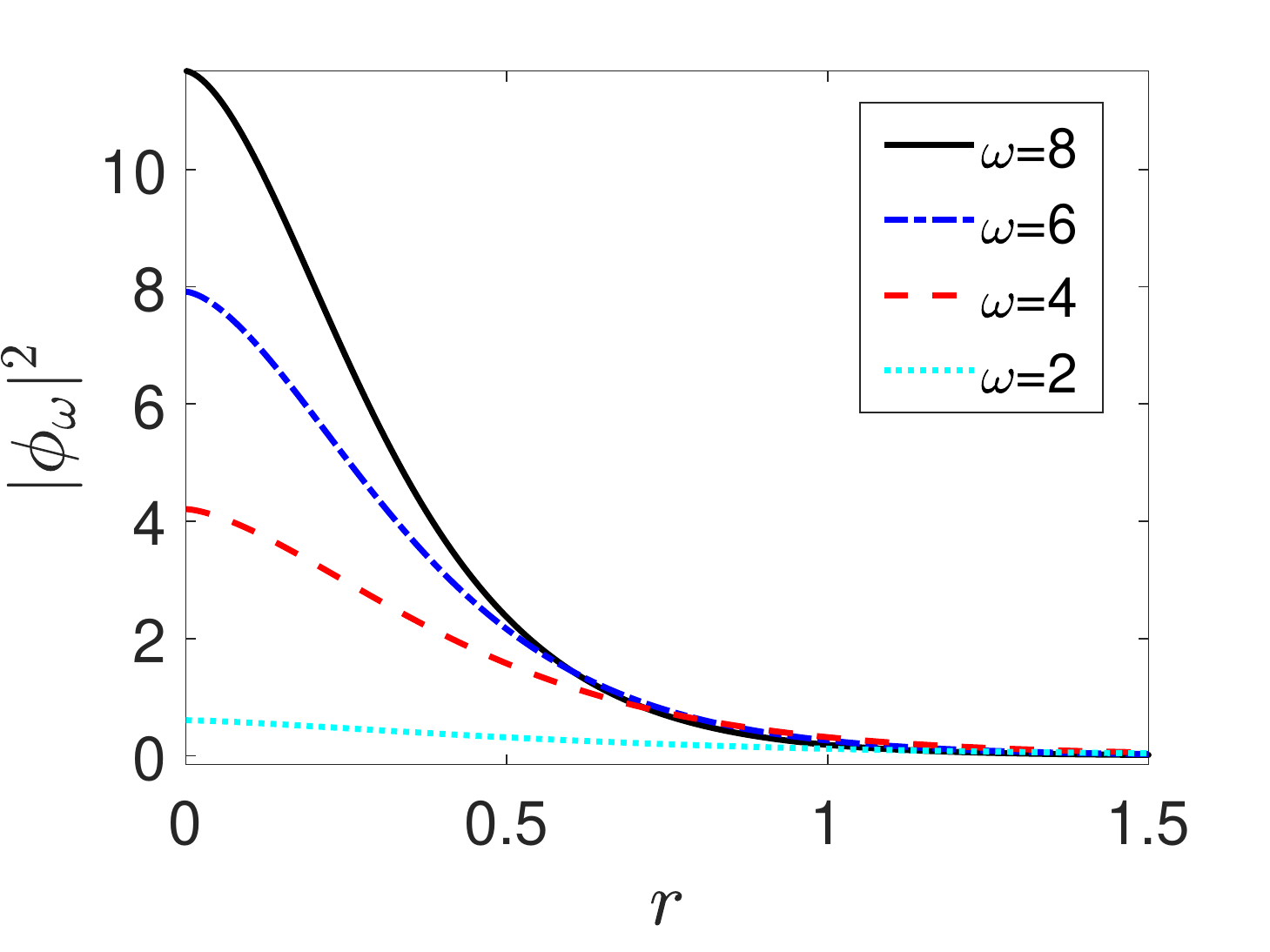}}
			\subfloat{\includegraphics[width=0.475\textwidth]{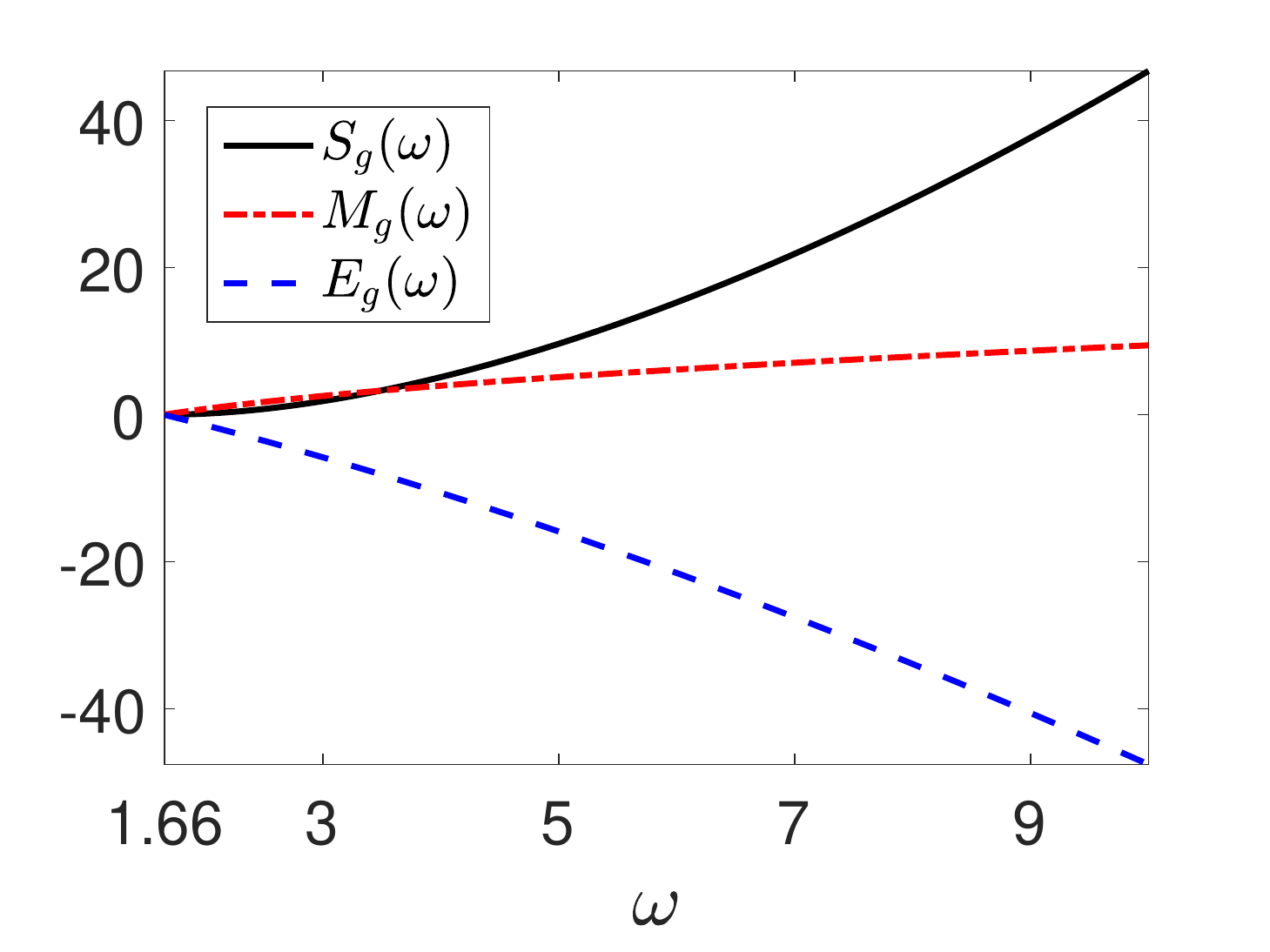}}\\
			\subfloat{\includegraphics[width=0.475\textwidth]{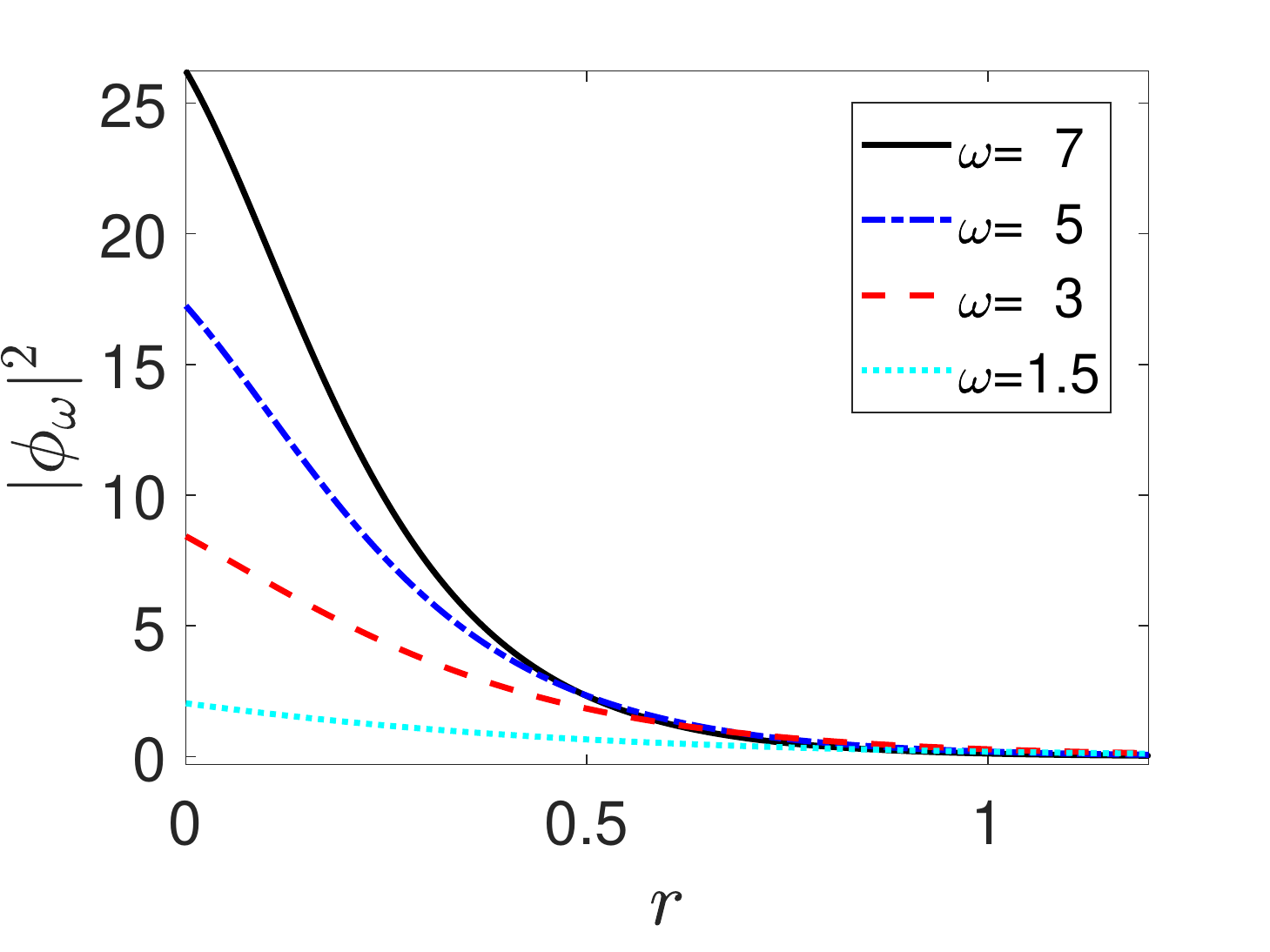}}
			\subfloat{\includegraphics[width=0.475\textwidth]{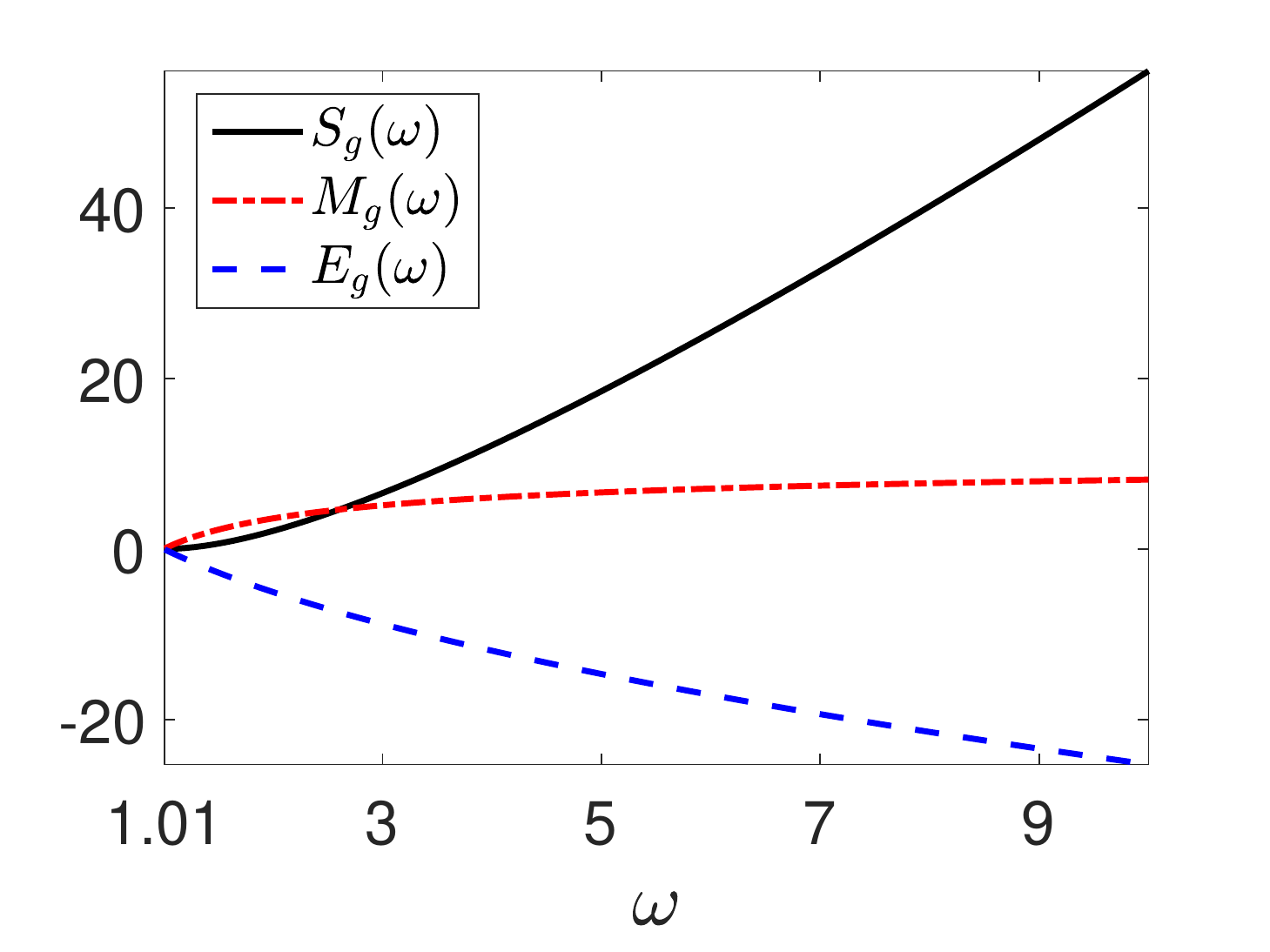}}\\
			\subfloat{\includegraphics[width=0.475\textwidth]{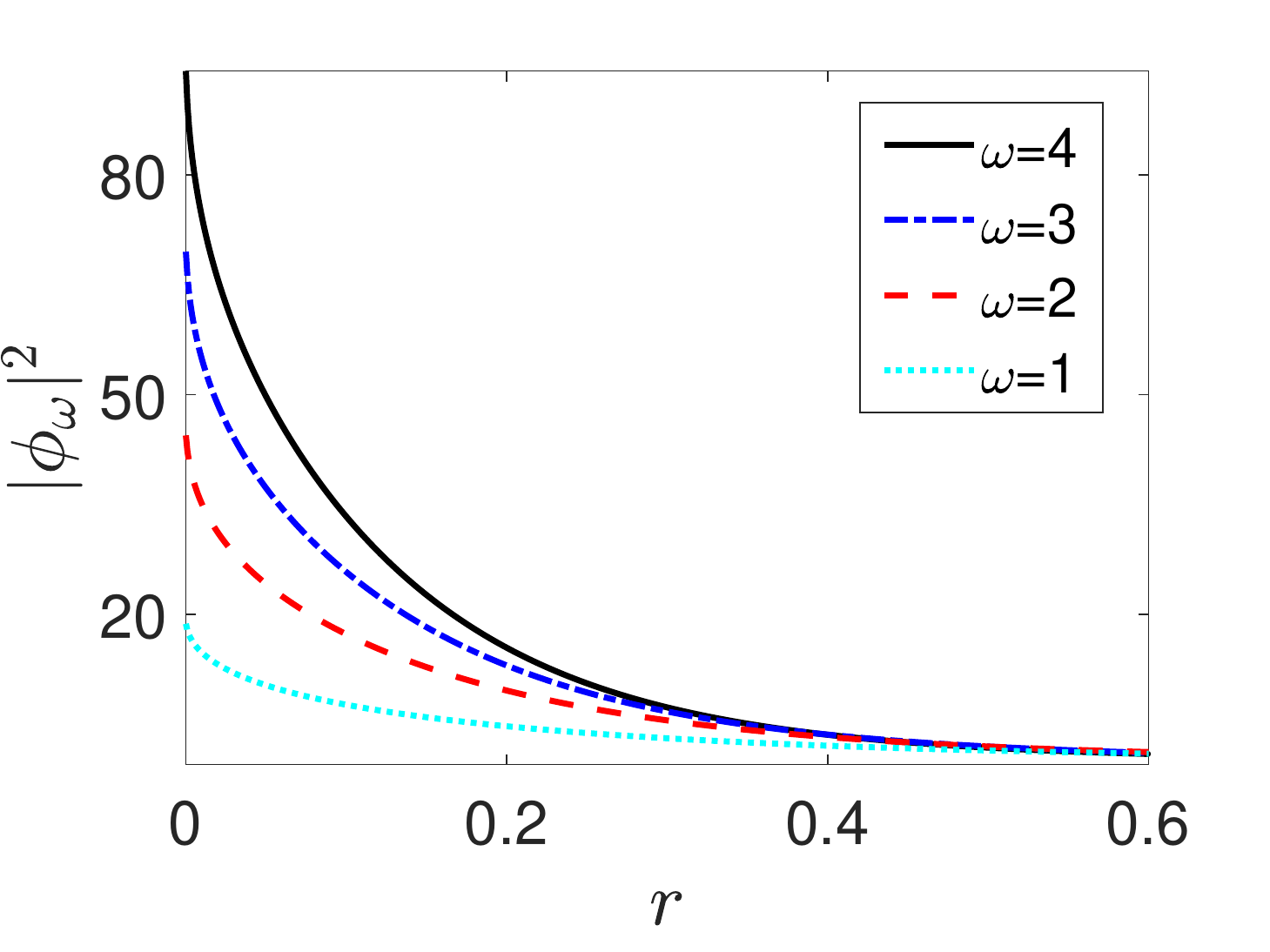}}
			\subfloat{\includegraphics[width=0.475\textwidth]{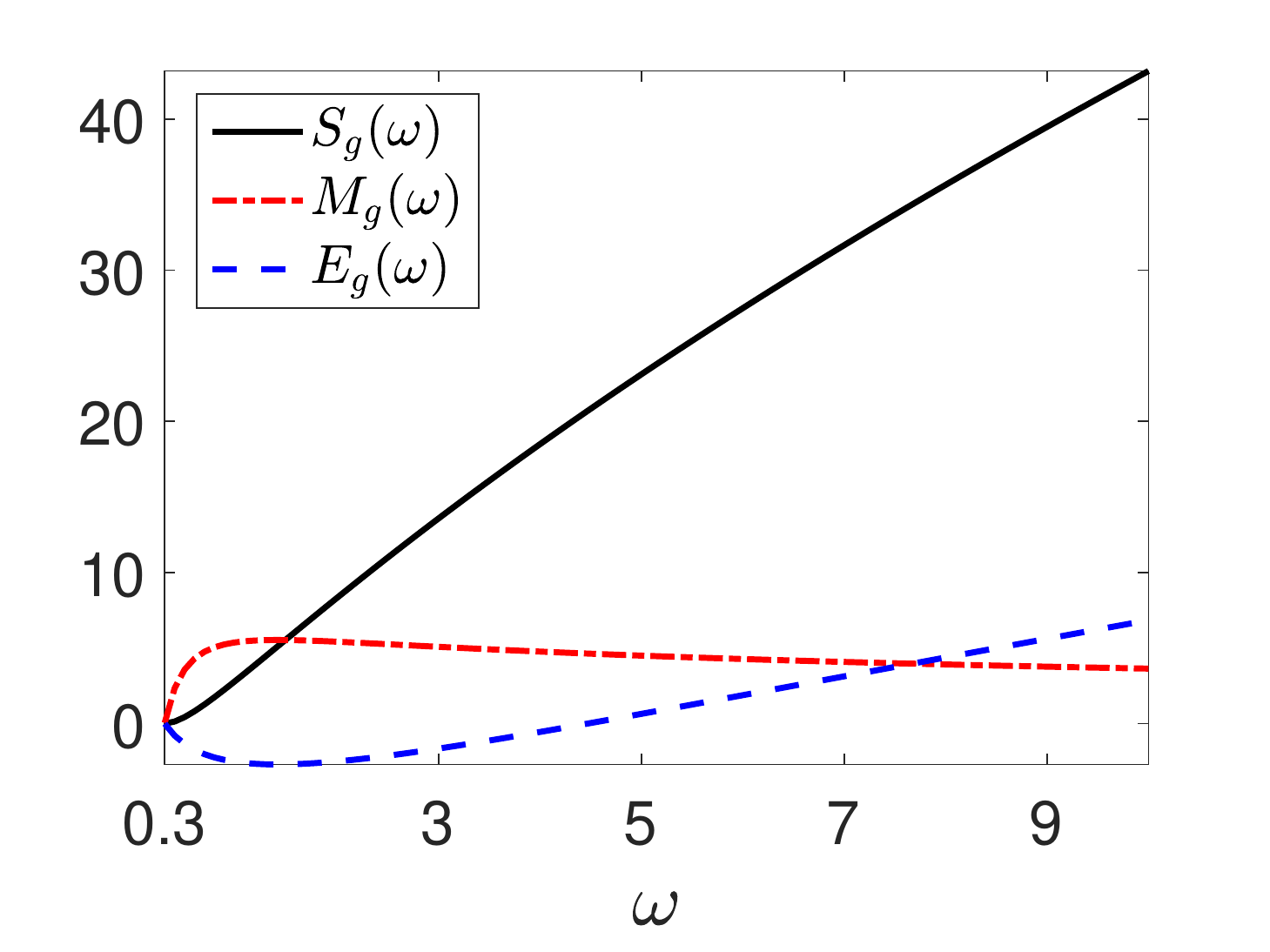}}
			\caption{density of the least action ground states in radial variable (left) and change of $ S_g $, $ M_g $ and $ E_g $ (right) for Case I (top), Case II (middle) and Case III (bottom) in \cref{exmp:inverse_power}}
			\label{fig:inverse_power_ground_states}
		\end{figure}
	\end{exmp}
	
	\begin{exmp}\label{exmp:well}
		Least action ground states with finite well potential
		\begin{equation}
			V(\vx) = \left\{
			\begin{aligned}
				&-Z, &&\vx \in U \subset \R^d, \\
				&0,  &&\vx \notin U. 
			\end{aligned}
			\right. 
		\end{equation}
		We consider the following three cases in 1D, 2D and 3D:  
		\begin{description}
			\item [Case I.] $ d=1 $, $ U = (-2, 2) $, $ Z = 2 $, $ \alpha = 1 $, $ \omega_0 \approx 1.6685 $; 
			\item [Case II. ] $ d=2 $, $ U = \{\vx \in \R^2:|\vx| < 2\} $, $ Z = 2 $, $ \alpha = 1 $, $ \omega_0 \approx 1.2433 $; 
			\item [Case III. ] $ d=3 $, $ U = \{\vx \in \R^3:|\vx| < 2\} $, $ Z = 2 $, $ \alpha = 1 $, $ \omega_0 \approx 0.7544 $.
		\end{description}
		 
		 Similar to \cref{exmp:inverse_power}, the symmetry of the potential function allows us to reduce all the cases to a 1D problem. The least action ground states with different $ \omega $ and the change of $ S_g $, $ M_g $ and $ E_g $ are shown in \cref{fig:well_ground_states}. In computation, for all the cases, we choose $ \Omega= \{\vx \in \R^d : |\vx| < 16 \} $, $ \tau = 1 $, $ \vep = 10^{-9} $, $ h=2^{-8} $ and use linear finite element method for spatial discretization. 
		 
		 Since the singularity of the well potential is in some sense weaker than the delta potential or the inverse power potential, we cannot observe any singularity of the least action ground states around the jump of the potential function as shown in the left column of \mbox{\cref{fig:well_ground_states}}, which suggests that the solution is in $ H^2 $. Also, we would like to mention that we cannot find two different nonnegative least action ground states associated with the same $ \omega $ in our numerical experiments. 
		 
		 For the change of $ S_g(\omega) $, $ M_g(\omega) $ and $ E_g(\omega) $ in the right column of \cref{fig:well_ground_states}, similar observation can be made as in the previous two examples. One difference is that in Case II (which is the $ L^2 $-critical case), the least action ground state energy $ E_g $ also tends to a constant equal to $ -2 M(\phi_1^0) $ due to the special structure of the potential as well as \cref{lim_omega_infty}. However, we have to admit that even though we can observe similar behavior of $ M_g(\omega) $ in this example, it is not clear whether the criterion \ref{cri1}-\ref{cri2} can be used here to deduce the stability or instability of the standing wave solutions. 
		\begin{figure}[htbp]
			\centering
			\subfloat{\includegraphics[width=0.475\textwidth]{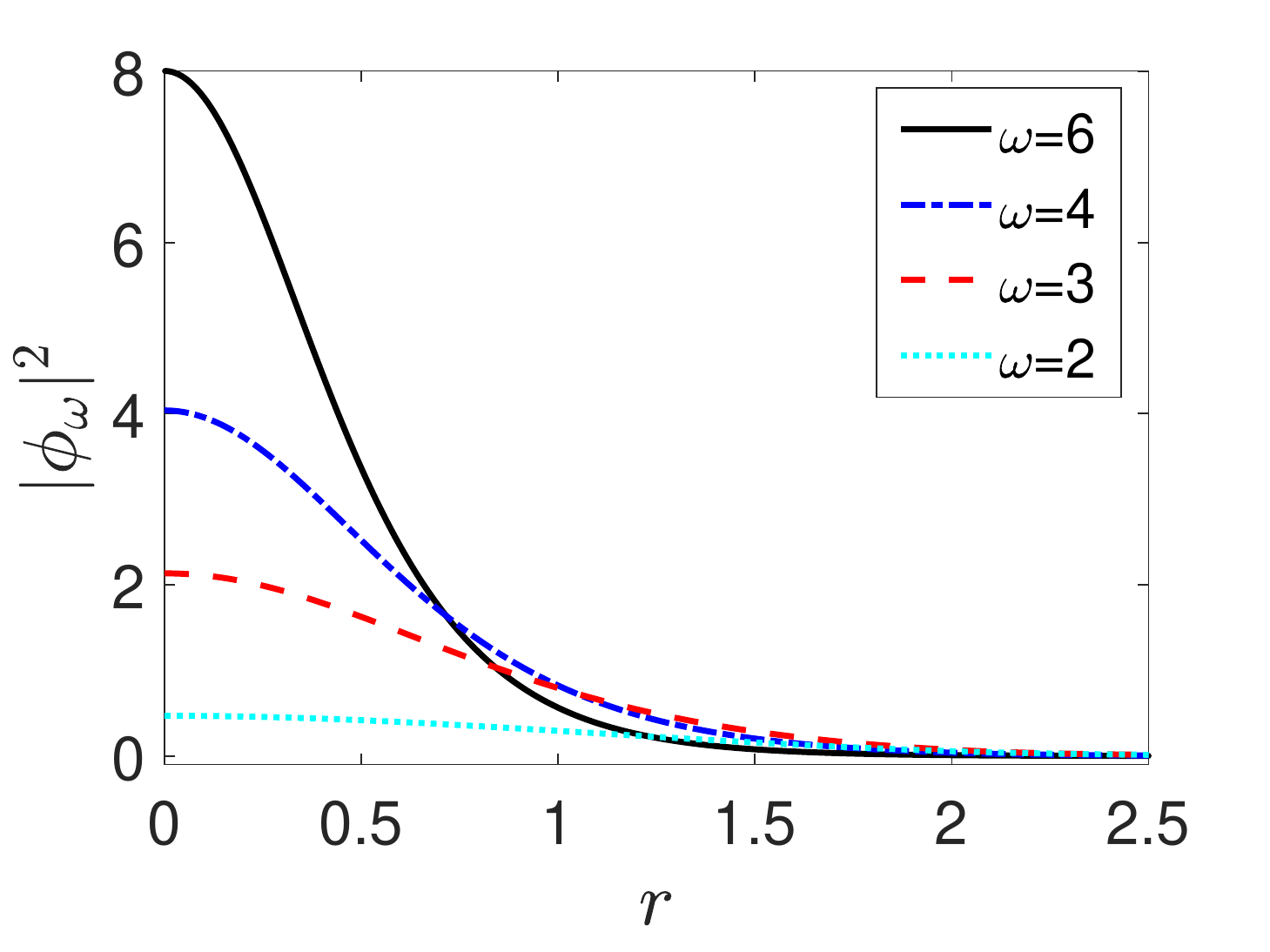}}
			\subfloat{\includegraphics[width=0.475\textwidth]{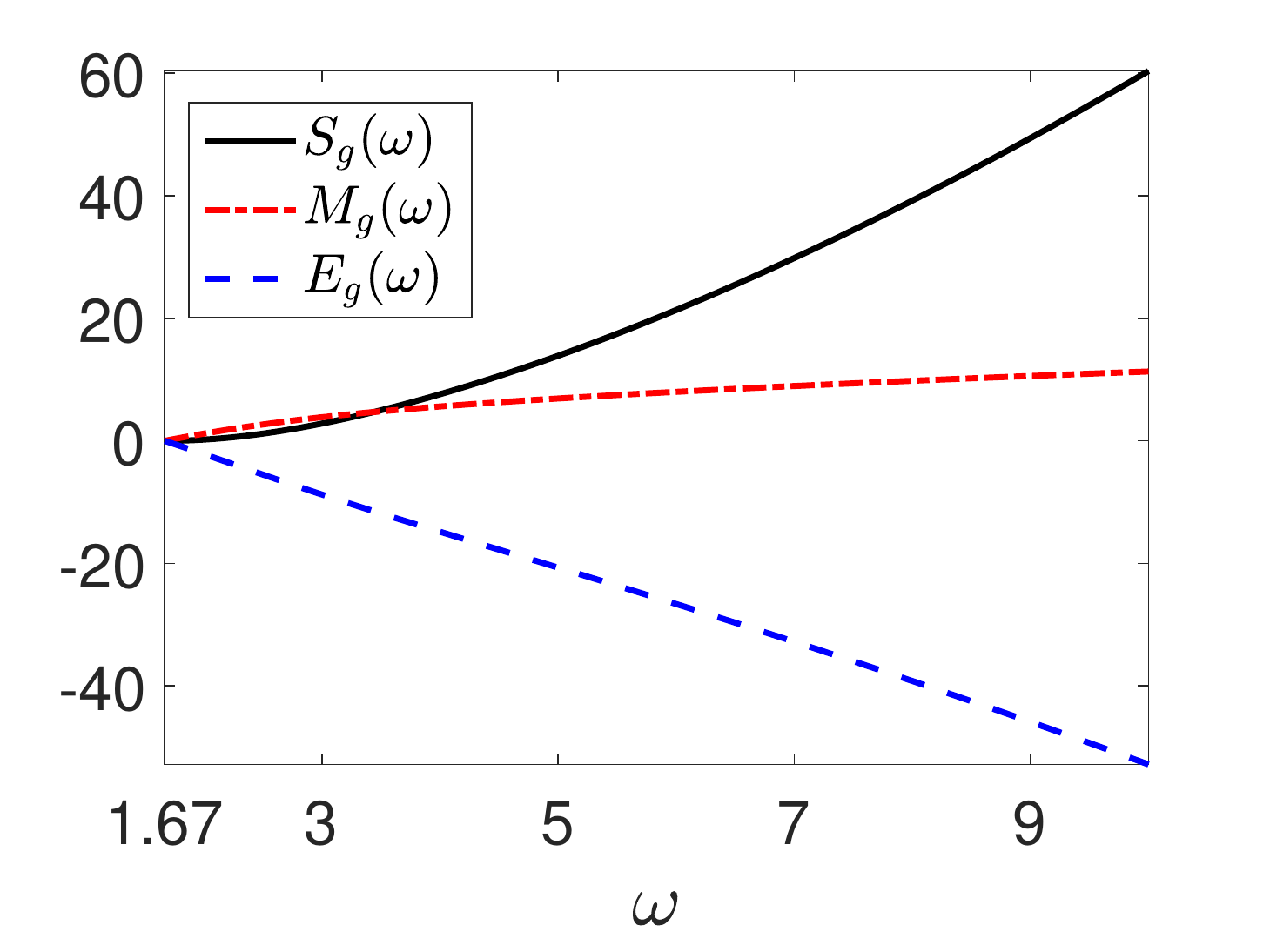}}\\
			\subfloat{\includegraphics[width=0.475\textwidth]{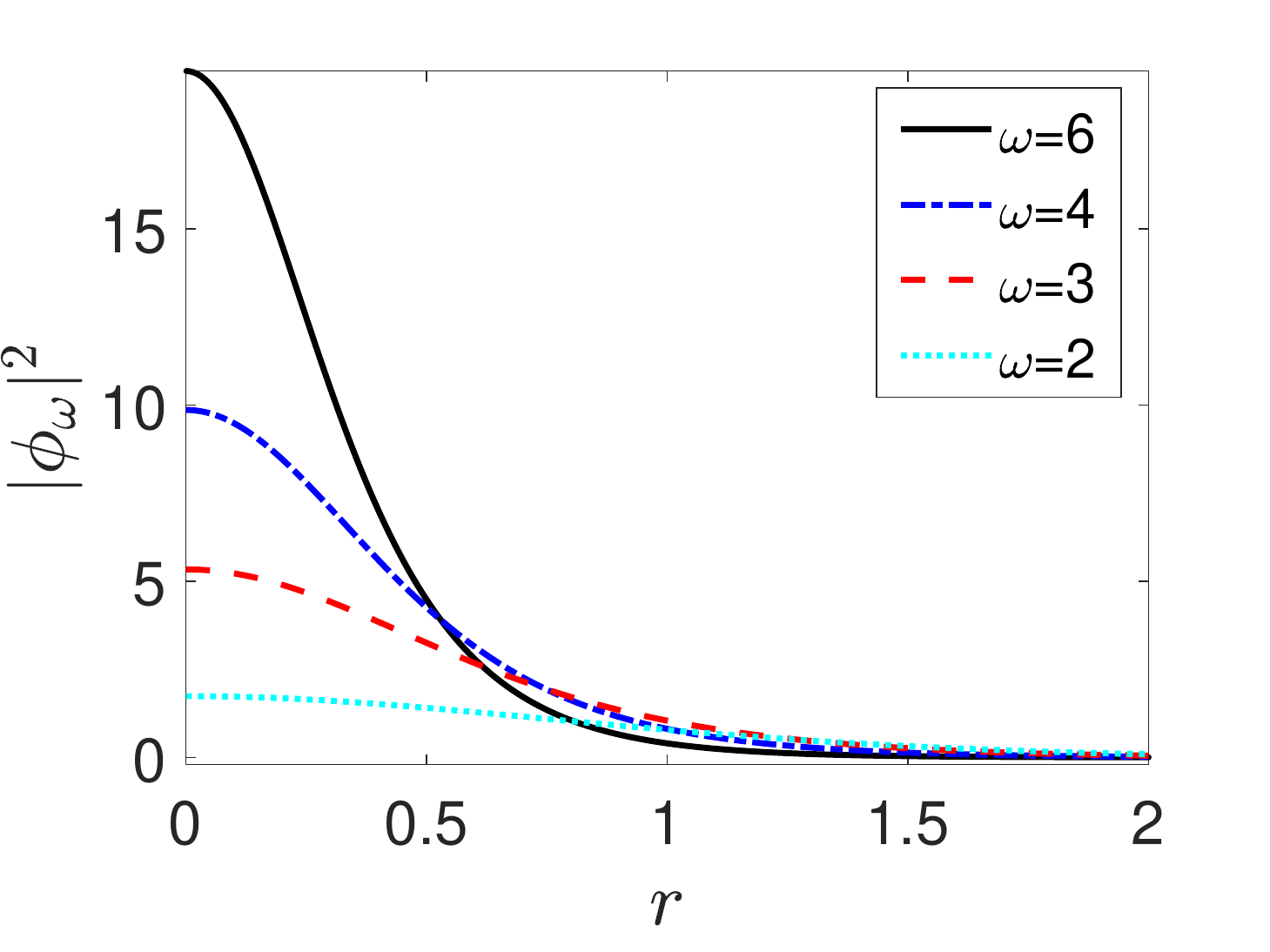}}
			\subfloat{\includegraphics[width=0.475\textwidth]{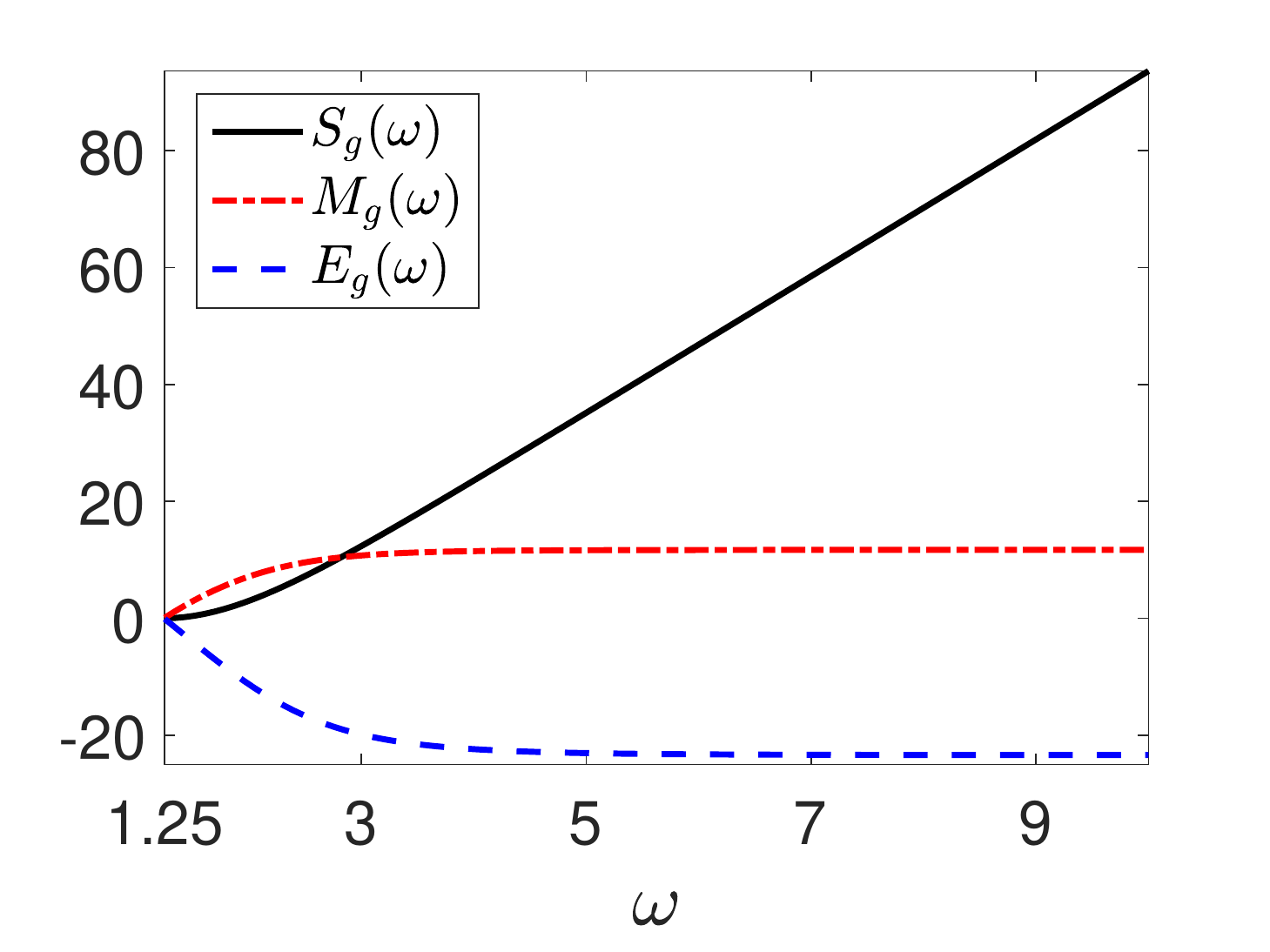}}\\
			\subfloat{\includegraphics[width=0.475\textwidth]{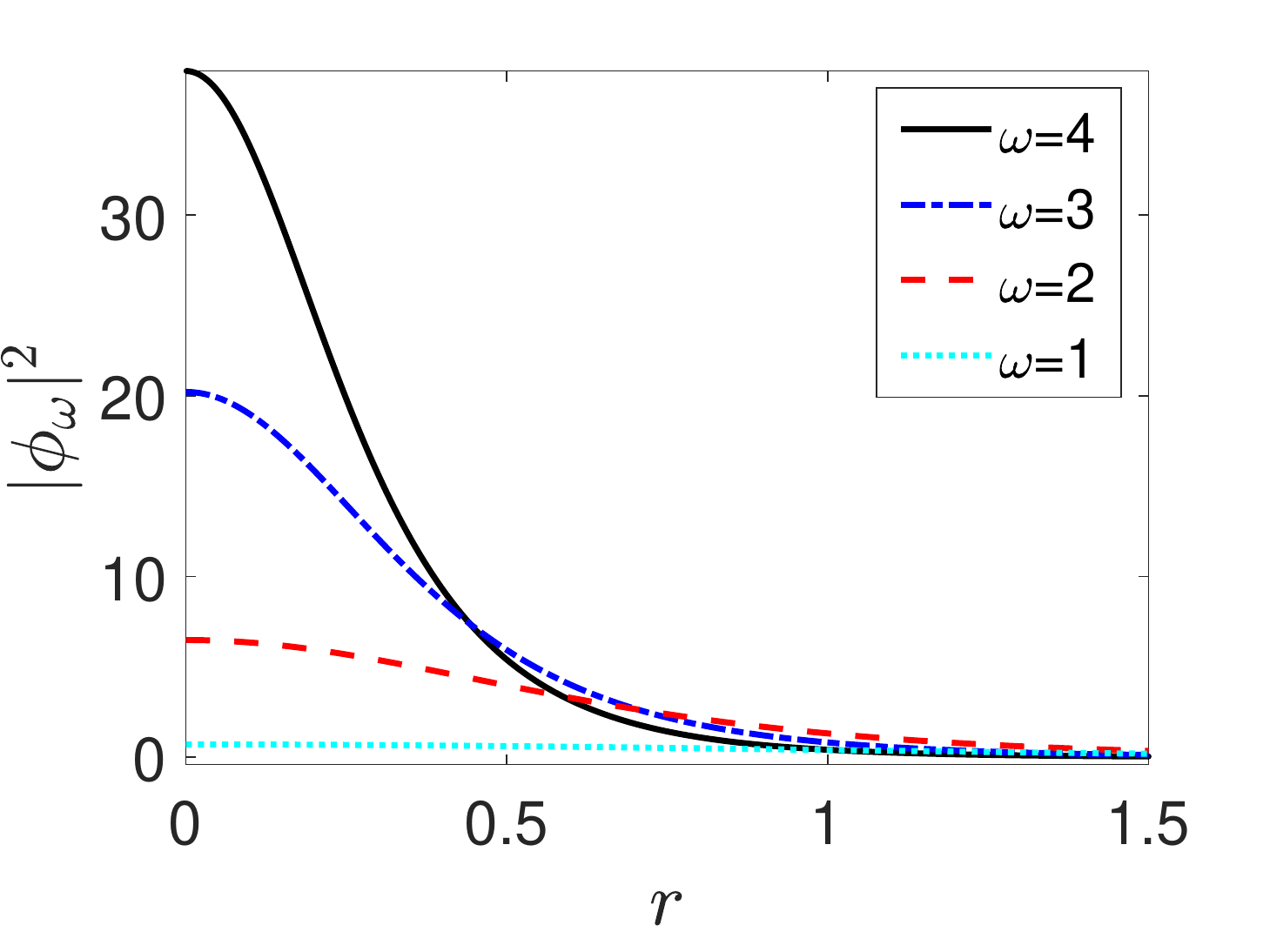}}
			\subfloat{\includegraphics[width=0.475\textwidth]{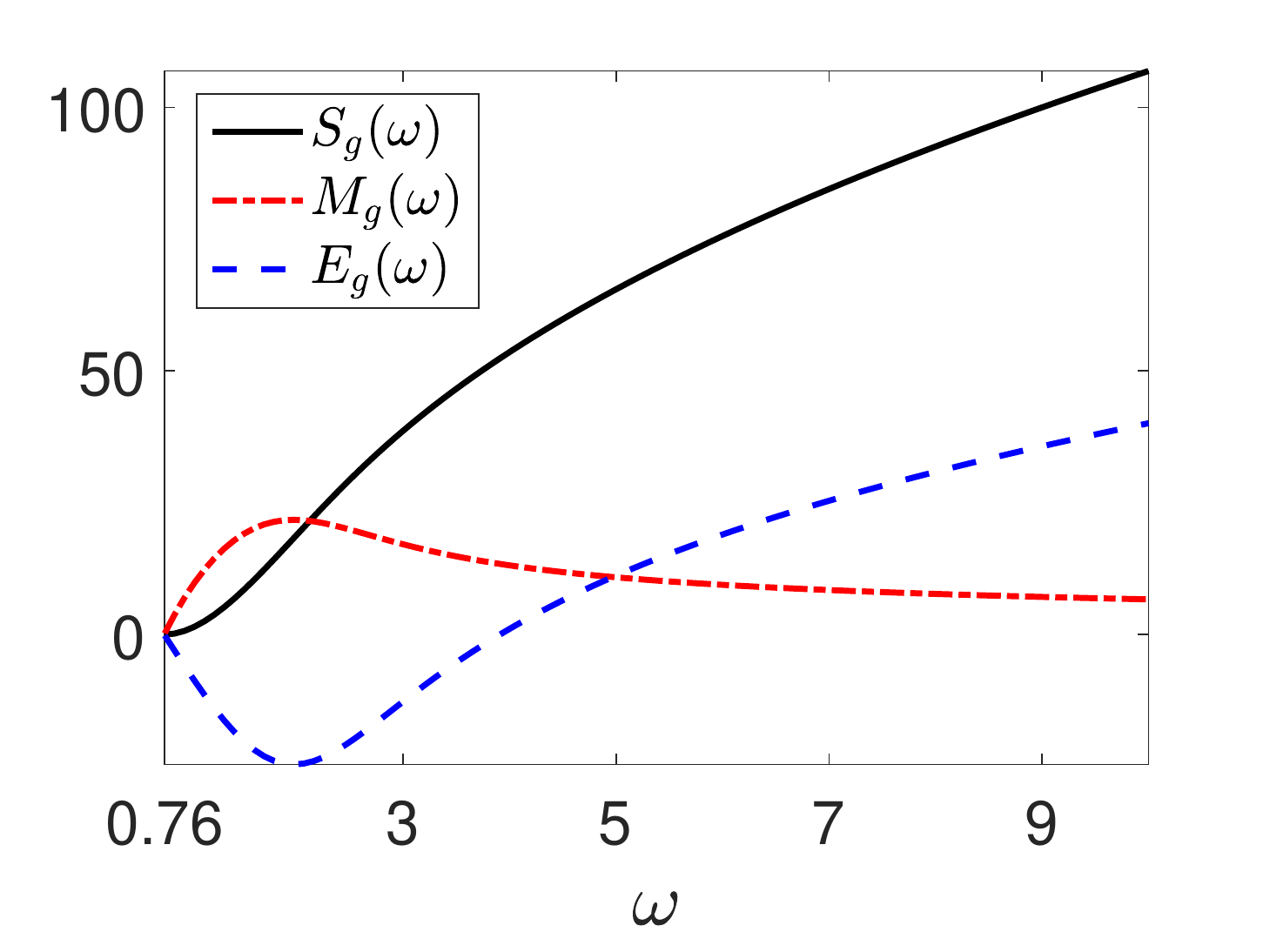}}
			\caption{density of the least action ground states in radial variable (left) and change of $ S_g $, $ M_g $ and $ E_g $ (right) for Case I (top), Case II (middle) and Case III (bottom) in \cref{exmp:well}}
			\label{fig:well_ground_states}
		\end{figure}
	\end{exmp}
	
	\begin{exmp}\label{exmp:smooth}
		In this example, we are devoted to showing some least action ground states with smooth potentials in 1D and 2D. We will use sine pseudospectral method or finite difference method for spatial discretization. The 1D example is solved with a double well potential
		\begin{equation}\label{smooth_1D}
			V(x) = -e^{-(x-2)^2} - e^{-(x+2)^2}, \quad x \in \R. 
		\end{equation}
		For this potential, $ \omega_0 \approx 0.4277 $. With a double well potential, the symmetry breaking bifurcation will happen when the ground state mass is larger than some critical value, and the corresponding least action ground state will concentrate at one well \cite{sym_breaking}. Due to this phenomenon, we have to choose initial data properly (shifted gaussian for example). In computation, we choose 
		\begin{equation*}
			V \text{ in }\cref{smooth_1D}, \quad \alpha = \frac{1}{2}, \quad \tau = 4, \quad \Omega = [-16, 16], \quad \vep = 10^{-14}, \quad h = \frac{1}{2^{10}}
		\end{equation*}
		and use finite difference method for spatial discretization. We show the least action ground states with different $ \omega $ and the change of ground state action, mass and energy with respect to $ \omega $ in \cref{fig:smooth_1D}. The symmetry breaking bifurcation occurs when $ \omega \approx 0.52 $. We also observe in the numerical experiment that near bifurcation, the GFDN-BF scheme converges much slower. The algorithm takes almost 10 times more steps to converge for $ \omega = 0.52 $ than $ \omega = 0.53 $ with initial data $ \phi_0 $ being a shifted Gaussian away from origin by 2 units. This phenomenon will be further investigated in our future work. 
		\begin{figure}[htbp]
			\centering
			\subfloat{\includegraphics[width=0.475\textwidth]{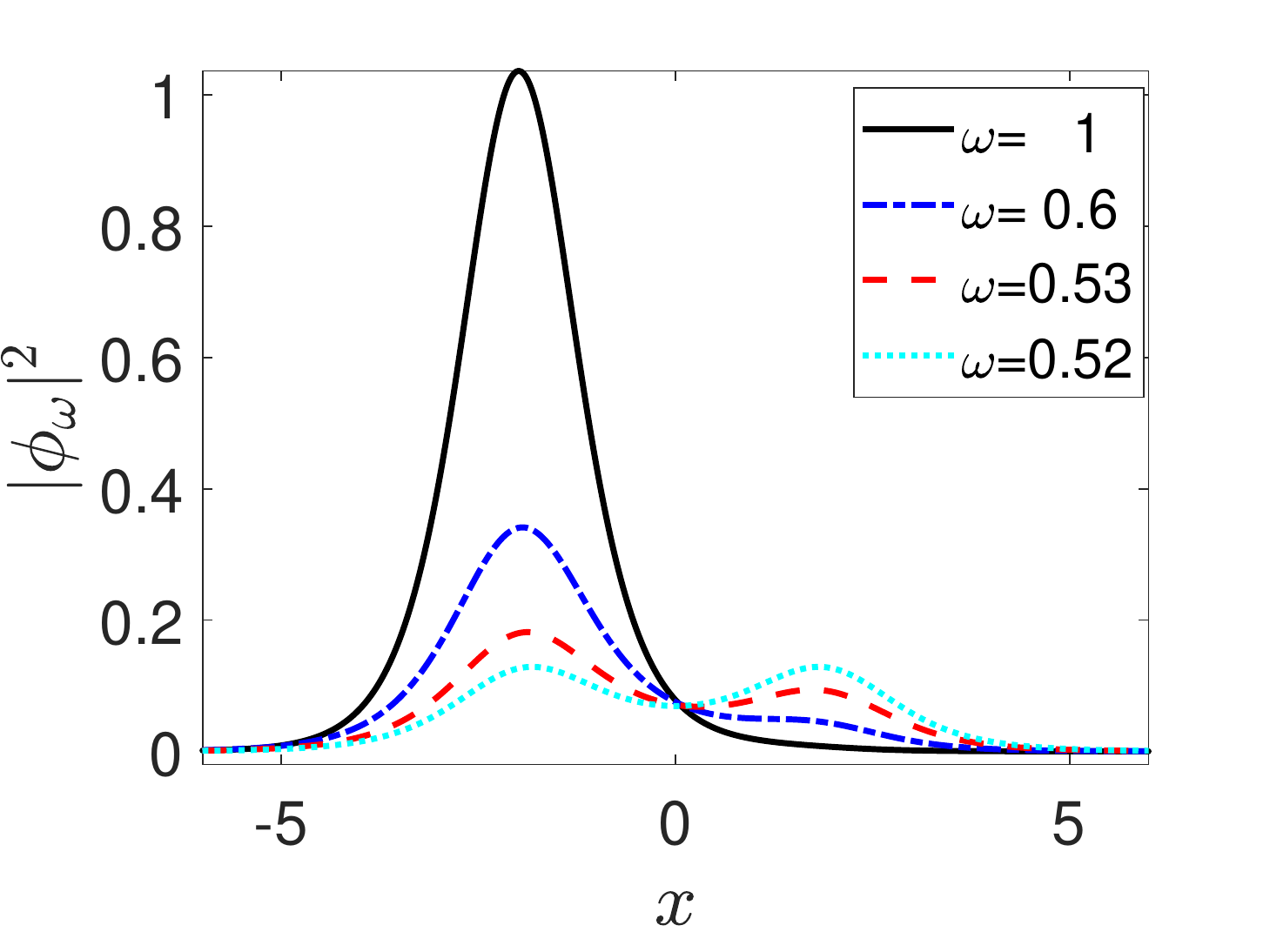}}
			\subfloat{\includegraphics[width=0.475\textwidth]{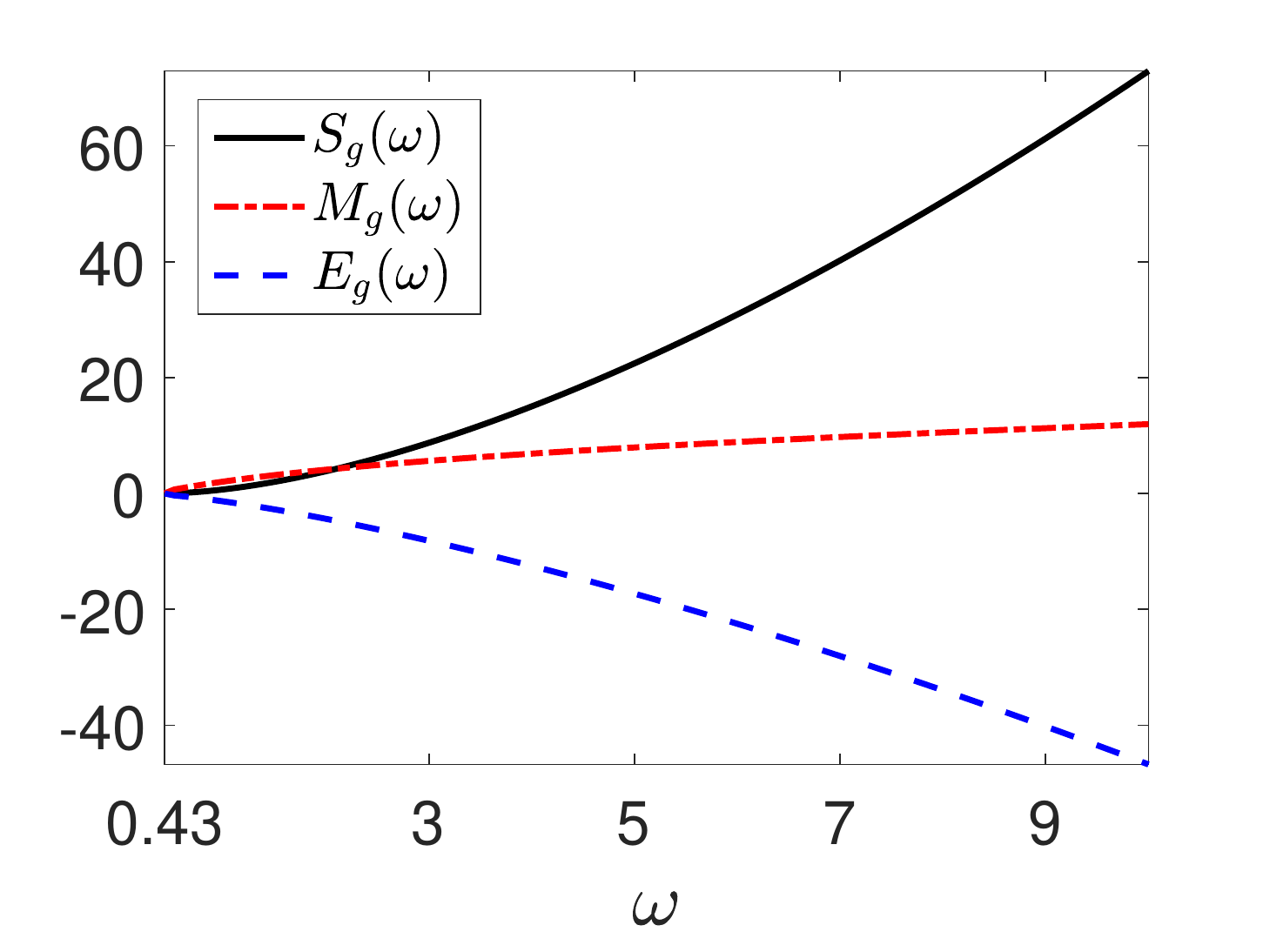}}
			\caption{density of the least action ground states with different $ \omega $ (left) and change of $ S_g $, $ M_g $ and $ E_g $ (right)}
			\label{fig:smooth_1D}
		\end{figure}
		
		For the 2D example, we consider the following potential
		\begin{equation}
			V(\vx) = -2e^{-|\vx|^2}[1-\cos(\pi x)] [1-\cos(\pi y)], \quad \vx = (x, y) \in \R^2. \label{V_1_smooth}
		\end{equation}
		This potential will also be considered in \cref{sec:action_vs_energy} as a nontrivial example to show the relationship between the least energy ground state and the least action ground state. 
		For this potential, $ \omega_0 \approx 0.4652 $ and the least energy ground state exists for any $ m > 0 $ if $ 0<\alpha<1 $  \cite{ikoma2020}. 
		Then we show the least action ground states with $ \omega =1, 2, 3 $, which are also least energy ground states with $ m \approx 4.78, 26.88, 46.80 $ considered in \cref{sec:action_vs_energy}. In computation, we choose
		\begin{equation*}
			V \text{ in } \cref{V_1_smooth} , \quad \alpha = \frac{1}{2}, \quad \Omega = [-16, 16]^2, \quad \tau = 1, \quad h = \frac{1}{2^5}. 
		\end{equation*}
		We use sine pseudospectral method for spatial discretization. Density of the least action ground states with $ \omega = 1, 2, 3 $ and change of ground state action, mass and energy with respect to $ \omega $ are shown in \cref{fig:smooth_ground_states}. Similarly, for this potential, we can observe the symmetry breaking bifurcation. 
		\begin{figure}[htbp]
			\centering
			\subfloat{\includegraphics[width=0.325\textwidth]{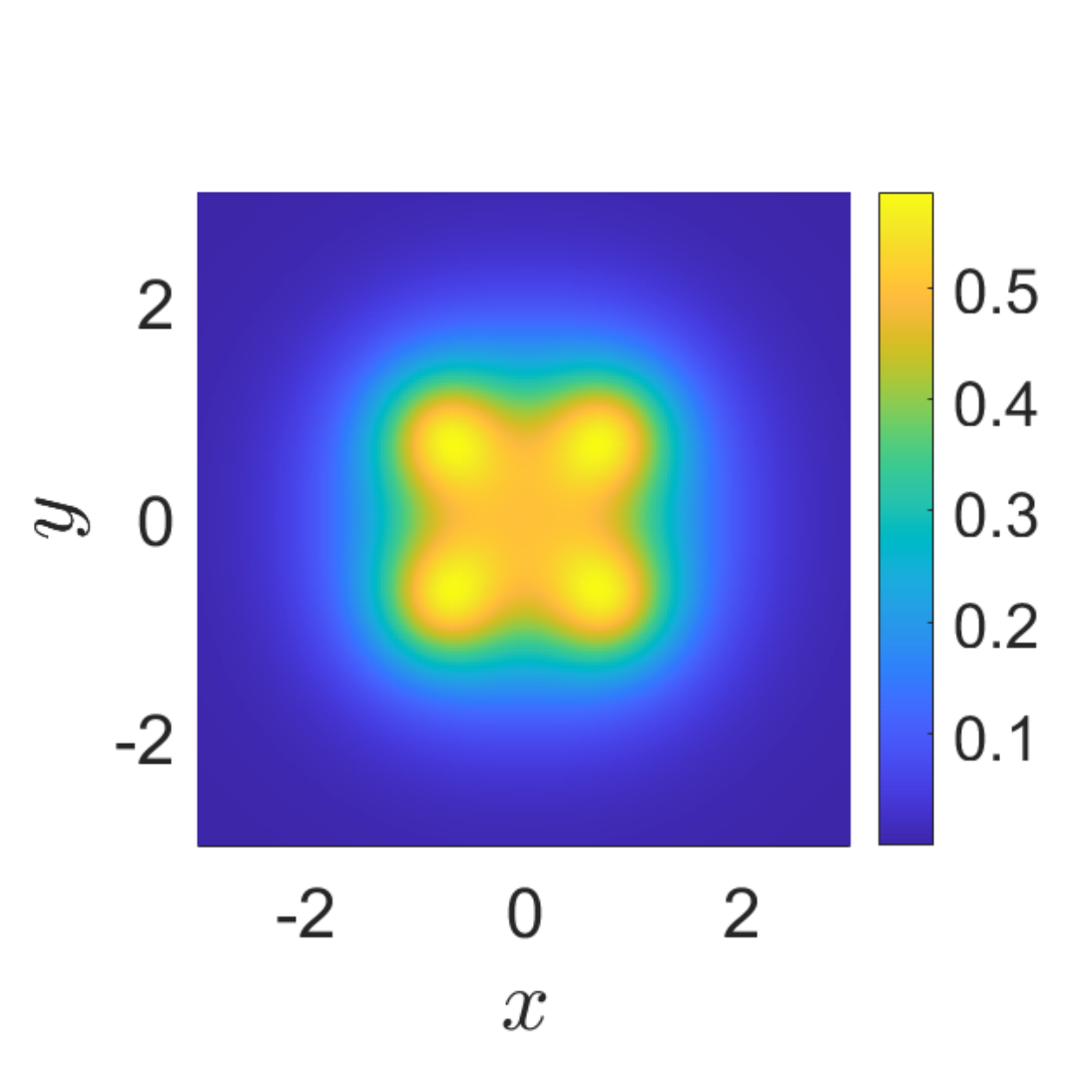}}
			\subfloat{\includegraphics[width=0.325\textwidth]{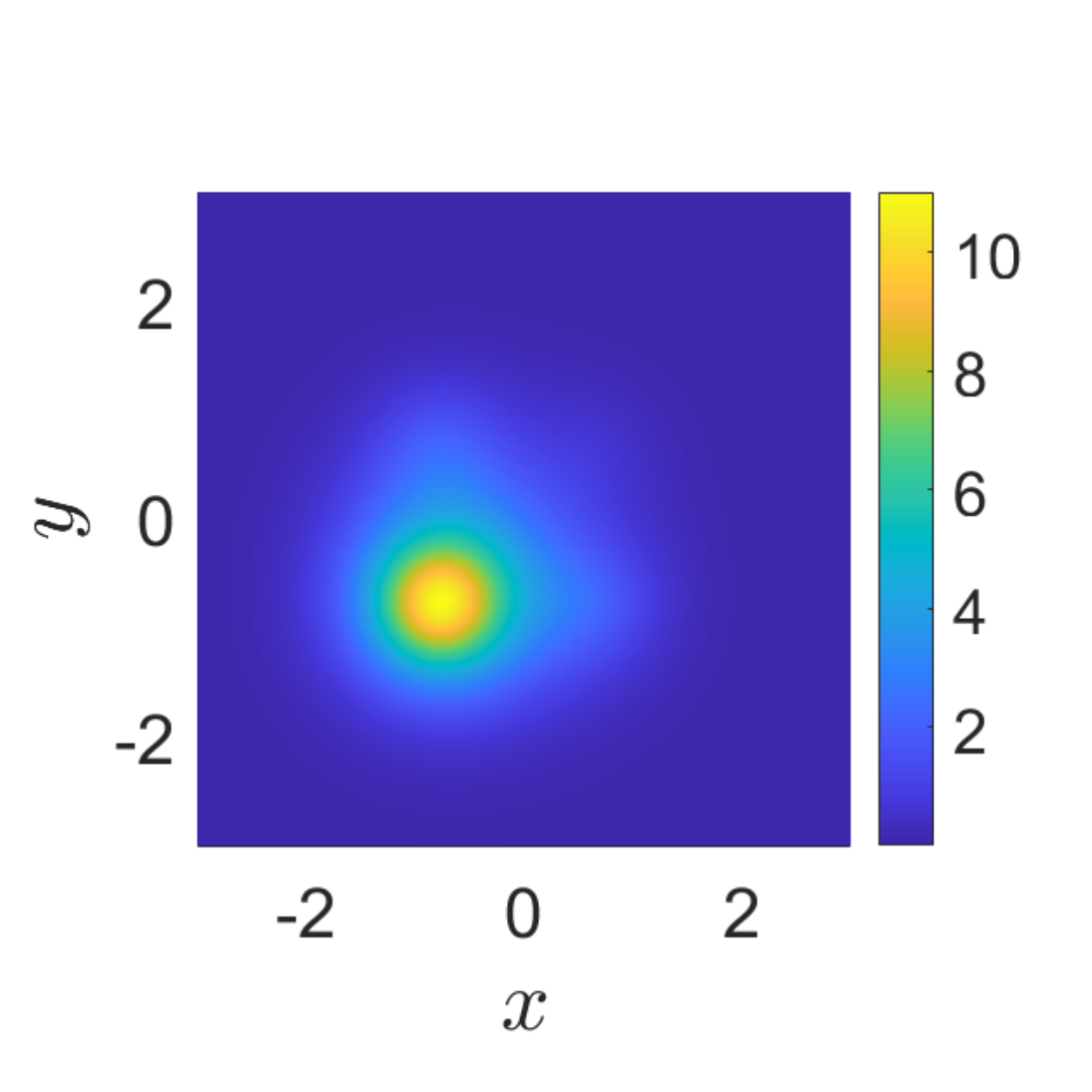}}
			\subfloat{\includegraphics[width=0.325\textwidth]{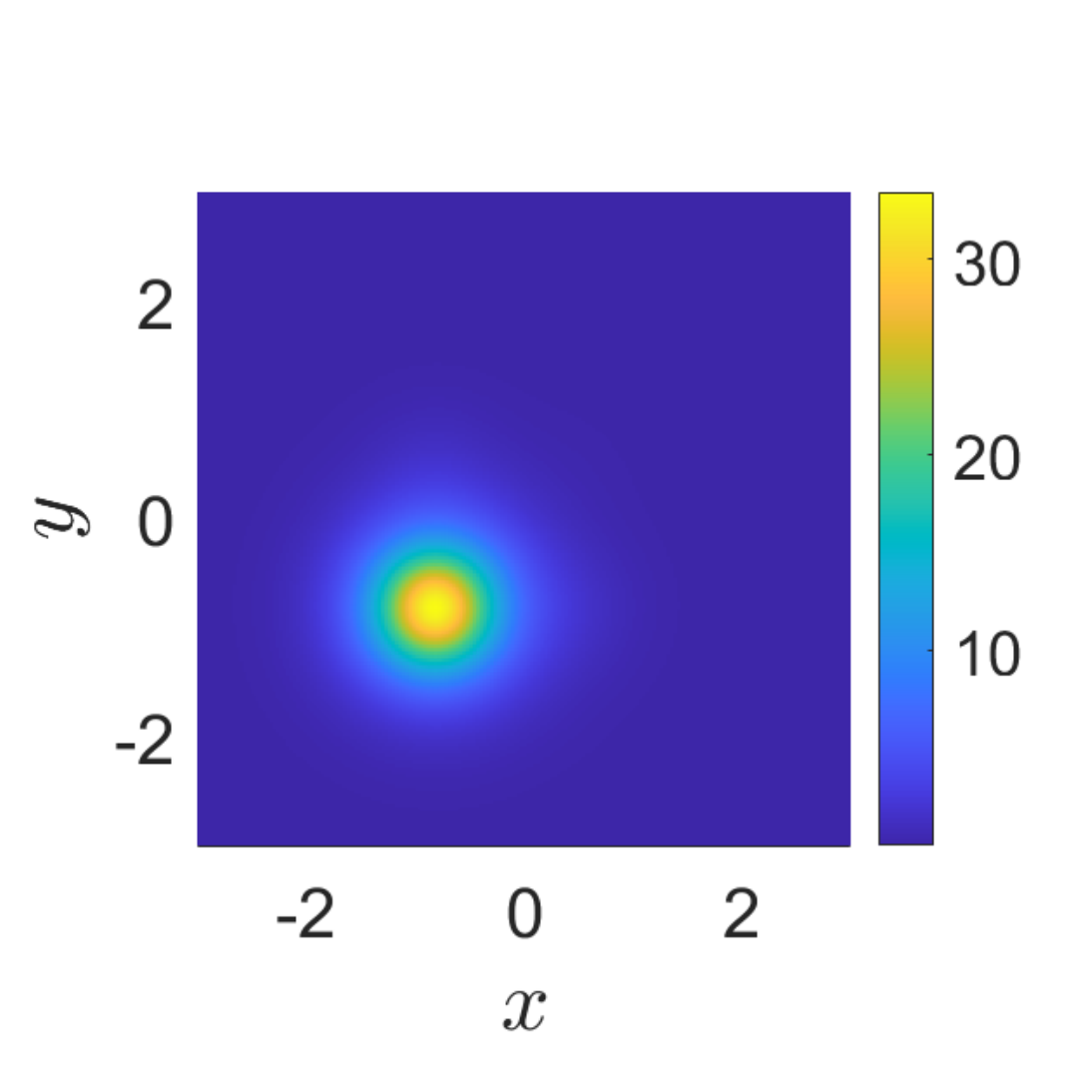}}\\\vspace{-0.35cm}
			\subfloat{\includegraphics[width=0.475\textwidth]{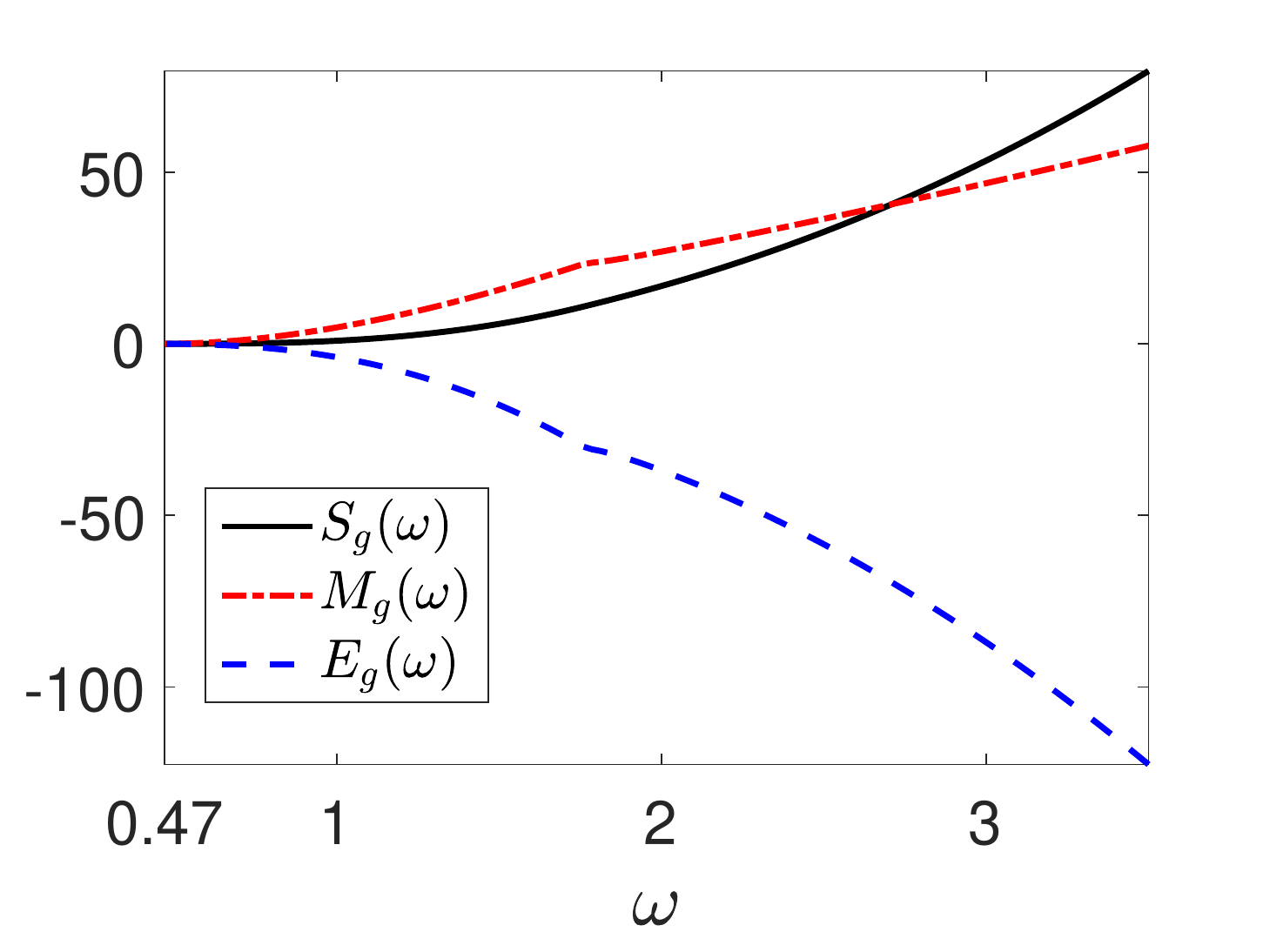}}\vspace{-0.35cm}
			\caption{density of the least action ground states with $ \omega = 1 $ (top left), $ \omega = 2 $ (top middle), $ \omega = 3 $ (top right) and change of $ S_g $, $ M_g $ and $ E_g $ (bottom)}
			\label{fig:smooth_ground_states}
		\end{figure}
	\end{exmp}
	
	\subsection{Verification of existing results and application to some conjectures}\label{verification}
	In this subsection, we first use our algorithm to verify the relationship between the least energy ground state and the least action ground state which is proved very recently in \cite{dovetta2021}. We also make a conjecture about the orbital stability. 
	
	\subsubsection{Relationship between the least energy ground state and the least action ground state}\label{sec:action_vs_energy}
	In this example, we shall verify the relationship between the least action ground state and the least energy ground state obtained in \cite{dovetta2021}. 
	By Theorem 1.3 in \cite{dovetta2021}, if $ \phi_m^E $ is a least energy ground state with prescribed mass $ m $ and associated Lagrangian multiplier $ \mu^g = \mu^g(\phi^E_m) $, then
	\begin{enumerate}
		\item[i)] $ \phi^E_m $ will be a least action ground state of \cref{SNLS} with $ \omega = - \mu^g $; 
		\item[ii)] Any least action ground states of \cref{SNLS} with $ \omega = - \mu^g $ have mass $ m $ and are also least energy ground states. 
	\end{enumerate}
	Even though the results in \cite{dovetta2021} are proved for the non-potential case on general domains, they are still valid if the variational characterization \cref{minimization} by means of Nehari manifold is valid, which, in particular, implies that their results are valid in our setting. 
	
	To verify this result, we choose the smooth potential \cref{V_1_smooth} in \cref{exmp:smooth}. Recall that for this potential, we have $ \omega_0 \approx 0.4652 $ and the least energy ground state exists for all $ m > 0 $. 
	Then we shall compare the least energy ground states and the least action ground states computed by our algorithm. We consider the case $ \alpha = 1/2 $, which is $ L^2 $-subcritical in 2D. In detail, we choose a sequence of $ m $ from $ 0.1 $ to $ 100 $ and compute the least energy ground states $ \phi^E_m $ as well as the Lagrangian multiplier $ \mu^g(m) = \mu^g(\phi^E_m) $ by the algorithm given in \cite{bao2006,liu2021}. Then we compute the least action ground state $ \phi_\omega $ with $ \omega = -\mu^g $ and the corresponding mass $ M_g(\omega) = M(\phi_\omega) $. We found that $ -\mu^g(m) $ as a function of $ m $ and $ M_g(\omega) $ as a function of $ \omega $ are all monotonically increasing and $ \omega = -\mu^g(m) $ is the same as the inverse function of $ m = M_g(\omega) $ (see \cref{fig:action_energy_diff}). This suggests that the least energy ground states are also the least action ground states. 
	
	Moreover, the numerical result shows that $ m \rightarrow -\mu^g(m) $ seems to be a bijection between $ (0, \infty) $ and $ (\omega_0, \infty) $. If this indeed holds, then we can conclude from i) and ii) above that the least action ground state $ \phi_\omega $ for any $ \omega > \omega_0 $ will also be a least energy ground state with $ m = \| \phi_\omega \|^2_{L^2} $. 
	\begin{figure}
		\centering\vspace{-0.5cm}
		\subfloat{\includegraphics[width=0.5\textwidth]{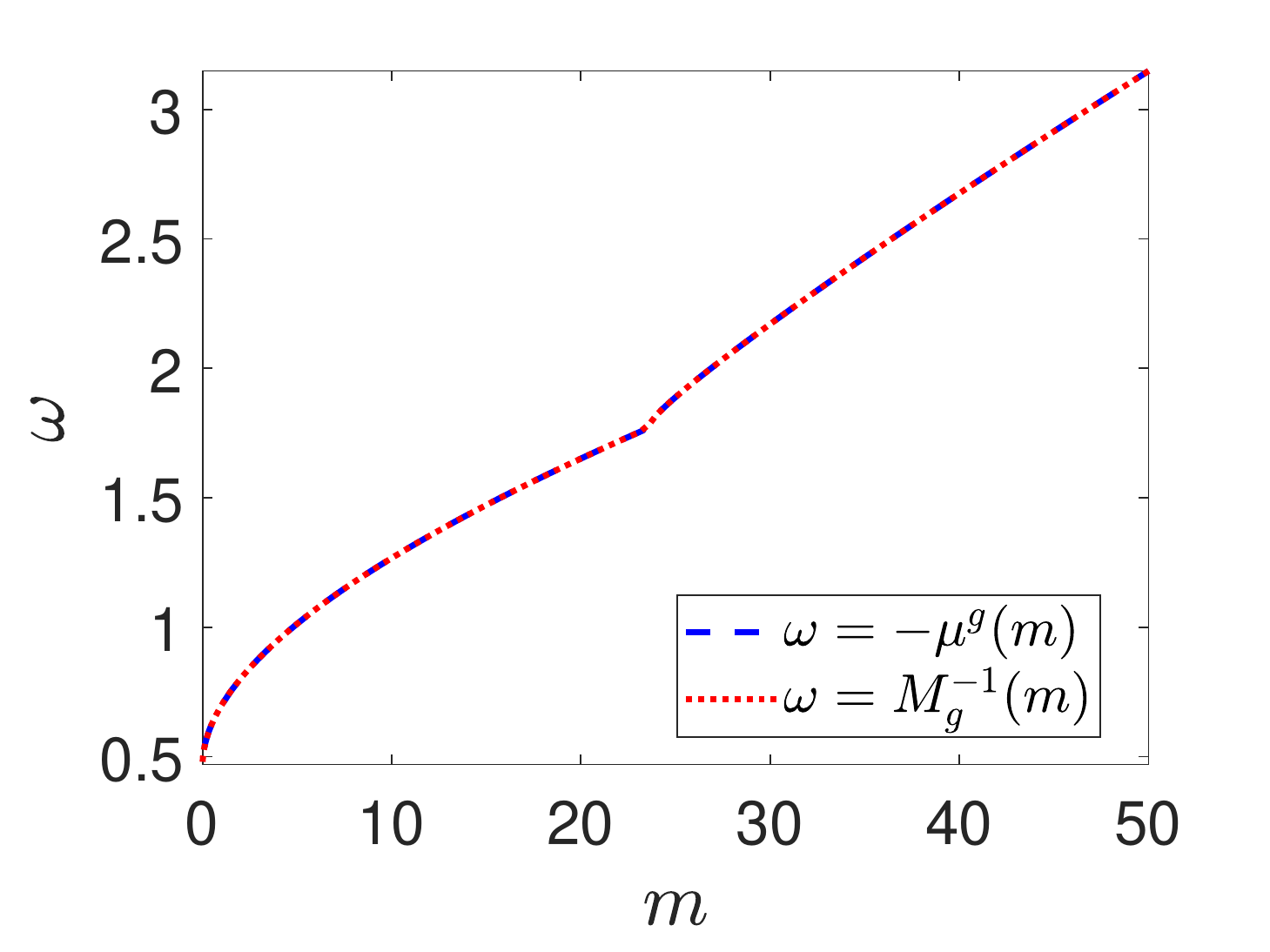}}\vspace{-0.35cm}
		\caption{the plot of $ m \rightarrow -\mu^g(m) $ and the inverse function of $ \omega \rightarrow M_g(\omega) $}
		\label{fig:action_energy_diff}\vspace{-0.5cm}
	\end{figure}
	
		
	
	\subsubsection{Orbital stability and instability}\label{sec:stability}
	In this example, we shall examine the orbital stability and instability of standing waves when $ V $ is the attractive inverse power potential \cref{inver_power_potential_in_example} considered in \cref{exmp:inverse_power}. Thanks to the result in \cite{fukaya2021}, we can use the criterion \ref{cri1}-\ref{cri2} due to Grillakis, Shatah and Strauss. To our knowledge, no complete answer is known and some partial results can be found in \cite{fukaya2021,fukuizumi2003,li2020}. We shall use our numerical results to make a conjecture. 
	\begin{definition}\label{def:stability}
		We say that $ e^{i \omega t} \phi(\vx) $ is (orbitally) stable if for any $ \vep>0 $, there exists $ \delta > 0 $ such that for any $ \phi_0 \in H^1(\R^d) $ satisfying
		\begin{equation*}
			\inf_{\theta \in \R} \| \phi_0 - e^{i \theta} \phi \|_{H^1(\R^d)} < \delta, 
		\end{equation*}
		the solution $ \psi $ of \cref{NLSE} with initial data $ \psi(\cdot, 0) = \phi_0 $ satisfies
		\begin{equation*}
			\inf_{\theta \in \R} \| \psi(\cdot, t) - e^{i \theta} \phi \|_{H^1(\R^d)} < \vep 
		\end{equation*}
		for any $ t \geq 0 $. Otherwise, $ e^{i \omega t} \phi(\vx) $ is said to be unstable. 
	\end{definition}
	
	In \cref{inverse_power_stability}, we plot $ M_g(\omega) $ for the 1D, 2D and 3D cases respectively with three different $ \alpha $. We choose $ \gamma = 1 $ and $ \sigma = d/2 $ (similar results can be observed for other choices of $ \gamma $ and $ \sigma $). In each figure, we choose three $ \alpha $ to show the results in $ L^2 $-subcritical, $ L^2 $-critical and $ L^2 $-supercritical regimes. From our numerical results, we have the following conjecture, which is roughly the same as Theorem 1 in \cite{fukuizumi2008} for the attractive delta potential. 
	\begin{conj}\label{conjecture}
		Let $ V $ be the attractive inverse power potential in \cref{inver_power_potential_in_example} and $ \phi_\omega $ be the unique positive least action ground state. Then
		\begin{enumerate}[label = \roman*)]
			\item if $ 0<\alpha\leq 2/d $, $ e^{i\omega t}\phi_\omega(\vx) $ is stable for any $ \omega > \omega_0 $;
			\item if $ 2/d < \alpha < 2/(d-2)_+ $, there exists a unique $ \omega_\text{c}>\omega_0 $ such that $ e^{i\omega t}\phi_\omega(\vx) $ is stable when $ \omega_0<\omega<\omega_\text{c} $ and unstable when $ \omega>\omega_\text{c} $. 
		\end{enumerate}
	\end{conj}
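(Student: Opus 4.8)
The plan is to deduce the conjecture from the Grillakis--Shatah--Strauss criterion \ref{cri1}--\ref{cri2}, whose hypotheses on the spectrum of the linearized operator are supplied for the attractive inverse power potential by \cite{fukaya2021}. Writing $d(\omega) := S_g(\omega) = S_\omega(\phi_\omega)$ and using that $\phi_\omega$ is a critical point of $S_\omega$, one has $d'(\omega) = \lrang{S_\omega'(\phi_\omega)}{\partial_\omega \phi_\omega} + M(\phi_\omega) = M_g(\omega)$, so that the entire statement reduces to controlling the sign of $M_g'(\omega) = d''(\omega)$ on $(\omega_0, \infty)$: stability holds where $M_g'(\omega) > 0$ and instability where $M_g'(\omega) < 0$.

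First I would pin down the two ends of the ground-state branch. Near $\omega_0$, the convergence $\widehat\phi_\omega \to \phi_0^{\text{lin}}$ in \cref{lim_omega_0} together with the bifurcation analysis of \cite{fukuizumi2003} gives stability, hence $M_g'(\omega) > 0$, for every $\alpha < 2/(d-2)_+$ and $\omega$ close to $\omega_0$. For large $\omega$, the rescaling \cref{rescaling} and the limit \cref{lim_omega_infty} yield $M_g(\omega) = \| \phi_\omega \|_{L^2}^2 \sim \| \phi_1^0 \|_{L^2}^2 \, \omega^{\frac{1}{\alpha} - \frac{d}{2}}$, whose exponent is positive for $\alpha < 2/d$, zero for $\alpha = 2/d$ (the threshold \cref{mass_thereshold}) and negative for $\alpha > 2/d$; the last case matches the instability for large $\omega$ proved in \cite{fukuizumi_unstable}. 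These endpoints already force at least one critical point of $M_g$ in the supercritical case, and are consistent with monotonicity in the subcritical and critical cases.

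The remaining content is the global behaviour of $M_g$. For part i) I would try to bypass the criterion in the subcritical and critical regimes and instead use the variational (concentration--compactness) stability of the set of $L^2$-constrained energy minimizers in the spirit of \cite{cazenave_lions}, exploiting the least-energy/least-action correspondence (as in \cite{dovetta2021} and the discussion around \cref{minimization}) and the existence of energy minimizers from \cite{li2020}; this would identify every admissible $\omega > \omega_0$ with a subcritical mass level and give stability directly, avoiding any monotonicity computation. For part ii) the decisive point is to show that $M_g$ is unimodal, i.e. that $d''$ changes sign exactly once, which then produces the unique $\omega_c > \omega_0$.

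The hard part is exactly this global sign control of $d''(\omega) = M_g'(\omega)$. Differentiating the ground-state equation in $\omega$ gives $L_+ \partial_\omega \phi_\omega = -\phi_\omega$ with $L_+ = -\Delta + V + \omega - (2\alpha+1)\phi_\omega^{2\alpha}$, so that $M_g'(\omega) = -2 \lrang{\phi_\omega}{L_+^{-1}\phi_\omega}$; since $L_+$ carries a single negative eigenvalue along the branch, the sign of this quadratic form is genuinely delicate and may flip. Unlike the case $V \equiv 0$, where scaling makes $M_g(\omega) \propto \omega^{1/\alpha - d/2}$ and monotonicity transparent, the inverse power potential breaks the exact scaling (it rescales $\gamma$ as well as $\omega$), so establishing that $\lrang{\phi_\omega}{L_+^{-1}\phi_\omega}$ keeps one sign on all of $(\omega_0, \infty)$ for part i), or changes sign exactly once for part ii), appears to require either a global convexity property of $d$ or uniform spectral control of $L_+$ along the whole branch that is not currently available. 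This gap is precisely why the statement is recorded as a conjecture, supported by the numerics of \cref{inverse_power_stability}.
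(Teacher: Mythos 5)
This statement is recorded in the paper as a \emph{conjecture}: the paper offers no proof, only the numerical evidence of \cref{inverse_power_stability} interpreted through the criterion \ref{cri1}--\ref{cri2}, whose applicability for the attractive inverse power potential is supplied by the spectral results of \cite{fukaya2021}. Your proposal rests on exactly the same mechanism --- reduce everything to the sign of $M_g'(\omega)=d''(\omega)$ via $d'(\omega)=M_g(\omega)$ --- and your endpoint analysis (stability near $\omega_0$ from \cite{fukuizumi2003} via \cref{lim_omega_0}; the exponent $\tfrac{1}{\alpha}-\tfrac{d}{2}$ for large $\omega$ from \cref{rescaling} and \cref{lim_omega_infty}; instability for large $\omega$ in the supercritical case from \cite{fukuizumi_unstable}) is correct and consistent with what the paper reviews in \cref{sec:asymtotics}. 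You are also right, and commendably explicit, that the global sign control of $d''$ on all of $(\omega_0,\infty)$ is the genuinely open step; the paper closes this gap only by plotting $M_g(\omega)$.

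Two caveats on the points where you go beyond the paper. First, the Cazenave--Lions route you propose for part i) is more delicate than you suggest at the critical exponent $\alpha=2/d$: by \cite{li2020} the $L^2$-constrained minimizer exists only for masses $0<m<m_c$, and identifying every $\omega>\omega_0$ with an admissible mass level requires the least-energy/least-action correspondence of \cite{dovetta2021} together with surjectivity of $\omega\mapsto M_g(\omega)$ onto the correct mass range --- a fact the paper itself only observes numerically, in \cref{sec:action_vs_energy} and for a different potential. Moreover, Cazenave--Lions yields stability of the \emph{set} of minimizers; to conclude orbital stability of the individual standing wave you still need the uniqueness result of \cite{fukaya2021}. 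Second, for part ii) the endpoint behaviour only forces an odd number of sign changes of $M_g'$, not unimodality, so uniqueness of $\omega_c$ remains entirely open even granting everything else. None of this is a defect of your write-up --- it is precisely why the statement is a conjecture --- but your argument, like the paper's numerics, establishes strictly less than the statement claims.
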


	\begin{figure}[H]
		\centering
		\vspace{-0.35cm}
		\subfloat{\includegraphics[width=0.475\textwidth]{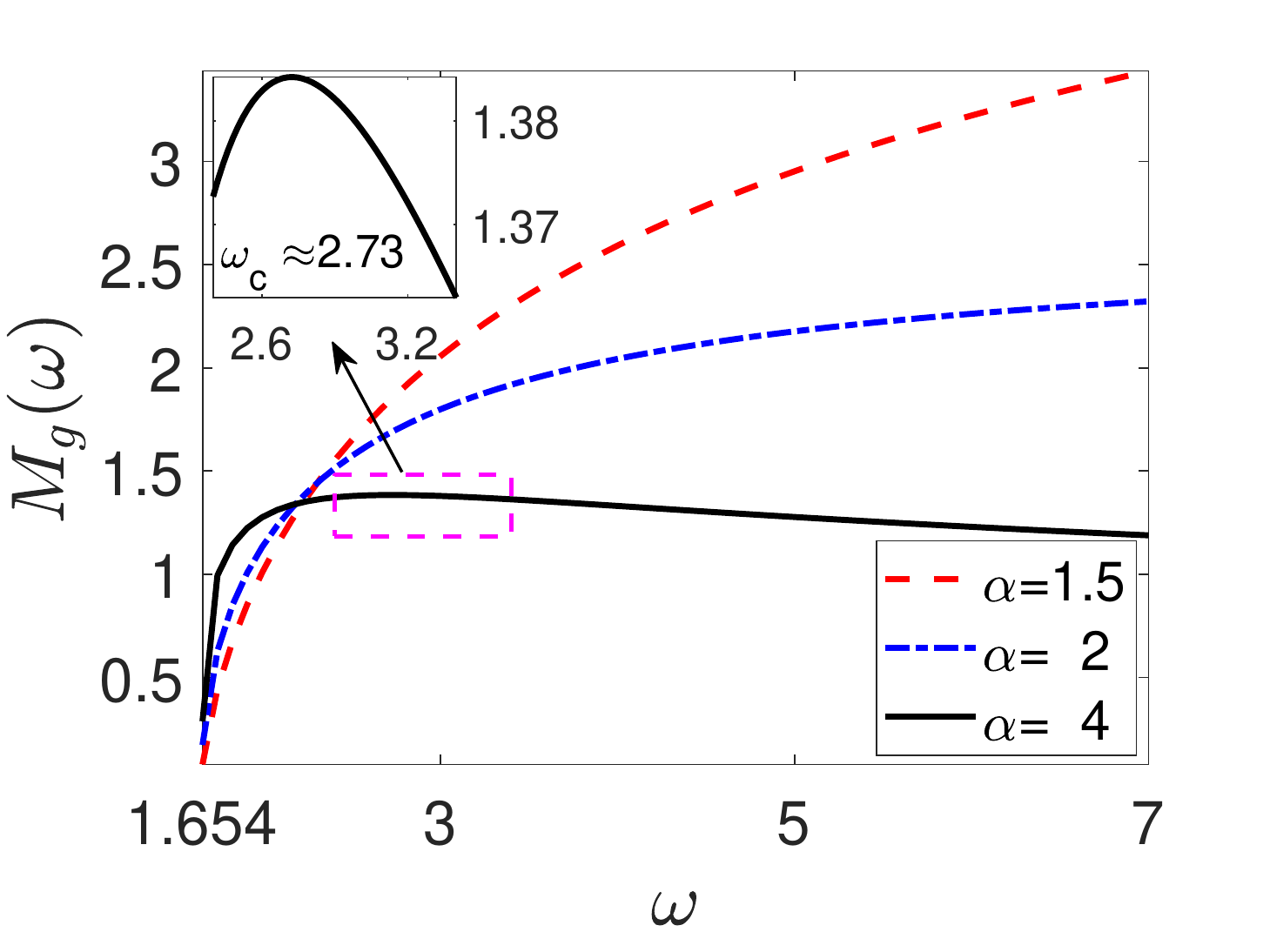}}
		\subfloat{\includegraphics[width=0.475\textwidth]{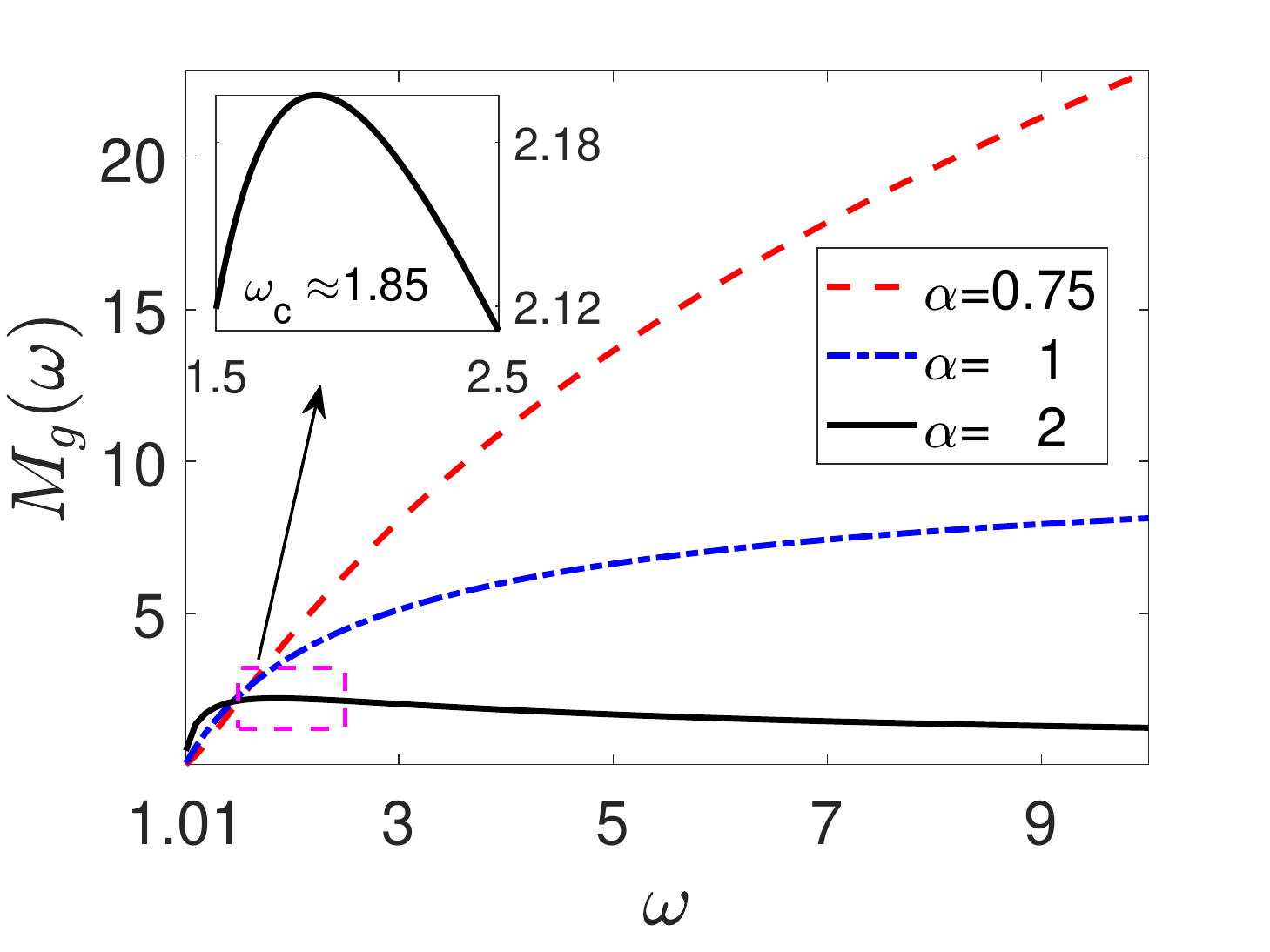}}\\
		\vspace{-0.35cm}
		\subfloat{\includegraphics[width=0.475\textwidth]{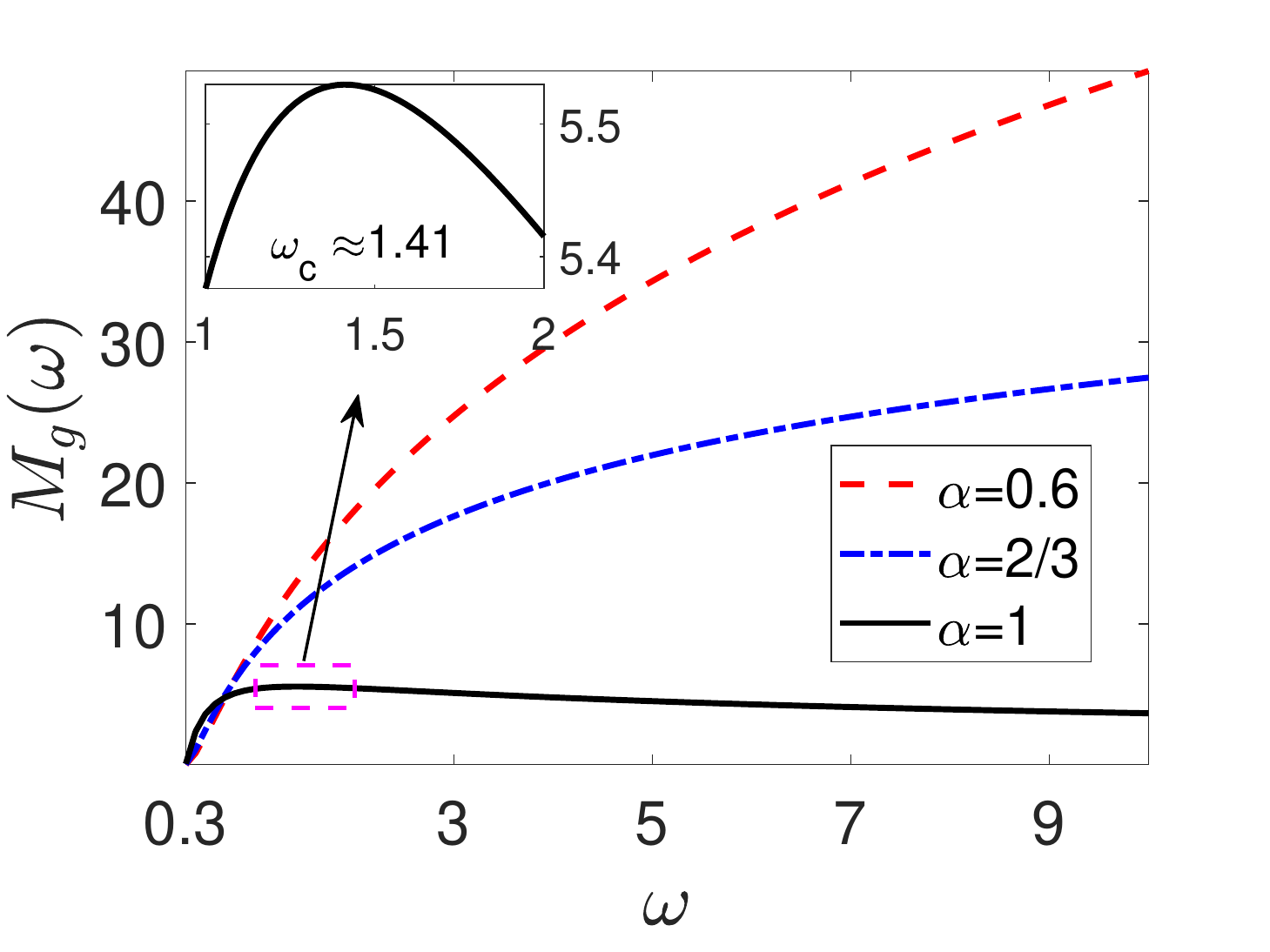}}
		\vspace{-0.35cm}
		\caption{$ M_g(\omega) $ as a function of $ \omega $ with $ V $ being the inverse power potentials in \cref{inver_power_potential_in_example} for $ d=1 $ (top left), $ d=2 $ (top right) and $ d=3 $ (bottom) }
		\label{inverse_power_stability}
	\end{figure}
	
	\section{Conclusion}
	In this paper, we study the normalized gradient flow method for computing the least action ground state of nonlinear Sch\"odinger equation with potentials. We present a continuous normalized gradient flow (CNGF) and prove that it preserves $ I_\omega $ and diminishes the action, which gives a mathematical justification of the gradient flow with discrete normalization (GFDN). Then we further discretize the GFDN in time with the backward-forward Euler method and obtain the GFDN-BF scheme. For this scheme, only a linear elliptic equation with constant coefficient (remaining unchanged) need to be solved at each step and thus it is very efficient coupled with finite difference (FD), finite element (FE) or sine pseudospectral (SP) discretization. Moreover, we show that the GFDN-BF scheme is positivity preserving and unconditionally action diminishing. We also compare the GFDN-BF with some other schemes modified from corresponding schemes used when computing the least energy ground states and find that the GFDN-BF scheme is the best. Finally, we show extensive numerical results of least action ground states with different potentials in 1D, 2D or 3D, which also lead to a few conjectures for some interesting problems in mathematics and physics. \\
	
	\noindent{\bf Acknowledgments} The author is grateful to Prof. Weizhu Bao for fruitful discussions and instructions.\\
	

\begin{thebibliography}{99}
		\providecommand{\natexlab}[1]{#1}
		\providecommand{\url}[1]{\texttt{#1}}
		\providecommand{\urlprefix}{URL }
		\expandafter\ifx\csname urlstyle\endcsname\relax
		\providecommand{\doi}[1]{doi:\discretionary{}{}{}#1}\else
		\providecommand{\doi}[1]{doi:\discretionary{}{}{}\begingroup
			\urlstyle{rm}\url{#1}\endgroup}\fi
		\providecommand{\bibinfo}[2]{#2}
		
		\bibitem[{Adami et~al.(2021)Adami, Boni, Carlone, and Tentarelli}]{adami2021}
		\bibinfo{author}{R.~Adami}, \bibinfo{author}{F.~Boni},
		\bibinfo{author}{R.~Carlone}, \bibinfo{author}{L.~Tentarelli},
		\bibinfo{title}{Ground states for the planar NLSE with a point defect as
			minimizers of the constrained energy}, \bibinfo{year}{2021}, \bibinfo{arxiv}{arXiv:2109.09482}.
		
		\bibitem[{Antoine and Duboscq(2014)}]{xavier2014}
		\bibinfo{author}{X.~Antoine}, \bibinfo{author}{R.~Duboscq},
		\bibinfo{title}{Robust and efficient preconditioned {K}rylov spectral solvers
			for computing the ground states of fast rotating and strongly interacting
			{B}ose-{E}instein condensates}, \bibinfo{journal}{J. Comput. Phys.}
		\bibinfo{volume}{258} (\bibinfo{year}{2014}) \bibinfo{pages}{509--523}.
		
		\bibitem[{Antoine et~al.(2017)Antoine, Levitt, and Tang}]{xavier2017}
		\bibinfo{author}{X.~Antoine}, \bibinfo{author}{A.~Levitt},
		\bibinfo{author}{Q.~Tang}, \bibinfo{title}{Efficient spectral computation of
			the stationary states of rotating {B}ose-{E}instein condensates by
			preconditioned nonlinear conjugate gradient methods}, \bibinfo{journal}{J.
			Comput. Phys.} \bibinfo{volume}{343} (\bibinfo{year}{2017})
		\bibinfo{pages}{92--109}.
		
		\bibitem[{Antoine et~al.(2018)Antoine, Tang, and Zhang}]{xavier2018}
		\bibinfo{author}{X.~Antoine}, \bibinfo{author}{Q.~Tang},
		\bibinfo{author}{Y.~Zhang}, \bibinfo{title}{A preconditioned conjugated
			gradient method for computing ground states of rotating dipolar
			{B}ose-{E}instein condensates via kernel truncation method for dipole-dipole
			interaction evaluation}, \bibinfo{journal}{Commun. Comput. Phys.}
		\bibinfo{volume}{24} (\bibinfo{year}{2018}) \bibinfo{pages}{966--988}.
		
		\bibitem[{Bao and Cai(2013)}]{bao2013}
		\bibinfo{author}{W.~Bao}, \bibinfo{author}{Y.~Cai},
		\bibinfo{title}{Mathematical theory and numerical methods for
			{B}ose-{E}instein condensation}, \bibinfo{journal}{Kinet. Relat. Models}
		\bibinfo{volume}{6} (\bibinfo{year}{2013}) \bibinfo{pages}{1--135}.
		
		\bibitem[{Bao and Cai(2015)}]{bao2015}
		\bibinfo{author}{W.~Bao}, \bibinfo{author}{Y.~Cai}, \bibinfo{title}{Ground
			states and dynamics of spin-orbit-coupled {B}ose-{E}instein condensates},
		\bibinfo{journal}{SIAM J. Appl. Math.} \bibinfo{volume}{75}
		(\bibinfo{year}{2015}) \bibinfo{pages}{492--517}.
		
		\bibitem[{Bao and Cai(2018)}]{bao2018}
		\bibinfo{author}{W.~Bao}, \bibinfo{author}{Y.~Cai},
		\bibinfo{title}{Mathematical models and numerical methods for spinor
			{B}ose-{E}instein condensates}, \bibinfo{journal}{Commun. Comput. Phys.}
		\bibinfo{volume}{24} (\bibinfo{year}{2018}) \bibinfo{pages}{899--965}.
		
		\bibitem[{Bao et~al.(2006)Bao, Chern, and Lim}]{bao2006}
		\bibinfo{author}{W.~Bao}, \bibinfo{author}{I.-L. Chern}, \bibinfo{author}{F.~Y.
			Lim}, \bibinfo{title}{Efficient and spectrally accurate numerical methods for
			computing ground and first excited states in {B}ose-{E}instein condensates},
		\bibinfo{journal}{J. Comput. Phys.} \bibinfo{volume}{219}
		(\bibinfo{year}{2006}) \bibinfo{pages}{836--854}.
		
		\bibitem[{Bao and Du(2004)}]{bao2004}
		\bibinfo{author}{W.~Bao}, \bibinfo{author}{Q.~Du}, \bibinfo{title}{Computing
			the ground state solution of {B}ose-{E}instein condensates by a normalized
			gradient flow}, \bibinfo{journal}{SIAM J. Sci. Comput.} \bibinfo{volume}{25}
		(\bibinfo{year}{2004}) \bibinfo{pages}{1674--1697}.
		
		\bibitem[{Bao and Ruan(2019)}]{bao2019}
		\bibinfo{author}{W.~Bao}, \bibinfo{author}{X.~Ruan}, \bibinfo{title}{Computing
			ground states of {B}ose-{E}instein condensates with higher order interaction
			via a regularized density function formulation}, \bibinfo{journal}{SIAM J.
			Sci. Comput.} \bibinfo{volume}{41} (\bibinfo{year}{2019})
		\bibinfo{pages}{B1284--B1309}.
		
		\bibitem[{Cai and Liu(2021)}]{cai2021}
		\bibinfo{author}{Y.~Cai}, \bibinfo{author}{W.~Liu}, \bibinfo{title}{Efficient
			and accurate gradient flow methods for computing ground states of spinor
			{B}ose-{E}instein condensates}, \bibinfo{journal}{J. Comput. Phys.}
		\bibinfo{volume}{433} (\bibinfo{year}{2021}) \bibinfo{pages}{Paper No.
			110183, 22}.
		
		\bibitem[{Carles et~al.(2021)Carles, Klein, and Sparber}]{carles2021}
		\bibinfo{author}{R.~Carles}, \bibinfo{author}{C.~Klein},
		\bibinfo{author}{C.~Sparber}, \bibinfo{title}{{On ground state (in-)stability
				in multi-dimensional cubic-quintic Schr{\"o}dinger equations}},
		\bibinfo{note}{22 pages}, \bibinfo{year}{2021}.
		
		\bibitem[{Cazenave(2003)}]{cazenave2003}
		\bibinfo{author}{T.~Cazenave}, \bibinfo{title}{Semilinear {S}chr\"{o}dinger
			equations}, vol.~\bibinfo{volume}{10} of \bibinfo{series}{Courant
			Lecture Notes in Mathematics}, \bibinfo{publisher}{New York University,
			Courant Institute of Mathematical Sciences, New York; American Mathematical
			Society, Providence, RI}, \bibinfo{year}{2003}.
		
		\bibitem[{Cazenave and Lions(1982)}]{cazenave_lions}
		\bibinfo{author}{T.~Cazenave}, \bibinfo{author}{P.-L. Lions},
		\bibinfo{title}{Orbital stability of standing waves for some nonlinear
			{S}chr\"{o}dinger equations}, \bibinfo{journal}{Comm. Math. Phys.}
		\bibinfo{volume}{85} (\bibinfo{year}{1982}) \bibinfo{pages}{549--561}.
		
		\bibitem[{Danaila and Kazemi(2010)}]{ionut2010}
		\bibinfo{author}{I.~Danaila}, \bibinfo{author}{P.~Kazemi}, \bibinfo{title}{A
			new {S}obolev gradient method for direct minimization of the
			{G}ross-{P}itaevskii energy with rotation}, \bibinfo{journal}{SIAM J. Sci.
			Comput.} \bibinfo{volume}{32} (\bibinfo{year}{2010})
		\bibinfo{pages}{2447--2467}.
		
		\bibitem[{Danaila and Protas(2017)}]{ionut2017}
		\bibinfo{author}{I.~Danaila}, \bibinfo{author}{B.~Protas},
		\bibinfo{title}{Computation of ground states of the {G}ross-{P}itaevskii
			functional via {R}iemannian optimization}, \bibinfo{journal}{SIAM J. Sci.
			Comput.} \bibinfo{volume}{39} (\bibinfo{year}{2017})
		\bibinfo{pages}{B1102--B1129}.
		
		\bibitem[{Dinh(2020)}]{dinh2020}
		\bibinfo{author}{V.~D. Dinh}, \bibinfo{title}{On nonlinear Schr\"odinger
			equations with attractive inverse-power potentials}, \bibinfo{year}{2020}, \bibinfo{arxiv}{arXiv:1903.04636}.
		
		\bibitem[{Dovetta et~al.(2021)Dovetta, Serra, and Tilli}]{dovetta2021}
		\bibinfo{author}{S.~Dovetta}, \bibinfo{author}{E.~Serra},
		\bibinfo{author}{P.~Tilli}, \bibinfo{title}{Action versus energy ground
			states in nonlinear Schr\"odinger equations}, \bibinfo{year}{2021}, \bibinfo{arxiv}{arXiv:2107.08655}.
		
		\bibitem[{Faou and J\'{e}z\'{e}quel(2018)}]{faou2018}
		\bibinfo{author}{E.~Faou}, \bibinfo{author}{T.~J\'{e}z\'{e}quel},
		\bibinfo{title}{Convergence of a normalized gradient algorithm for computing
			ground states}, \bibinfo{journal}{IMA J. Numer. Anal.} \bibinfo{volume}{38}
		(\bibinfo{year}{2018}) \bibinfo{pages}{360--376}.
		
		\bibitem[{Fukaya(2021)}]{fukaya2021}
		\bibinfo{author}{N.~Fukaya}, \bibinfo{title}{Uniqueness and nondegeneracy of
			ground states for nonlinear {S}chr\"{o}dinger equations with attractive
			inverse-power potential}, \bibinfo{journal}{Commun. Pure Appl. Anal.}
		\bibinfo{volume}{20} (\bibinfo{year}{2021}) \bibinfo{pages}{121--143}.
		
		\bibitem[{Fukaya and Ohta(2019)}]{fukaya2019}
		\bibinfo{author}{N.~Fukaya}, \bibinfo{author}{M.~Ohta}, \bibinfo{title}{Strong
			instability of standing waves for nonlinear {S}chr\"{o}dinger equations with
			attractive inverse power potential}, \bibinfo{journal}{Osaka J. Math.}
		\bibinfo{volume}{56} (\bibinfo{year}{2019}) \bibinfo{pages}{713--726}.
		
		\bibitem[{Fukuizumi and Ohta(2003{\natexlab{a}})}]{fukuizumi_unstable}
		\bibinfo{author}{R.~Fukuizumi}, \bibinfo{author}{M.~Ohta},
		\bibinfo{title}{{Instability of standing waves for nonlinear Schrödinger
				equations with potentials}}, \bibinfo{journal}{Differential and Integral
			Equations} \bibinfo{volume}{16} (\bibinfo{year}{2003}{\natexlab{a}})
		\bibinfo{pages}{691 -- 706}.
		
		\bibitem[{Fukuizumi and Ohta(2003{\natexlab{b}})}]{fukuizumi2003}
		\bibinfo{author}{R.~Fukuizumi}, \bibinfo{author}{M.~Ohta},
		\bibinfo{title}{{Stability of standing waves for nonlinear Schrödinger
				equations with potentials}}, \bibinfo{journal}{Differential and Integral
			Equations} \bibinfo{volume}{16} (\bibinfo{year}{2003}{\natexlab{b}})
		\bibinfo{pages}{111 -- 128}.
		
		\bibitem[{Fukuizumi et~al.(2008)Fukuizumi, Ohta, and Ozawa}]{fukuizumi2008}
		\bibinfo{author}{R.~Fukuizumi}, \bibinfo{author}{M.~Ohta},
		\bibinfo{author}{T.~Ozawa}, \bibinfo{title}{Nonlinear {S}chr\"{o}dinger
			equation with a point defect}, \bibinfo{journal}{Ann. Inst. H. Poincar\'{e}
			Anal. Non Lin\'{e}aire} \bibinfo{volume}{25} (\bibinfo{year}{2008})
		\bibinfo{pages}{837--845}.
	
		\bibitem[{Gilbarg and Trudinger(2001)}]{elliptic}
		\bibinfo{author}{D.~Gilbarg}, \bibinfo{author}{N.~S. Trudinger},
		\bibinfo{title}{Elliptic partial differential equations of second order},
		Classics in Mathematics, \bibinfo{publisher}{Springer-Verlag, Berlin}, 
		\bibinfo{note}{reprint of the 1998 edition},
		\bibinfo{year}{2001}.
		
		\bibitem[{Grillakis et~al.(1987)Grillakis, Shatah, and Strauss}]{GSS1987}
		\bibinfo{author}{M.~Grillakis}, \bibinfo{author}{J.~Shatah},
		\bibinfo{author}{W.~Strauss}, \bibinfo{title}{Stability theory of solitary
			waves in the presence of symmetry. {I}}, \bibinfo{journal}{J. Funct. Anal.}
		\bibinfo{volume}{74} (\bibinfo{year}{1987}) \bibinfo{pages}{160--197}.
		
		\bibitem[{Henning and Peterseim(2020)}]{henning2020}
		\bibinfo{author}{P.~Henning}, \bibinfo{author}{D.~Peterseim},
		\bibinfo{title}{Sobolev gradient flow for the {G}ross-{P}itaevskii eigenvalue
			problem: global convergence and computational efficiency},
		\bibinfo{journal}{SIAM J. Numer. Anal.} \bibinfo{volume}{58}
		(\bibinfo{year}{2020}) \bibinfo{pages}{1744--1772}.
		
		\bibitem[{Ikoma and Miyamoto(2020)}]{ikoma2020}
		\bibinfo{author}{N.~Ikoma}, \bibinfo{author}{Y.~Miyamoto},
		\bibinfo{title}{Stable standing waves of nonlinear {S}chr\"{o}dinger
			equations with potentials and general nonlinearities},
		\bibinfo{journal}{Calc. Var. Partial Differential Equations}
		\bibinfo{volume}{59} (\bibinfo{year}{2020}) \bibinfo{pages}{Paper No. 48, 20
			pp,}.
		
		\bibitem[{Jeanjean and Lu(2021)}]{jeanjean2021}
		\bibinfo{author}{L.~Jeanjean}, \bibinfo{author}{S.-S. Lu}, \bibinfo{title}{On
			global minimizers for a mass constrained problem}, \bibinfo{year}{2021}, \bibinfo{arxiv}{arXiv:2108.04142}.
		
		\bibitem[{Kirr et~al.(2008)Kirr, Kevrekidis, Shlizerman, and
			Weinstein}]{sym_breaking}
		\bibinfo{author}{E.~W. Kirr}, \bibinfo{author}{P.~G. Kevrekidis},
		\bibinfo{author}{E.~Shlizerman}, \bibinfo{author}{M.~I. Weinstein},
		\bibinfo{title}{Symmetry-breaking bifurcation in nonlinear
			{S}chr\"{o}dinger/{G}ross-{P}itaevskii equations}, \bibinfo{journal}{SIAM J.
			Math. Anal.} \bibinfo{volume}{40} (\bibinfo{year}{2008})
		\bibinfo{pages}{566--604}.
		
		\bibitem[{Le~Coz(2009)}]{coz2009}
		\bibinfo{author}{S.~Le~Coz}, \bibinfo{title}{Standing waves in nonlinear
			{S}chr\"{o}dinger equations}, in: \bibinfo{booktitle}{Analytical and
			numerical aspects of partial differential equations},
		\bibinfo{publisher}{Walter de Gruyter, Berlin}, \bibinfo{pages}{151--192},
		\bibinfo{year}{2009}.
		
		\bibitem[{Li and Zhao(2020)}]{li2020}
		\bibinfo{author}{X.~Li}, \bibinfo{author}{J.~Zhao}, \bibinfo{title}{Orbital
			stability of standing waves for {S}chr\"{o}dinger type equations with slowly
			decaying linear potential}, \bibinfo{journal}{Comput. Math. Appl.}
		\bibinfo{volume}{79} (\bibinfo{year}{2020}) \bibinfo{pages}{303--316}.
		
		\bibitem[{Lieb and Loss(2001)}]{lieb2001}
		\bibinfo{author}{E.~H. Lieb}, \bibinfo{author}{M.~Loss},
		\bibinfo{title}{Analysis}, vol.~\bibinfo{volume}{14} of
		\bibinfo{series}{Graduate Studies in Mathematics},
		\bibinfo{publisher}{American Mathematical Society, Providence, RI},
		\bibinfo{edition}{second} edn., \bibinfo{year}{2001}.
		
		\bibitem[{Liu and Cai(2021)}]{liu2021}
		\bibinfo{author}{W.~Liu}, \bibinfo{author}{Y.~Cai}, \bibinfo{title}{Normalized
			gradient flow with {L}agrange multiplier for computing ground states of
			{B}ose-{E}instein condensates}, \bibinfo{journal}{SIAM J. Sci. Comput.}
		\bibinfo{volume}{43} (\bibinfo{year}{2021}) \bibinfo{pages}{B219--B242}.

		\bibitem[{Liu et~al.(2022)Liu, Yuan, and Zhao}]{liu2022}
		\bibinfo{author}{W.~Liu}, \bibinfo{author}{Y.~Yuan}, \bibinfo{author}{X.~Zhao},
		\bibinfo{title}{Computing the action ground state for the rotating nonlinear
			Schr\"{o}dinger equation}, \bibinfo{year}{2022}, \bibinfo{arxiv}{arXiv:2203.06383}.

		\bibitem[{Sun and Zhou(2017)}]{zhou2017}
		\bibinfo{author}{J.~Sun}, \bibinfo{author}{A.~Zhou}, \bibinfo{title}{Finite
			element methods for eigenvalue problems}, Monographs and Research Notes in
		Mathematics, \bibinfo{publisher}{CRC Press, Boca Raton, FL},
		\bibinfo{year}{2017}.
		
		\bibitem[{Weinstein(8283)}]{weinstein1982}
		\bibinfo{author}{M.~I. Weinstein}, \bibinfo{title}{Nonlinear {S}chr\"{o}dinger
			equations and sharp interpolation estimates}, \bibinfo{journal}{Comm. Math.
			Phys.} \bibinfo{volume}{87} (\bibinfo{year}{1982/83})
		\bibinfo{pages}{567--576}.
		
		\bibitem[{Wu et~al.(2017)Wu, Wen, and Bao}]{wen2017}
		\bibinfo{author}{X.~Wu}, \bibinfo{author}{Z.~Wen}, \bibinfo{author}{W.~Bao},
		\bibinfo{title}{A regularized {N}ewton method for computing ground states of
			{B}ose-{E}instein condensates}, \bibinfo{journal}{J. Sci. Comput.}
		\bibinfo{volume}{73} (\bibinfo{year}{2017}) \bibinfo{pages}{303--329}.
		
	\end{thebibliography}

\end{document}